\def\bb#1\eb{\textcolor{blue}{#1}}
\def\br#1\er{\textcolor{red}{#1}}
\def\bv#1\ev{\textcolor{green}{#1}}
\def\bl#1\el{\textcolor{violet}{#1}}
\newtheorem{thm}{Theorem}[section]
\newtheorem{prop}[thm]{Proposition}
\newtheorem{lemma}[thm]{Lemma}
\newtheorem{cor}[thm]{Corollary}
\theoremstyle{definition}
\newtheorem{defi}[thm]{Definition}
\newtheorem{notation}[thm]{Notation}
\newtheorem{exe}[thm]{Example}
\newtheorem{rem}[thm]{Remark}
\newcommand{\C}{\mathcal{C}}
\newcommand{\T}{\mathcal{T}}
\newcommand{\leftindex}{{\textcolor{white} ,}}
\title[Fermat's principle and Snell's law for cone structures]{Generalized Fermat's principle and Snell's law \\ for cone structures and applications}
\author[M.~\'{A}.~Javaloyes]{Miguel \'Angel Javaloyes}\address{Departamento de Matem\'{a}ticas, \hfill\break\indent Universidad de Murcia, \hfill\break\indent Campus de Espinardo,\hfill\break\indent 30100 Espinardo, Murcia, Spain.} \email{majava@um.es}
\author[S.~Markvorsen]{Steen Markvorsen}\address{DTU Compute, \hfill\break\indent Section for Mathematics, \hfill\break\indent Technical University of Denmark, \hfill\break\indent DK-2800 Kgs. Lyngby, Denmark.}\email{stema@dtu.dk}
\author[E.~Pend\'{a}s-Recondo]{Enrique Pend\'{a}s-Recondo}\address{Departamento de Matem\'{a}ticas, \hfill\break\indent Facultad de Ciencias, \hfill\break\indent Universidad de Oviedo, \hfill\break\indent 33007 Oviedo, Spain.}\email{pendasenrique@uniovi.es}
\author[M.~S\'{a}nchez]{Miguel S\'{a}nchez}\address{Departamento de Geometr\'{\i}a y Topolog\'{\i}a, \hfill\break\indent Facultad de Ciencias \& IMAG (Centro de Excelencia Mar\'{i}a de Maeztu), \hfill\break\indent Universidad de Granada, \hfill\break\indent 18071 Granada, Spain.}\email{sanchezm@ugr.es}
\begin{document}

\begin{abstract}
Fermat's principle is fully generalized to the case where a smooth interface separates two cone structures---Lorentz-Finsler lightcones---representing wave propagation in a potentially inhomogeneous, anisotropic, time-dependent and discontinuous medium. The interface, wave source and receiver are assumed to be a hypersurface, a submanifold and a curve in the spacetime, respectively, of any causal character. For a trajectory to fullfil Fermat's principle---i.e., to be a critical point of the arrival time functional---its direction must change at the interface, obeying a precise condition that generalizes Snell's law of refraction when the wave crosses the interface, or the law of reflection when it remains in a single medium. Both laws are analyzed in detail to establish the conditions ensuring the existence and uniqueness of refracted and reflected trajectories, and to determine whether they actually minimize the arrival time. Applications to Zermelo's navigation problem and the determination of geodesics in discretized spacetimes are also emphasized.
\vspace{10mm}

\noindent {\em Keywords}: Fermat's principle, Snell's law, Cone structures, Cone geodesics, Lorentz-Finsler metrics, Finsler spacetimes, Anisotropic wave propagation, Zermelo's navigation problem, Discretized spacetimes.
\end{abstract}

\maketitle

\tableofcontents

\section{Introduction}
\label{sec:intro}
This work is devoted to the study, under very general assumptions, of two fundamental physical principles that are naturally intertwined: Fermat's principle and Snell's law of refraction. The generality of our framework is expected to support a discretization approach to the problem of wave propagation, based on the cones determined by the maximum propagation speed at each point and direction. This applies both to the propagation of non-relativistic waves in arbitrary (inhomogeneous, anisotropic and time-dependent) media and to the discretization of relativistic spacetimes, where the generalized Snell’s formula makes it possible to track lightlike geodesics.

\subsection{Classical Fermat's principle and its generalizations}
The classical Fermat's principle states that the path taken by a light ray between two fixed points in space is the one that makes the traveltime critical. This follows from the natural ability of waves---according to Huygens' principle---to ``test'' alternative paths: a wavefront spreading from a given point sweeps all possible wave paths originating from that point. Thus, this variational principle is inherently general---rooted in the very nature of waves---but requires a precise mathematical formulation in each specific context to determine these ``critical paths'', which can then be taken as the actual light trajectories in that setting.

When the medium of propagation is inhomogeneous and time-dependent (but infinitesimally isotropic), the natural setting to work in is a Lorentzian manifold $ (Q,g) $, even in the classical (Newtonian) case. Here, the variational problem takes the following form, first proposed in \cite{K90}: among all possible light paths (lightlike curves) from an event (point) $ p $ to an observer (timelike curve) $ \alpha $, we seek those which make the arrival time critical, where this ``arrival time'' is measured by an arbitrary parametrization of $ \alpha $. These critical points turn out to be the lightlike pregeodesics of $ g $ (see \cite{P90,P90b}).

Two main extensions of this relativistic formulation of Fermat’s principle have been explored. The first natural generalization involves relaxing some of the constraints on the initial point (source) and arrival curve (receiver):
\begin{itemize}
\item The case where the source is a spacelike submanifold $ P $ and the receiver is a timelike submanifold $ B $ (both of arbitrary codimension) is addressed in \cite{PP98}. In a natural way, $ P $ imposes the condition that lightlike geodesics must be orthogonal to it in order to remain critical. However, the introduction of $ B $ is more subtle, as it requires to redefine the arrival time functional. In \cite{PP98}, this is accomplished by introducing a temporal function $ t_B $ on $ B $. Consequently, upon arrival, critical lightlike geodesics must project onto $ B $ in the direction of the gradient of $ t_B $.

\item The case where the arrival curve $ \alpha $ can have any causal character is studied in \cite[\S~7]{CJS14}. Interestingly, Fermat's principle remains the same here as in the case with a timelike $ \alpha $, provided that we consider only lightlike curves non-orthogonal to $ \alpha $.
\end{itemize}

The other natural extension is to consider an anisotropic medium of propagation. In this case, the Lorentzian framework must be generalized to include anisotropic lightcones, allowing the speed of light to depend on the spatial direction. This leads to the notion of Lorentz-Finsler metrics, Finsler spacetimes and, more fundamentally, cone structures, which focus solely on the geometry of lightcones without requiring an explicit metric (see \cite{JS20}). The extension of Fermat's principle to Finsler spacetimes was first studied in \cite{P06} (for light rays traveling from $ p $ to a timelike $ \alpha $) and, formally, it turns out to be the same as in the Lorentzian setting: the critical points of the arrival time functional are precisely the lightlike pregeodesics of the corresponding Lorentz-Finsler metric.

\subsection{Classical Snell's law and its generalizations}
Snell's law follows naturally from Fermat's principle when light---or any other wave---travels between two different media.\footnote{In fact, Fermat's principle was originally proposed as an explanation of the refraction phenomenon (see \cite[\S~2]{mahoney1994}).} In its classical formulation, one has two homogeneous, isotropic and time-independent media in the Euclidean space, separated by an interface. Due to the discontinuity in the speed of light, a light ray---otherwise a straight line within each medium under these conditions---must change direction at the interface in order to minimize the traveltime between the endpoints. This ``break'' is described by the classical Snell formula, which, in dimension 2, relates the angles of incidence and refraction to the refractive indices of the media (the ratio of the speed of light in vacuum to its speed in the medium).

Unlike Fermat's principle, Snell's law has always been studied from a purely spatial (i.e., stationary) perspective (see e.g. \cite{GG03}). An interesting generalization arises when one considers (in the 2-dimensional Euclidean space) an anisotropic medium that is homogeneous in one direction. This symmetry provides---according to Noether's theorem---a constant of motion along the ray paths, which can be used as an equivalent to Snell's law. This case has been particularly studied in the context of seismic waves (see \cite{BS02,SW99}).

More recently, in \cite{MP23}, Snell's law and the analogous law of reflection have been fully generalized to arbitrary inhomogeneous and anisotropic (static) media---allowing for an arbitrary background manifold of any dimension and any shape of the (smooth) interface. The key idea is that the wave velocity in space induces a Finsler metric, with respect to which the length of a curve coincides with the time required to travel it at the given wave velocity. This leads to yet another generalization of Fermat's principle: a spatial curve traveling between two Finsler manifolds is a critical point of the arrival time functional (where this ``arrival time'' now is measured as a Finsler length) if and only if it is a Finsler pregeodesic in each media and satisfies Snell's law at the interface.

However, to the best of our knowledge, Snell’s law has never been explored from a spacetime viewpoint, where a possible time dependence---either in the wave velocity or in the interface itself---and a geometric evolutionary framework can be obtained and applied to both the Newtonian and relativistic settings.

\subsection{Our approach and main results}
We aim to unify all previous generalizations within the most general framework (described in \S~\ref{sec:setting}): we consider an arbitrary background manifold $ Q $ of any dimension and a smooth interface $ \eta $ of any causal character, which divides the manifold into two regions $ Q_1,Q_2 \subset Q $. These regions are each endowed solely with a cone structure $ \C^1,\C^2 $, which do not necessarily match at $ \eta $. We will show that cone structures provide the necessary notions of causality, geodesics and orthogonality to derive all the results, although for explicit computations, it will also be useful to work with compatible Lorentz-Finsler metrics $ L_1,L_2 $. We consider a submanifold $ P \subset Q_1 $ of arbitrary codimension as the source, and an embedded curve $ \alpha $ in $ Q_2 $ as the receiver, whose parametrization measures the ``arrival time''. Both $ P $ and $ \alpha $ can have, a priori, any causal character, although we will find certain restrictions on them a posteriori. Moreover, the extension of $ \alpha $ to a timelike submanifold $ B $ (as in \cite{PP98}) can also be incorporated within our general framework (see Appendix~\ref{app:receiver}).

The main result (Theorem~\ref{thm:snell}) fully generalizes Fermat's principle and Snell's law to this setting. It states that, under some restrictions on the test curves (see Remark~\ref{rem:restrictions}), a lightlike curve $ \gamma $ is a critical point of the arrival time functional if and only if it is orthogonal to $ P $, it is a cone geodesic (i.e., a locally horismotic curve) in each cone structure, and it satisfies the generalized Snell's law at the interface (Equation~\eqref{eq:snell}). In this generalized version, the law admits the following geometric interpretation:
\begin{quote}
Snell's law states that, in order to be a critical point of the arrival time functional, the incident direction $ \dot{\gamma}(\tau^-) $ and the refracted direction $ \dot{\gamma}(\tau^+) $ of a trajectory $ \gamma $ crossing the interface $\eta$ at $ \gamma(\tau) $ must satisfy
\begin{equation*}
\dot{\gamma}(\tau^-)^{\perp_{\C^1}} \cap T_{\gamma(\tau)}\eta \subset \dot{\gamma}(\tau^+)^{\perp_{\C^2}} \cap T_{\gamma(\tau)}\eta,
\end{equation*}
where $ \dot{\gamma}(\tau^-)^{\perp_{\C^1}} $ and $ \dot{\gamma}(\tau^+)^{\perp_{\C^2}} $ are the hyperplanes tangent to the cone structures $ \C^1 $ and $ \C^2 $ along the incident and refracted directions, respectively, at $ \gamma(\tau) \in \eta $.
\end{quote}
Snell's formula applies whenever it is well defined, that is, when both an incident and a refracted trajectory actually exist. It can also be explicitly expressed in coordinates (Equation~\eqref{eq:snell_coord}) and, as expected, it reduces to the Finslerian version found in \cite{MP23} in the time-independent case (Equation~\eqref{eq:snell_natural}) and to the classical formula in the isotropic Euclidean space (see Example~\ref{ex:classical_laws}). The generalized law of reflection (Corollary~\ref{thm:reflection}) is also obtained as a direct consequence of Fermat's principle when the wave is forced to return to the first medium after hitting the interface---in fact, it will be seen as a particular case of Snell's law. Naturally, the relativistic and Finslerian versions of Fermat's principle are also recovered from this general formulation when both cone structures match at the interface.

Once Snell's law is established, the next goal is to study in detail the existence and uniqueness of its solutions. Since this law only operates at the level of the tangent space $ T_pQ $, where $ p \in \eta $ is the point where $ \gamma $ meets the interface, we can disregard the global trajectory, fix an incident direction and focus on the refracted directions we can obtain. Theorem~\ref{thm:existence_refraction} lists all possible scenarios, depending on the incident direction and the causal character of $ T_p\eta $ with respect to $ \C^2 $. The analogous result for the law of reflection is provided in Corollary~\ref{thm:existence_reflection}. In most natural situations, there is a unique refraction and reflection. However, this general framework allows for every other combination: reflection without refraction, refraction without reflection, cases where neither exists, or even situations where the refracted direction is not univocally determined. Some of these cases lead to particularly interesting phenomena, which are studied separately: the total reflection (see \S~\ref{subsec:total_reflection}) and the double refraction (see \S~\ref{subsec:double_refraction}).

Our final study focuses on identifying which solutions to Fermat’s principle are actual minimizers of the arrival time. This is captured in the notion of Snell cone geodesics, which extends the local horismoticity property of cone geodesics beyond the interface---namely, they can be regarded as locally time-minimizing curves among causal curves arriving at any (future-directed) timelike $\alpha$. In order to identify these curves, a detailed analysis is required, relying on a specific orientation that can be induced on the hyperplanes orthogonal to the incident, refracted and reflected directions. Based on whether these orientations match, Theorem~\ref{thm:orientation} characterizes when a refracted or reflected trajectory yields a Snell cone geodesic. This is a subtle question, because Fermat's principle and Snell's law (or the law of reflection) combine to determine the possible horismoticity of the curve. Consequently, the spacetime viewpoint is essential here, even when applied to the minimization of a Newtonian problem. In line with the classical intuition, reflected trajectories are not Snell cone geodesics unless they remain unbroken at the interface (Corollary~\ref{cor:cone_reflected}). In contrast, for refracted trajectories, both outcomes are possible, depending on the arrangement of the cones (as Figure~\ref{fig:broken_hyperplane} explicitly shows). This becomes particularly relevant in the double refraction phenomenon, where only one of the two refractions qualifies as a Snell cone geodesic (Proposition~\ref{prop:double_refraction}).

\subsection{Prospective applications}
The applications of the generalized Fermat’s principle and Snell’s law extend far beyond the determination of light rays in a (potentially inhomogeneous, anisotropic, time-dependent and discontinuous) optical medium or gravitational field. In fact, cone structures can describe the propagation of any wave or physical phenomenon governed by Huygens' principle---e.g., sound waves, seismic waves or the spread of wildfires (see \cite{JPS21})---as well as the travel of a moving object, such as in Zermelo’s navigation problem (see \cite[\S~6.3]{JS20}). In \S~\ref{sec:natural}, we present the natural framework for modeling these situations, along with the explicit form that Snell's law (Corollary~\ref{cor:snell_physical}) and the law of reflection (Corollary~\ref{cor:reflection_physical}) take in this setting. More concretely, these laws can be expressed in terms of the (Finslerian) angles of the incident, refracted and reflected directions with respect to the interface (Equations~\eqref{eq:snell_angles} and \eqref{eq:reflection_angles}). In the isotropic case, these directions and the Euclidean normal to the interface lie in the same plane (see Example~\ref{ex:classical_laws}), allowing us to recover the classical formulas in terms of the (Euclidean) angles to the normal (Equation~\eqref{eq:classical}).

Within this context, it is also worth pointing out the relevance of considering strongly convex cones more general than the (quadratic) relativistic ones. Indeed, the aformentioned modeling of anisotropic wave propagation in a classical setting naturally leads to the most general class of convex cones. In fact, if the pointwise indicatrix (the topological sphere of maximum velocities in each direction) were not convex, its convex hull would yield the same wavefronts (see \cite[Appendix~A.2]{JPS23}). Our approach, based on cones associated with a Lorentz-Finsler metric, imposes only the mild assumption of {\em strong convexity}, which ensures that the Chern anisotropic connection is well defined and that cone geodesics are unique in every direction (and smoothly dependent on the initial conditions). As a consequence, these (strongly convex) anisotropic cones, beyond being the natural objects in Finslerian extensions of relativity, would also be emerging objects in fundamental theories of the spacetime---particularly in approaches derived from quantum gravity, such as causal fermion systems (see~\cite{FK23}) or causal sets (see~\cite{S19}).

Additionally, the comprehensive study we will conduct on Snell's law, considering every possible scenario, can be directly applied to the problem of computing lightlike geodesics in discretized spacetimes (a natural issue in numerical relativity), as briefly described in \S~\ref{sec:discretization}. For our purposes, this discretization can be implemented by selecting a grid that partitions the background manifold into cells. A cone is assigned to each cell---so that the cone structure remains constant whithin cells---while the cell boundaries serve as interfaces that may have, in general, any causal character. Then, the problem of finding lightlike geodesics in this discretized spacetime leads directly to the general Snell problem of finding refracted trajectories, fully explored in this work.

\subsection{Outline of the paper}
This work is structured in the following sections:
\begin{itemize}
\item[\S~\ref{sec:preliminaries}:] We begin by introducing the notion of cone structures, along with the causality and cone geodesics they induce (\S~\ref{subsec:cones}), as well as their links to Lorentz-Finsler metrics and Finsler spacetimes (\S~\ref{subsec:finsler_metrics}). We also emphasize the concepts of orthogonality and transversality (\S~\ref{subsec:orthogonality}), and the definition of Lorentz-Finsler geodesics through the Chern anisotropic connection, highlighting their equivalence with cone geodesics (\S~\ref{subsec:chern}).
\item[\S~\ref{sec:setting}:] We establish the general framework of our study, along with the definition of the arrival time functional and the admissible variations we will consider.
\item[\S~\ref{sec:fermat}:] We develop the generalized version of Fermat's principle (Theorem~\ref{thm:snell}), which includes the condition that extends Snell's law of refraction (\S~\ref{sec:snell}) and the analogous law of reflection (\S~\ref{sec:reflection}).
\item[\S~\ref{sec:existence}:] After some preliminaries (\S~\ref{subsec:def_res}), we analyze Snell's law and the law of reflection in detail to determine under which conditions we can ensure the existence and uniqueness of refraction (\S~\ref{sec:refracted_dir}) and reflection (\S~\ref{sec:reflected_dir}). The particular case of the total reflection phenomenon is also discussed (\S~\ref{subsec:total_reflection}).
\item[\S~\ref{sec:cone_geodesics}:] We first generalize some causality notions (\S~\ref{subsec:gen_causality}) and then introduce a method, based on a specific orientation induced on the hyperplanes appearing in Snell’s law and the law of reflection, that allows us to determine whether a refracted or reflected trajectory yields a Snell cone geodesic, i.e. a (local) minimizer of the arrival time functional (\S~\ref{subsec:orientation}). This, in turn, makes it possible to compute causal futures in the general setting by taking Snell cone geodesics as their boundaries. Furthermore, it provides a clear interpretation of the double refraction phenomenon (\S~\ref{subsec:double_refraction}).
\item[\S~\ref{sec:applications}:] Finally, we apply the developed theory to a natural physical setting, deriving simplified versions of Snell's law and the law of reflection that reduce to the classical ones in the isotropic case (\S~\ref{sec:natural}). We then briefly outline an application to the study of discretized spacetimes (\S~\ref{sec:discretization}).
\end{itemize}

\section{Preliminaries on cone structures}
\label{sec:preliminaries}
In order to make this work as self-contained as possible, we summarize in this section the main definitions and notions regarding cone structures and their relation to Lorentz-Finsler metrics, following mainly \cite{JS20}. Throughout this work, $ Q $ will denote a smooth (i.e., $ \C^{\infty} $) manifold of dimension $ n+1 \geq 3 $, and $ TQ $ will be its tangent bundle. Also, every hypersurface or submanifold will be assumed smooth and embedded.

\subsection{Cone structures and causality}
\label{subsec:cones}
We start with the definition of cone structures, followed by the causality notions they induce.
\begin{defi}
\label{def:cone_structure}
A hypersurface $ \C $ of the slit tangent bundle $ TQ \setminus \bf{0} $ is a {\em cone structure} on $ Q $ if for each $ p \in Q $:
\begin{itemize}
\item[(i)] $ \C $ is transverse to the fibers of the tangent bundle, i.e. if $ v \in \C_p \coloneqq T_pQ \cap \C $, then $ T_v(T_pQ) + T_{(p,v)}\C = T_{(p,v)}(TQ) $.
\item[(ii)] $ \C_p $ is a {\em cone}, meaning that it satisfies the following properties:
\begin{itemize}
\item[$ \circ $] {\em Conic}: if $ v \in \C_p $, then $ \lbrace \lambda v: \lambda > 0 \rbrace \subset \C_p $.
\item[$ \circ $] {\em Salient}: if $ v \in \C_p $, then $ -v \notin \C_p $.
\item[$ \circ $] {\em Convex interior}: $ \C_p $ is the boundary in $ T_pQ \setminus \lbrace 0 \rbrace $ of an open subset $ A_p \subset T_pQ \setminus \lbrace 0 \rbrace $ (the $ \C_p $-interior) which is convex, i.e. for any $ v, w \in A_p $, the segment $ \lbrace \lambda v + (1-\lambda)w: 0 \leq \lambda \leq 1 \rbrace $ is included entirely in $ A_p $.
\item[$ \circ $] {\em (Non-radial) strong convexity}: the second fundamental form of $ \C_p $ as an affine hypersurface of $ T_pQ $ is positive semi-definite (with respect to an inner direction pointing to $ A_p $) and its radical at each $ v \in \C_p $ is spanned by the radial direction $ \lbrace \lambda v: \lambda > 0 \rbrace $.
\end{itemize}
\end{itemize}
Given a cone structure $ \C $ on $ Q $, we denote by $ A \coloneqq \cup_{p \in Q} A_p $ its {\em cone domain}.
\end{defi}

The transversality condition (i) is necessary to ensure that the cone $ \C_p $ is a hypersurface of $ T_pQ \setminus \{0\} $ varying smoothly with $ p \in Q $ (see \cite[Remark~2.8]{JS20}). On the other hand, the hypotheses in (ii) are enough to ensure the existence of a hyperplane $ H $ in each $ T_pQ $ intersecting transversely all the directions in $ \C_p $, so that $ H \cap \C_p \subset T_pQ $ is a compact strongly convex submanifold of dimension $ (n-1) $ (see \cite[Lemma~2.5]{JS20}).

\begin{defi}
\label{def:causality}
Let $ \C $ be a cone structure on $ Q $ with cone domain $ A $. We will denote by $ \overline{A} $ the closure of $ A $ in $ TQ \setminus \bf{0} $, and $ \overline{A}_p \coloneqq \overline{A} \cap T_pQ $.
\begin{itemize}
\item[(i)] A vector $ v \in T_pQ $ is
\begin{itemize}
\item[$ \circ $] {\em timelike} if $ v \in A_p $,
\item[$ \circ $] {\em lightlike} if $ v \in \C_p $,
\item[$ \circ $] {\em causal} if $ v \in \overline{A}_p $, and
\item[$ \circ $] {\em spacelike} if neither $ v $ nor $ -v $ is causal.
\end{itemize}
Note that $ \lambda v $ keeps the same causal character as $ v $, for any $ \lambda > 0 $.
\item[(ii)] A vector field $ V $ on $ Q $ is {\em timelike}, {\em lightlike}, {\em causal} or {\em spacelike} if $ V_p $ is so for all $ p \in Q $.
\item[(iii)] A piecewise smooth curve $ \gamma: I \rightarrow Q $ is {\em timelike}, {\em lightlike}, {\em causal} or {\em spacelike} if its tangent vector field $ \dot{\gamma} $ (or both $ \dot{\gamma}(t_0^+) $ and $ \dot{\gamma}(t_0^-) $ at any break $ t_0 \in I $) is so everywhere.
\item[(iv)] A linear subspace $ \Pi \subset T_pQ $ is
\begin{itemize}
\item[$ \circ $] {\em timelike} if there is at least one timelike vector $ v \in \Pi $,
\item[$ \circ $] {\em lightlike} if $ \Pi $ is tangent to the cone $ \C_p $ along one radial direction $ v \in \C_p $ (and only one, due to the non-radial strong convexity of the cone),
\item[$ \circ $] {\em causal} if it is timelike or lightlike, and
\item[$ \circ $] {\em spacelike} if every vector $ v \in \Pi $ is spacelike.
\end{itemize}
\item[(v)] A submanifold $ P \subset Q $ is {\em timelike}, {\em lightlike}, {\em causal} or {\em spacelike} if every tangent space $ T_pP \subset T_pQ $ is so.
\end{itemize}
When working with more than one cone structure, we will say that something is $ \C $-timelike, $ \C $-lightlike, $ \C $-causal or $ \C $-spacelike, making explicit reference to the cone structure we are using in each case. In addition, we define the following usual notions:
\begin{itemize}
\item {\em Chronological future}: $ I^+(p) \coloneqq \{q \in Q: \exists \text{ timelike curve from } p \text{ to } q \} $.
\item {\em Chronological past}: $ I^-(p) \coloneqq \{q \in Q: \exists \text{ timelike curve from } q \text{ to } p \} $.
\item {\em Causal future}: $ J^+(p) \coloneqq \{q \in Q: p = q \text{ or } \exists \text{ causal curve from } p \text{ to } q \} $.
\item {\em Causal past}: $ J^-(p) \coloneqq \{q \in Q: p = q \text{ or } \exists \text{ causal curve from } q \text{ to } p \} $.
\item {\em Horismotic relation}: Two points $ p,q \in Q $ are {\em horismotically related}, denoted $ p \rightarrow q $, if $ q \in J^+(p) \setminus I^+(p) $. When $ \Omega $ is an open subset of $ Q $, we denote by $ \rightarrow_{\Omega} $ the horismotic relation of the restricted cone structure $ \C|_\Omega $ on $ \Omega $. This means, in particular, that the chronological and causal futures and pasts must be computed using curves entirely contained in $ \Omega $; these sets will be denoted $ I^{\pm}_{\Omega}(p), J^{\pm}_{\Omega}(p) $.
\end{itemize}
Finally, a smooth function $ t: Q \rightarrow \mathds{R} $ is called {\em temporal} if $ dt(v) > 0 $ for all causal vectors. If such a function exists, we say that $ \C $ is {\em stably causal}.
\end{defi}

Cone structures also admit the following notion of geodesic.

\begin{defi}
\label{def:cone_geodesic}
Let $ \C $ be a cone structure on $ Q $. A continuous curve $ \gamma: I \rightarrow Q $ is a {\em cone geodesic} of $ \C $ if it is locally horismotic, i.e. for each $ t_0 \in I $ there exists an open neighborhood $ \Omega $ of $ \gamma(t_0) $ and $ \varepsilon > 0 $ such that $ I_{\varepsilon} \coloneqq [t_0-\varepsilon,t_0+\varepsilon] \cap I $ satisfies $ \gamma(I_{\varepsilon}) \subset \Omega $ and
\begin{equation*}
t_1 < t_2 \Leftrightarrow \gamma(t_1) \rightarrow_\Omega \gamma(t_2), \quad \forall t_1,t_2 \in I_{\varepsilon}.
\end{equation*}
\end{defi}

\subsection{Lorentz-Finsler metrics}
\label{subsec:finsler_metrics}
Even though cone structures will set the underlying framework upon which everything else will be built, to make explicit calculations it is usually easier to work with an auxiliary metric that shares the same lightlike curves. This subsection introduces both Finsler and Lorentz-Finsler metrics, relating the latter with cone structures.

\begin{defi}
A {\em Finsler metric} on $ Q $ is a continuous function $ F: TQ \rightarrow [0,\infty) $ satisfying the following properties:
\begin{itemize}
\item $ F $ is smooth and positive on $ TQ \setminus \bf{0} $.
\item $ F $ is positive homogeneous of degree 1: $ F(\lambda v) = \lambda F(v) $ for all $ \lambda > 0 $.
\item For every $ v \in T_pQ \setminus \{0\} $, $ p \in Q $, the {\em fundamental tensor} $ g^F_v $, defined as
\begin{equation}
\label{eq:fund_tensor_F}
g^F_v(u,w) \coloneqq \frac{1}{2} \left. \frac{\partial^2}{\partial r \partial s} F(v+ru+sw)^2 \right\rvert_{r=s=0}, \quad \forall u,w \in T_pQ,
\end{equation}
is positive definite.
\end{itemize}
In this case, each $ F|_{T_pQ} $ is called a {\em Minkowski norm} and we say that $ (Q,F) $ is a {\em Finsler manifold}.
\end{defi}

\begin{defi}
A {\em Lorentz-Finsler metric} on $ Q $ is a smooth function $ L: A \subset TQ \setminus \textbf{0} \rightarrow (0,\infty) $ satisfying the following properties:
\begin{itemize}
\item $ \overline{A} $ (the closure of $ A $ in $ TQ \setminus \bf{0} $) is an embedded smooth manifold with boundary $ \C $, such that each $ A_p \coloneqq A \cap T_pQ $ is non-empty, open, connected and conic.
\item $ L $ is positive homogeneous of degree 2: $ L(\lambda v) = \lambda^2 L(v) $ for all $ \lambda > 0 $.
\item For every $ v \in A_p $, $ p \in Q $, the fundamental tensor $ g^L_v $, defined as
\begin{equation}
\label{eq:fund_tensor_L}
g^L_v(u,w) \coloneqq \frac{1}{2} \left. \frac{\partial^2}{\partial r \partial s} L(v+ru+sw) \right\rvert_{r=s=0}, \quad \forall u,w \in T_pQ,
\end{equation}
has index $ n $, i.e. signature $ (+,-,\ldots,-) $.
\item $ L $ can be smoothly extended as zero to $ \C $ with non-degenerate fundamental tensor.
\end{itemize}
In this case, we say that $ (Q,L) $ is a {\em Finsler spacetime} with cone domain $ A $ and {\em lightcone} $ \C $.
\end{defi}

As the notation suggests, the lightcone $ \C $ of a Lorentz-Finsler metric is actually a cone structure (see \cite[Corollary 3.7]{JS20}). The full relationship between these two notions is given by the following result (see \cite[Theorem 3.11 and Corollary 5.8]{JS20}).

\begin{prop}
\label{prop:cone_structures}
Let $ \C $ be a cone structure on $ Q $. Then:
\begin{itemize}
\item $ \C $ is the lightcone of a (highly non-unique) Lorentz-Finsler metric on $ Q $.
\item Two Lorentz-Finsler metrics $ L, \tilde{L}: \overline{A} \rightarrow [0,\infty) $ with the same lightcone $ \C = \partial A \setminus \bf{0} $ are anisotropically equivalent, i.e. their quotient $ \tilde{L}/L $ extends to a smooth positive function $ \lambda: \overline{A} \rightarrow (0,\infty) $.
\end{itemize}
Therefore, a cone structure $ \C $ univocally determines a (non-empty) class of anisotropically equivalent Lorentz-Finsler metrics with lightcone $ \C $, any of which will be called {\em compatible with} $ \C $.
\end{prop}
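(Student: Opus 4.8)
The plan is to prove the two bullets separately; the concluding ``Therefore'' is then immediate, and throughout I may work with a fixed manifold-with-boundary $\overline{A}$ whose boundary is $\C$ (the cone domain being determined by $\C$ as its convex side). \textbf{Existence (first bullet).} First I would construct a compatible metric locally. Fixing $p\in Q$, the transversality condition (i) together with the existence of a transverse hyperplane $H$ for which $H\cap\C_p$ is a compact strongly convex submanifold of dimension $n-1$ (\cite[Lemma~2.5]{JS20}) lets me choose, on a neighborhood $U_i$ of $p$, a $1$-form $\omega_i$ positive on $\overline{A}\setminus\mathbf{0}$ together with a transverse Finsler structure, and to write a local Lorentz-Finsler metric of cone-triple (Beem) type $L_i=\omega_i^2-F_i^2$ whose lightcone is $\C|_{U_i}$. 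Here the non-radial strong convexity of $\C_p$ is precisely what makes the fundamental tensor $g^{F_i}$ positive definite---so that $F_i$ is a genuine Finsler metric---and what forces $g^{L_i}$ to have index $n$. Setting $\phi_i:=\sqrt{L_i}$ I obtain a positively $1$-homogeneous function, positive on $A$ and vanishing on $\C$, which is \emph{concave} on each cone $A_p$; concavity of $\sqrt{L}$ is the pointwise reformulation of Lorentzian signature together with convexity of the cone (the reverse triangle inequality).

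\textbf{Patching.} The delicate step---and the main obstacle---is globalization, because the naive sum $\sum_i\psi_i L_i$ of local metrics sharing $\C$ need not have Lorentzian signature: a Cauchy-Schwarz computation shows that $\sqrt{\sum_i\psi_i L_i}$ can fail to be concave. The remedy is to patch the square roots instead of the metrics. Taking a partition of unity $\{\psi_i\}$ subordinate to $\{U_i\}$, I set $\phi:=\sum_i\psi_i\phi_i$; a sum of concave functions is concave, so $\phi$ is concave, positively $1$-homogeneous, positive on $A$ and zero on $\C$, and I define $L:=\phi^2$. To check that $L$ is a bona fide Lorentz-Finsler metric, I would use that near $\C$ each local metric factors as $L_i=a_i\rho$ through a common local defining function $\rho$ of $\C$ (with $d\rho\neq0$) and smooth positive $a_i$; then $\phi_i\phi_j=\sqrt{a_ia_j}\,\rho$, so $L=\rho\,(\sum_i\psi_i\sqrt{a_i})^2$ is smooth up to and across $\C$ with $dL\neq0$ there, giving the required non-degenerate extension of $g^L$. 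Finally, from $g^L_v=d\phi_v\otimes d\phi_v+\phi(v)\,\mathrm{Hess}_v\phi$ and the fact that $\mathrm{Hess}_v\phi$ is negative semidefinite with radical exactly $\mathbb{R}v$ (by concavity and $1$-homogeneity), the $g^L$-orthogonal splitting $T_pQ=\mathbb{R}v\oplus\ker d\phi_v$ exhibits one positive (radial) direction and a negative-definite complement, i.e. signature $(+,-,\dots,-)$.

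\textbf{Anisotropic equivalence (second bullet).} Let $L,\tilde L$ share the lightcone $\C$. On the open set $A$ both are smooth and positive, so $\lambda:=\tilde L/L$ is smooth and positive there, and positively homogeneous of degree $0$ since $L,\tilde L$ are degree $2$. It remains to extend $\lambda$ smoothly and positively across $\C$. Euler's identity gives $dL_v=2\,g^L_v(v,\cdot)$, so non-degeneracy of the fundamental tensor at the cone yields $dL\neq0\neq d\tilde L$ along $\C$; hence $\C=L^{-1}(0)=\tilde L^{-1}(0)$ is a regular level set. Choosing a local defining function $\rho$ and applying Hadamard's lemma I write $L=a\rho$ and $\tilde L=b\rho$ with $a,b$ smooth; regularity forces $a,b\neq0$ on $\C$, and positivity on $A$ makes them positive up to $\C$. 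Therefore $\lambda=b/a$ extends smoothly and positively to $\overline{A}$, and these local expressions patch to a global $\lambda:\overline{A}\to(0,\infty)$. This is exactly anisotropic equivalence, and combining the two bullets proves the final assertion: existence gives a nonempty class, and anisotropic equivalence collapses all compatible metrics into a single class univocally determined by $\C$.
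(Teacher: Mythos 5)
The paper itself offers no proof of this proposition: it is imported verbatim from \cite{JS20} (Theorem~3.11 and Corollary~5.8), so your argument has to stand entirely on its own. Your proof of the second bullet does stand: the Hadamard factorizations $L=a\rho$, $\tilde L=b\rho$ through a local defining function of the regular level set $\C$ (regularity coming from $dL_v=2g^L_v(v,\cdot)\neq 0$), the positivity of $a,b$ forced by $L,\tilde L>0$ on $A$, and the observation that $b/a$ is independent of the choice of $\rho$ and $0$-homogeneous, give a complete and correct argument for anisotropic equivalence.

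The existence part, however, has a genuine gap at its very first step. Your local building blocks are cone-triple metrics $L_i=\omega_i^2-F_i^2$, but such an $L_i$ is \emph{not} smooth on $\mathrm{Span}(T_i)\cap A$ unless $F_i$ is Riemannian: a Minkowski norm squared is $C^2$ at the origin if and only if it comes from a scalar product, so $F_i^2$ is only $C^{1,1}$ along the zero section of $\ker\omega_i$ (the paper flags exactly this issue in Remark~\ref{rem:comments}(5) and again, citing Warner, in the proof of Lemma~\ref{lem:strong_convexity}). Since a generic strongly convex cone is not quadratic, no Riemannian choice of $F_i$ is available. Consequently each $\phi_i=\sqrt{L_i}$ fails to be smooth along the ray $\mathrm{Span}(T_i)\subset A|_{U_i}$, the patched $\phi=\sum_i\psi_i\phi_i$ inherits all of these singular loci, and $L=\phi^2$ is not smooth on $A$ --- so it is not a Lorentz-Finsler metric in the sense of the paper's definition. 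What your construction genuinely delivers is a metric that is smooth, of the correct signature and with lightcone $\C$ only on a neighborhood of $\C$ inside $\overline{A}$; the missing ingredient is precisely the interior smoothing of \cite[Theorem~5.6]{JS20}, which modifies the metric away from the cone (where the cone no longer constrains anything) while preserving the signature, and this cannot be waved away. A secondary, smaller gap: your signature computation $g^L_v=d\phi_v\otimes d\phi_v+\phi(v)\,\mathrm{Hess}_v\phi$ only applies for $v\in A$ (at $v\in\C$ one has $\phi=0$ and the square root is not differentiable there), so the non-degeneracy of $g^L_v$ for $v\in\C_p$ still has to be extracted from the non-radial strong convexity of $\C_p$ via the factorization $L=\rho\left(\sum_i\psi_i\sqrt{a_i}\right)^2$; the condition $dL\neq 0$ that you invoke only rules out $v$ itself lying in the radical.
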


\begin{rem}
\label{rem:comments}
Some comments regarding the previous definitions will be useful throughout this work:
\begin{enumerate}
\item Finsler spacetimes inherit the causality induced by their lightcones. In particular, since we only consider one cone $ \C_p $ at each tangent space (not a double cone as in Lorentzian geometry), this already determines a time orientation, i.e. every causal vector $ v $ is considered {\em future-directed}. One can also define {\em past-directed} vectors (when $ -v $ is causal), but this distinction will not be necessary here.
\item The fact that $ L $ is smooth on $ \C $ with non-degenerate fundamental tensor implies that $ L $ can be extended to an open conic subset $ A^* \subset TQ \setminus \bf{0} $ containing $ \overline{A} $ such that $ g_v^L $ has index $ n-1 $ on $ A^* $ and $ L < 0 $ on $ A^* \setminus \overline{A} $ (see \cite[Remark~3.6(2)]{JS20}). As usual, we will use the notation $ A^*_p \coloneqq A^* \cap T_pQ $.
\item From the definition of the fundamental tensor \eqref{eq:fund_tensor_F} of a Finsler metric $ F $ and the positive $ 1 $-homogeneity, it can be easily deduced that, for every $ v \in T_pQ \setminus \{0\} $, $ p \in Q $:
\begin{equation}
\label{eq:fund_tensor_2}
g^F_v(v,v) = F(v)^2 \qquad \text{and} \qquad g^F_v(v,w) = \frac{1}{2} \left. \frac{\partial}{\partial s} F(v+sw)^2 \right\rvert_{s=0}, \quad \forall w \in T_pQ.
\end{equation}
Analogously, from the definition of the fundamental tensor \eqref{eq:fund_tensor_L} of a Lorentz-Finsler metric $ L $ and the positive $ 2 $-homogeneity, for every $ v \in A^*_p $, $ p \in Q $:
\begin{equation}
\label{eq:fund_tensor_1}
g^L_v(v,v) = L(v) \qquad \text{and} \qquad g^L_v(v,w) = \frac{1}{2} \left. \frac{\partial}{\partial s} L(v+sw) \right\rvert_{s=0}, \quad \forall w \in T_pQ.
\end{equation}
\item Given $ \C $, one can always find a timelike one-form $ \omega $ on $ Q $ (i.e., $ \omega(v) > 0 $ for any causal vector $ v $) and an $ \omega $-unit timelike vector field $ T $ on $ Q $ (i.e., $ T $ is timelike and $ \omega(T) = 1 $). This yields the natural splitting $ TQ = \textup{Span}(T) \oplus \textup{Ker}(\omega) $, with the projection $ \pi: TQ \rightarrow \textup{Ker}(\omega) $ determined at each $ p \in Q $ by
\begin{equation}
\label{eq:v_decomposition}
v = \omega(v)T_p + \pi(v), \quad \forall v \in T_pQ.
\end{equation}
Then, there exists a unique Finsler metric $ F $ on the vector bundle $ \textup{Ker}(\omega) $ such that, for any $ v \in T_pQ $, $ p \in Q $:
\begin{equation}
\label{eq:cone_triple}
v \in \C_p \Leftrightarrow v = F(\pi(v))T_p + \pi(v).
\end{equation}
We say that $ (\omega,T,F) $ is a {\em cone triple associated with} $ \C $. Conversely, for any cone triple $ (\omega,T,F) $ composed of a non-vanishing one-form $ \omega $, an $ \omega $-unit vector field $ T $ and a Finsler metric $ F $ on $ \text{Ker}(dt) $, there exists a unique cone structure $ \C $ satisfying \eqref{eq:cone_triple}, which is said to be {\em associated with} the cone triple  (see \cite[\S~2.4]{JS20}).

Moreover, in both cases a (non-smooth) Lorentz-Finsler metric $ L: TQ \rightarrow \mathds{R} $ compatible with $ \C $ is given by
\begin{equation}
\label{eq:non_smooth_L}
L(v) \coloneqq \omega(v)^2-F(\pi(v))^2, \quad \forall v \in T_pQ, \quad \forall p \in Q.
\end{equation}
Observe that $ L $ is defined on the whole $ TQ $, but it fails to be smooth on $ \textup{Span}(T) $ (unless $ F $ is Riemannian). Nevertheless, this type of Lorentz-Finsler metrics can be smoothened without altering the metric in a neighborhood of $ \C $ (in fact, it is only necessary to modify it within an arbitrarily small neighborhood of $ \textup{Span}(T) $). As a consequence, any cone structure $ \C $ is the lightcone of a (smooth) Lorentz-Finsler metric $ L $ with cone domain $ TQ $ (see \cite[\S~5]{JS20}).

\item When the underlying manifold is a global splitting $ Q = \mathds{R} \times N $, being $ t: Q \rightarrow \mathds{R} $ the standard projection, the most natural cone triple is given by $ \omega = dt $, $ T = \partial_t $ and any Finsler metric $ F^t $ on $ \text{Ker}(dt) $. Note that $ \text{Ker}(dt_{(t,x)}) \equiv \{t\} \times T_xN $ for any $ (t,x) \in Q $, so $ F^t $ can be seen as a {\em time-dependent Finsler metric} on $ N $---namely, $ F^t $ varies smoothly with $ t $ and, for each $ t_0 \in \mathds{R} $, $ F^{t_0} $ is a Finsler metric on $ N $. In this setting, the (non-smooth) Lorentz-Finsler metric \eqref{eq:non_smooth_L} takes the form $ L \coloneqq dt^2-(F^t)^2 $, i.e.
\begin{equation*}
L(v^0,\tilde{v}) \coloneqq (v^0)^2-F^t(\tilde{v})^2, \quad \forall (v^0,\tilde{v}) \in T_pQ \equiv \mathds{R} \times T_xN, \quad \forall p=(t,x) \in Q,
\end{equation*}
where $ v^0 = \omega(v) $ and $ \tilde{v}=\pi(V) $, following the decomposition of $ v $ in \eqref{eq:v_decomposition}. The cone structure associated with $ (dt,\partial_t,F^t) $ can be expressed as $ \C = L^{-1}(0) \cap dt^{-1}((0,\infty)) $, and observe that $ t $ is a temporal function for this cone structure. Also, the fact that $ L $ is not smooth on $ \textup{Span}(\partial_t) $ will not be relevant for our purposes here, as our study will be focused on lightlike curves and $ \partial_t $ is always timelike in this case. We will use this construction explicitly in \S~\ref{sec:natural}.
\end{enumerate}
\end{rem}

\subsection{Orthogonality and transversality}
\label{subsec:orthogonality}
One of the key concepts we will be constantly using throughout the text is the notion of orthogonality. Notice that, in the classical Finslerian setting, this is a non-symmetric relation because, given a Finsler metric $ F $ and two vectors $ v, w \in T_pQ \setminus\{0\} $, one has two scalar products, $ g^F_v $ and $ g^F_w $. We will first provide an abstract definition of orthogonality for vectors in cone structures and, then, introduce the natural notion for Lorentz-Finsler metrics, which is compatible with the former.

\begin{defi}
\label{def:orth}
Let $ \C $ be a cone structure on $ Q $:
\begin{itemize}
\item We say that $ v \in \C_p $ is {\em $ \C $-orthogonal} to $ w \in T_pQ $, denoted $ v \perp_{\C} w $, if $ w \in T_v\C_p $. We will use the notation $ v^{\perp_\C} \coloneqq \{w \in T_pQ: v \perp_\C w\} = T_v\C_p $ (naturally identified as a subspace of $ T_pQ $).
\item If $ P $ is a submanifold of $ Q $ and $ v \in \C_p $, then $ v $ is {\em $ \C $-orthogonal} to $ P $, denoted $ v \perp_\C P $, if $ v \perp_\C w $ for all $ w \in T_pP $, i.e. $ T_pP \subset v^{\perp_\C} $. Similarly, if $ \Pi $ is a linear subspace of $ T_pQ $, then $ v \perp_\C \Pi $ if $ v \perp_\C w $ for all $ w \in \Pi $, i.e. $ \Pi \subset v^{\perp_\C} $.
\end{itemize}
\end{defi}

\begin{defi}
Let $ (Q,L) $ be a Finsler spacetime with cone domain $ A $ and $ (Q,F) $ a Finsler manifold.
\begin{itemize}
\item We say that $ v \in \overline{A}_p $ is {\em $ L $-orthogonal} to $ w \in T_pQ $, denoted $ v \perp_L w $, if
\begin{equation*}
g^L_v(v,w) = 0,
\end{equation*}
and we will use the notation $ v^{\perp_L} \coloneqq \{w \in T_pQ: v \perp_L w\} $.
\item Analogously, $ v \in T_pQ \setminus \{0\} $ is {\em $ F $-orthogonal} to $ w \in T_pQ $, denoted $ v \perp_F w $, if
\begin{equation*}
g^F_v(v,w) = 0,
\end{equation*}
and we will use the notation $ v^{\perp_F} \coloneqq \{w \in T_pQ: v \perp_F w\} $.
\item If $ P $ is a submanifold of $ Q $ and $ v \in \overline{A}_p $ (resp. $ v \in T_pQ \setminus \{0\} $), then $ v \perp_L P $ (resp. $ v \perp_F P $) if $ v \perp_L w $ (resp. $ v \perp_F w $) for all $ w \in T_pP $. Similarly, if $ \Pi $ is a linear subspace of $ T_pQ $, then $ v \perp_L \Pi $ (resp. $ v \perp_F \Pi $) if $ v \perp_L w $ (resp. $ v \perp_F w $) for all $ w \in \Pi $.
\end{itemize}
\end{defi}

\begin{rem}
\label{rem:orth}
It is important to keep the following observations in mind:
\begin{enumerate}
\item The non-symmetry is a feature of every definition. In particular, note that $ v^{\perp_{\C}} $ is only defined for lightlike vectors, and $ v^{\perp_L} $ for causal ones.
\item If $ L $ is a Lorentz-Finsler metric with lightcone $ \C $ and $ v \in \C_p $, then $ v^{\perp_L} = v^{\perp_\C} = T_v\C_p $ (see \cite[Proposition~3.4(iii)]{JS20}). As a result, $ v \perp_\C w $ if and only if $ v \perp_L w $ for one (and then, for all) Lorentz-Finsler metric $ L $ compatible with $ \C $.
\item For any $ v \in \C_p $, the hyperplane $ v^{\perp_{\C}} = T_v\C_p $ is lightlike and any vector in $ v^{\perp_{\C}} $ non-proportional to $ v $ is spacelike, due to the non-radial strong convexity of the cone $ \C_p $ (recall Definition~\ref{def:cone_structure}(ii)). In particular, for any cone structure $ \C $, two lightlike vectors are $ \C $-orthogonal if and only if they are linearly dependent.
\end{enumerate}
\end{rem}

Together with these definitions of orthogonality, the following notions of transversality will also appear throughout this work.

\begin{defi}
\label{def:top_transverse}
Let $ \gamma: I \rightarrow Q $ be a piecewise smooth curve, and $ P $ a submanifold of $ Q $. We say that $ \gamma $ is {\em topologically transverse} to $ P $ if it intersects $ P $ exactly once at some $ \tau \in I $. If, in addition, $ \dot{\gamma}(\tau^{\pm}) \notin T_{\gamma(\tau)}P $, such a curve is said to be {\em transverse} to $ P $. Analogously, given $ p \in P $ and $ v \in T_pQ $, we say that $ v $ is {\em transverse} to $ P $ if $ v \notin T_pP $.
\end{defi}

\begin{defi}
\label{def:c_transverse}
Let $ \C $ be a cone structure on $ Q $, and $ \eta $ a hypersurface of $ Q $. Given $ p \in \eta $ and $ v \in \C_p $, we say that $ v $ is $ \C $-{\em transverse} to $ \eta $ if $ v^{\perp_\C} \not= T_p\eta $.
\end{defi}

\begin{rem}
\label{rem:c_transverse}
The restriction that $ v $ be $ \C $-transverse to $ \eta $ is equivalent to saying that $ \C_p $ cannot be tangent to $ T_p\eta $ in the direction $ v $, i.e. $ T_v\C_p \not= T_p\eta $. Another equivalent condition is that $ v \not\perp_{\C} T_p\eta $, i.e. there exists $ w \in T_p\eta $ such that $ v \not\perp_{\C} w $. In particular, this automatically holds when $ v $ is transverse to $ \eta $ (transversality implies $ \C $-transversality). Conversely, $ v^{\perp_\C} = T_v\C_p = T_p\eta $ (or, equivalently, $ v \perp_{\C} T_p\eta $) if and only if $ T_p\eta $ is lightlike and $ v \in T_p\eta $.
\end{rem}

\subsection{Chern anisotropic connection and geodesics}
\label{subsec:chern}
Geodesics will play an important role in this work. For the sake of completeness, we provide now an overview on how to define and compute these curves in Finsler spacetimes. Even though we give here very general definitions, keep in mind that our study will be focused specifically on lightlike geodesics. Throughout this subsection, the term {\em $ L $-admissible} (applied to curves or vector fields) means that each tangent vector is causal---or, more generally, that it belongs to an extension $A^*$ of the cone domain of $ L $, following Remark~\ref{rem:comments}(2).

\begin{notation}
\label{not:notation1}
Let $ \varphi=(x^0,\ldots,x^n) $ be a coordinate system on $ \Omega \subset Q $ and consider also the associated coordinate system $ (\varphi,\dot{\varphi}) $ on $ T\Omega \subset TQ $, being $ \dot{\varphi} = (y^0,\ldots,y^n) $ the natural coordinates on the fiber induced by $ \varphi $. Any (piecewise smooth) curve $ \gamma $ will be written in these coordinates as
\begin{equation*}
\begin{split}
\gamma(t) & \equiv (\gamma^0(t),\ldots,\gamma^n(t)), \\
\dot{\gamma}(t) & \equiv (\dot{\gamma}^0(t),\ldots,\dot{\gamma}^n(t)), \quad \text{with} \quad \dot{\gamma}^i(t) = \frac{d\gamma^i}{dt}(t),
\end{split}
\end{equation*}
and, similarly, we will write $ V \equiv (V_1,\ldots,V_n) $ for vector fields and $ v \equiv (v^1,\ldots,v^n) $ for vectors. Also, Einstein's summation convention will be used throughout the text, omitting the sum from $ 0 $ to $ n $ when an index appears up and down.
\end{notation}

Let $ (Q,L) $ be a Finsler spacetime with cone domain $ A $. A ($ 0 $-homogeneous) {\em anisotropic connection} is a global intrinsic notion (see \cite[Definition~4]{JSV22}) which, from a practical viewpoint, determines an affine connection $ \nabla^V $ for each choice of an $ L $-admissible vector field $ V $ on $ \Omega \subset Q $.\footnote{This is an intermediate notion between the nonlinear connection and the classical Finsler connections (see \cite[\S~4 and 6]{JSV22}).} This connection $ \nabla^V $ only depends pointwise on $ v = V_p $ (not on the extension of $ v $) and, locally, it is characterized by its {\em Christoffel symbols}, defined as the smooth functions $ \Gamma^k_{\ ij}: T\Omega \cap \overline{A} \rightarrow \mathds{R} $ satisfying:
\begin{equation*}
\nabla^v_{\partial_{x^i}}(\partial_{x^j}) = \Gamma^k_{\ ij}(v) \partial_{x^k}, \quad i,j,k = 0,\ldots,n.
\end{equation*}
This allows us, given a smooth curve $ \gamma(t) $ and an $ L $-admissible vector field $ W $ along $ \gamma $, to define the {\em covariant derivative $ D^W_{\gamma} $ along $ \gamma $ with reference $ W $} (see \cite[Proposition~5]{JSV22}):
\begin{equation}
\label{eq:covariant}
D^W_{\gamma}X \coloneqq \frac{dX^i}{dt} \partial_{x^i} + X^i \dot{\gamma}^j \Gamma^k_{\ ij}(W) \partial_{x^k},
\end{equation}
for every smooth vector field $ X $ along $ \gamma $. Observe that the (positive) $ 0 $-homogeneity of $ \nabla^V $ is transferred to the Christoffel symbols and, thus, also to the covariant derivative: $ D^{\lambda W}_{\gamma} = D^W_{\gamma} $ for all $ \lambda > 0 $.

This way, we say that an $ L $-admissible smooth curve $ \gamma $ is a {\em geodesic} of $ L $ if it is an auto-paralell curve of the chosen anisotropic connection, i.e. $ D_{\gamma}^{\dot{\gamma}}\dot\gamma = 0 $. In coordinates, using \eqref{eq:covariant}, this is equivalent to the {\em geodesic equations}:
\begin{equation}
\label{eq:geod_eq0}
\ddot{\gamma}^k + \Gamma^k_{\ ij}(\dot{\gamma}) \dot{\gamma}^i \dot{\gamma}^j = 0, \quad k = 0,\ldots,n.
\end{equation}
In addition, we say that $ \gamma $ is a {\em pregeodesic} if it can be reparametrized as a geodesic (keeping the orientation). As in the semi-Riemannian case, one has the following characterization of pregeodesics.

\begin{prop}
\label{prop:pregeodesic}
Let $ (Q,L) $ be a Finsler spacetime with an anisotropic connection, and $ \gamma: I \rightarrow Q $ an $ L $-admissible smooth curve. Then, $ \gamma $ is a pregeodesic of $ L $ if and only if
\begin{equation}
\label{eq:cond_pregeod}
D_{\gamma}^{\dot{\gamma}}\dot\gamma(t) = f(t) \dot{\gamma}(t), \quad \forall t \in I,
\end{equation}
for some function $ f: I \rightarrow \mathds{R} $.
\end{prop}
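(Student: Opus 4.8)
The plan is to reduce the statement to the classical reparametrization argument for pregeodesics, taking care that the anisotropic connection $ \nabla^V $, and hence the covariant derivative $ D^W_\gamma $, depends on a reference field. The decisive structural fact---stated just before the proposition---is that the Christoffel symbols $ \Gamma^k_{\ ij} $ are positively $ 0 $-homogeneous in their vector argument, so that $ D^{\lambda W}_\gamma = D^W_\gamma $ for every $ \lambda > 0 $. This is exactly what makes the direction-dependent connection behave, under positive reparametrizations, like an ordinary affine connection.

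First I would fix an orientation-preserving reparametrization $ \tilde\gamma(s) = \gamma(t(s)) $ with $ h(s) \coloneqq t'(s) > 0 $, so that $ \tilde\gamma'(s) = h(s)\,\dot\gamma(t(s)) $ (note $ \tilde\gamma $ is again $ L $-admissible, since the cone domain is conic). Using the coordinate expression \eqref{eq:covariant} for the covariant derivative with both reference and field equal to $ \tilde\gamma' $, and crucially invoking the $ 0 $-homogeneity $ \Gamma^k_{\ ij}(h\dot\gamma) = \Gamma^k_{\ ij}(\dot\gamma) $, I would establish the reparametrization formula
\[
D_{\tilde\gamma}^{\tilde\gamma'}\tilde\gamma'(s) = h(s)^2\,\big(D_\gamma^{\dot\gamma}\dot\gamma\big)(t(s)) + h'(s)\,\dot\gamma(t(s)).
\]
This is a short computation: differentiating $ (\tilde\gamma')^k = h\,\dot\gamma^k $ in $ s $ produces the terms $ h'\dot\gamma^k + h^2\ddot\gamma^k $, while the quadratic Christoffel term contributes $ h^2\dot\gamma^i\dot\gamma^j\Gamma^k_{\ ij}(\dot\gamma) $ after the homogeneity reduction, and grouping recovers the geodesic operator $ D_\gamma^{\dot\gamma}\dot\gamma $ scaled by $ h^2 $ plus the radial remainder $ h'\dot\gamma $.

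The two implications then follow directly from this formula. For the forward direction, if $ \tilde\gamma $ is a geodesic then the left-hand side vanishes, and dividing by $ h^2 > 0 $ gives \eqref{eq:cond_pregeod} with $ f(t) = -h'/h^2 $ read off along $ t = t(s) $. For the converse, given $ f $ satisfying \eqref{eq:cond_pregeod}, I would produce the reparametrization explicitly: writing the sought change of parameter as $ s = s(t) $ with $ \psi \coloneqq s' > 0 $ and substituting $ h(s) = 1/\psi(t(s)) $ into $ -h'/h^2 = f $ turns the condition into the linear ODE $ (\ln\psi)' = f $, solved by $ \psi(t) = \exp\!\big(\int_{t_0}^t f\big) > 0 $. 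Then $ s(t) = \int_{t_0}^t \psi $ is smooth and strictly increasing on all of $ I $, hence orientation-preserving, and by the reparametrization formula its inverse turns $ \gamma $ into a geodesic.

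The only genuine subtlety is the homogeneity step; everything else is bookkeeping in coordinates and an elementary first-order ODE. Without $ 0 $-homogeneity the reference direction of the connection would change under the rescaling $ \dot\gamma \mapsto h\dot\gamma $, and the clean factor $ h^2 $ in front of $ D_\gamma^{\dot\gamma}\dot\gamma $ would be lost---so this hypothesis is precisely what lets the semi-Riemannian proof carry over essentially verbatim. Since $ D^W_\gamma $ is intrinsically defined, the identity is coordinate-independent and the local computation suffices.
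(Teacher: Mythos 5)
Your proposal is correct and follows essentially the same route as the paper: the same reparametrization identity $ D_{\tilde\gamma}^{\tilde\gamma'}\tilde\gamma' = h^2 D_\gamma^{\dot\gamma}\dot\gamma + h'\dot\gamma $ (the paper's Equation~\eqref{eq:pregeodesic} with $ h = \dot t $), the same reading-off of $ f = -h'/h^2 $ in the forward direction, and the same explicit solution $ \dot s(t) = e^{\int_{t_0}^t f} $ of the resulting first-order ODE in the converse. Your explicit isolation of the $ 0 $-homogeneity of the Christoffel symbols as the step that makes the computation go through is a point the paper leaves implicit, but it is the same argument.
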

\begin{proof}
Let $ \beta(s) \coloneqq \gamma(t(s)) $ be an arbitrary (positive) reparametrization of $ \gamma $, so that $ \dot{\beta}(s) = \dot{t}(s)\dot{\gamma}(t(s)) $. Using \eqref{eq:covariant}, we get
\begin{equation}
\label{eq:pregeodesic}
D_{\beta}^{\dot{\beta}}\dot{\beta}(s) = \ddot{t}(s)\dot{\gamma}(t(s)) + \dot{t}(s)^2 D_{\gamma}^{\dot{\gamma}}\dot\gamma(t(s)).
\end{equation}
If $ \gamma $ is a pregeodesic, then $ \beta $ becomes a geodesic for some $ t(s) $. This means that $ D_{\beta}^{\dot{\beta}}\dot{\beta}(s) = 0 $ and thus, from \eqref{eq:pregeodesic}, $ \gamma $ satisfies \eqref{eq:cond_pregeod} with $ f(t) = \left. -\frac{\ddot{t}(s)}{\dot{t}(s)^2} \right\rvert_{s=s(t)} $. Conversely, if \eqref{eq:cond_pregeod} holds, we obtain from \eqref{eq:pregeodesic}:
\begin{equation*}
D_{\beta}^{\dot{\beta}}\dot{\beta}(s) = (\ddot{t}(s) + \dot{t}(s)^2 f(t(s))) \dot\gamma(t(s)),
\end{equation*}
and then, it is straightforward to check that, choosing $ t(s) $ so that its inverse $ s(t) $ satisfies $ \dot{s}(t) = \dot{s}(t_0) e^{\int_{t_0}^t f(r)dr} $ for some $ s_0 \in I $ (with $ \dot{s}(t_0) > 0 $), $ \beta $ becomes a geodesic.
\end{proof}

Throughout this work, we will use specifically the {\em Chern anisotropic connection} $ \nabla $ (see \cite[Theorem~4]{JSV22}), which is determined by the following family of affine connections: given an $ L $-admissible vector field $ V $ on $ \Omega \subset Q $, we define $ \nabla^V $ as the unique affine connection which is torsion-free and almost $ g $-compatible (see \cite[Proposition~2.3]{J14}). The almost $ g $-compatibility is also transferred to the covariant derivative (see \cite[Equation~(4)]{J14}):
\begin{equation}
\label{eq:g_compat}
\frac{d}{dt}g^L_W(X,Y) = g^L_W(D^W_{\gamma}X,Y) + g^L_W(X,D^W_{\gamma}Y) + 2C^L_W(D^W_{\gamma}W,X,Y),
\end{equation}
for any smooth vector fields $ X,Y $ along $ \gamma $, where $ C^L $ is the {\em Cartan tensor} of $ L $, defined for every $ v \in \overline{A}_p $, $ p \in Q $, as
\begin{equation}
\label{eq:cartan}
C^L_v(w_1,w_2,w_3) \coloneqq \frac{1}{4} \left.\frac{\partial^3}{\partial s^1 \partial s^2 \partial s^3} L(v+s^i w_i) \right\rvert_{s^1=s^2=s^3=0}, \quad \forall w_1, w_2, w_3 \in T_pQ,
\end{equation}
which is symmetric in its three arguments and satisfies (from the $ 2 $-homogeneity of $ L $ and Euler's theorem): $ C^L_v(v,\cdot,\cdot) = C^L_v(\cdot,v,\cdot) = C^L_v(\cdot,\cdot,v) = 0 $.

In coordinates, the Christoffel symbols of $ \nabla $ are given by
\begin{equation*}
\Gamma^{k}_{\ ij}(v) = \frac{1}{2}(g^{L}_{v})^{kr}\left(\frac{\delta (g^{L}_{v})_{rj}}{\delta x^i}+\frac{\delta (g^{L}_{v})_{ri}}{\delta x^j}-\frac{\delta (g^{L}_{v})_{ij}}{\delta x^r}\right), \quad \forall v \in \overline{A}_p, \quad \forall p \in Q,
\end{equation*}
where $ (g^{L}_{v})_{ij} \coloneqq g^L_v(\partial_{x^i},\partial_{x^j}) $, $ (g^{L}_{v})^{ij} $ denotes the coordinates of the inverse matrix of $ (g^{L}_{v})_{ij} $ and $ \delta/\delta x^i $ is defined using the coefficients $ N^k_{\ i} $ of the {\em nonlinear connection}, namely
\begin{equation*}
\frac{\delta}{\delta x^i} \coloneqq \frac{\partial}{\partial x^i} - N^k_{\ i}\frac{\partial}{\partial y^k}, \quad \text{with} \quad N^k_{\ i}(v) = \gamma^k_{\ ij}(v)v^j-(g^L_v)^{kj}(C_v^L)_{ijl}\gamma^l_{\ rs}(v)v^rv^s,
\end{equation*}
being $ (C_v^L)_{ijk} \coloneqq C_v^L(\partial_{x^i},\partial_{x^j},\partial_{x^k}) $ and $ \gamma^k_{\ ij} $ the {\em formal Christoffel symbols} of $ L $, defined as
\begin{equation}
\label{eq:christoffel}
\gamma_{\ ij}^k(v) \coloneqq \frac{1}{2}(g^{L}_{v})^{kr}\left(\frac{\partial (g^{L}_{v})_{rj}}{\partial x^i}+\frac{\partial (g^{L}_{v})_{ri}}{\partial x^j}-\frac{\partial (g^{L}_{v})_{ij}}{\partial x^r}\right), \quad \forall v \in \overline{A}_p, \quad \forall p \in Q.
\end{equation}
In the end, when working with the Chern anisotropic connection, it turns out that only these formal Christoffel symbols contribute to the geodesic equations \eqref{eq:geod_eq0}, so an $ L $-admissible smooth curve $ \gamma $ is a geodesic of $ L $ if and only if
\begin{equation}
\label{eq:geod_eq}
\ddot{\gamma}^k + \gamma^k_{\ ij}(\dot{\gamma}) \dot{\gamma}^i \dot{\gamma}^j = 0, \quad k = 0,\ldots,n.
\end{equation}

\begin{rem}
We will be interested in properties of geodesics, which can be determined from the nonlinear connection. Hence, the results will be independent of the choice of the anisotropic connection, so long as it shares the same geodesics as the Chern anisotropic connection (e.g., this is the case of the Berwald anisotropic connection). Anyway, the use of the Chern anisotropic connection will be useful for computations related to geodesics, in a similar way the choice of a compatible Lorentz-Finsler metric is useful for computations related to cone structures.

Also, it is worth pointing out that all of the above remains the same for the definition of geodesics in a Finsler manifold $ (Q,F) $, simply by replacing the Lorentz-Finsler metric $ L $ with the Finsler metric $ F $ everywhere (except in \eqref{eq:cartan}, where $ L $ should be replaced by $ F^2 $) and, consistently, $ \overline{A} $ with $ TQ \setminus \bf{0} $.
\end{rem}

Finally, the association between cone structures and Lorentz-Finsler metrics pointed out in Proposition~\ref{prop:cone_structures} is also transferred to their cone and lightlike geodesics (see \cite[Proposition 3.4]{JavSoa20} and \cite[Theorem 6.6]{JS20}).

\begin{thm}
\label{thm:cone_geodesics}
Given a cone structure $ \C $ on $ Q $:
\begin{itemize}
\item All Lorentz-Finsler metrics compatible with $ \C $ share the same lightlike pregeodesics.
\item A curve is a cone geodesic if and only if it is a lightlike pregeodesic of one (and then, of all) Lorentz-Finsler metric compatible with $ \C $.
\end{itemize}
\end{thm}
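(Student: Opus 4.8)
The plan is to prove the single fixed-metric statement ``for each Lorentz-Finsler metric $L$ compatible with $\C$, a curve is a cone geodesic of $\C$ if and only if it is a lightlike pregeodesic of $L$'', and then to read off both bullet points from it. This reduction is the organizing idea: being a cone geodesic is defined (Definition~\ref{def:cone_geodesic}) purely through the horismotic relation of $\C$, and the causality it uses is itself intrinsic to $\C$ (Definition~\ref{def:causality}); hence the property ``cone geodesic'' is manifestly independent of $L$. Once the fixed-$L$ equivalence is available, any two compatible metrics $L,\tilde L$ satisfy the chain ``$\gamma$ is an $L$-lightlike pregeodesic $\Leftrightarrow$ $\gamma$ is a cone geodesic $\Leftrightarrow$ $\gamma$ is a $\tilde L$-lightlike pregeodesic'', which is precisely the first bullet together with the ``(and then, of all)'' clause of the second. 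No circularity arises, provided the fixed-$L$ equivalence is established by direct causal analysis rather than by comparing two metrics.

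For the fixed-$L$ equivalence I would fix a compatible $L$ with its Chern anisotropic connection and argue each implication separately. ($\Rightarrow$) Suppose $\gamma$ is locally horismotic. Since $\gamma(t_1)\rightarrow_\Omega\gamma(t_2)$ for $t_1<t_2$ forces $\gamma(t_2)\in J^+_\Omega(\gamma(t_1))$, consecutive points are causally related and a standard limiting argument shows $\gamma$ is causal. Next, $\dot\gamma$ cannot be timelike anywhere: by openness of $A_p$ and continuity, a timelike tangent would make two nearby points chronologically related, i.e. $\gamma(t_2)\in I^+_\Omega(\gamma(t_1))$, contradicting $\gamma(t_1)\rightarrow_\Omega\gamma(t_2)$; hence $\dot\gamma$ is lightlike. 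Finally, $\gamma$ must be a pregeodesic: if $D^{\dot\gamma}_\gamma\dot\gamma\notin\operatorname{span}(\dot\gamma)$ at some point, one rounds the corner by a fixed-endpoint deformation that pushes an interior point strictly inside the cone, producing a timelike connection between two points of $\gamma$ and again contradicting horismoticity. By Proposition~\ref{prop:pregeodesic} this is exactly the failure of the pregeodesic condition, so $\gamma$ is a lightlike pregeodesic of $L$.

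($\Leftarrow$) Conversely, reparametrize a lightlike pregeodesic as a lightlike geodesic of $L$. For $t_1<t_2$ the arc of $\gamma$ is causal, giving $\gamma(t_2)\in J^+_\Omega(\gamma(t_1))$; the real content is the local achronality $\gamma(t_2)\notin I^+_\Omega(\gamma(t_1))$ for a small enough $\Omega$. Here I would use the local structure of the cone exponential map of $\nabla$ to show that lightlike geodesics issuing from a point lie on the local boundary $\partial J^+_\Omega$ and are achronal near it; a convenient device is to flank $\C_p$ by slightly narrower and wider Lorentzian cones and transfer the classical Lorentzian fact that null geodesics locally generate $\partial J^+$ to $\C$ by a comparison/limiting argument. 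This yields local horismoticity, so $\gamma$ is a cone geodesic.

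The main obstacle is the ($\Leftarrow$) direction together with the corner-rounding step of ($\Rightarrow$): both hinge on the local causal geometry near the cone, where the fundamental tensor $g^L_v$ degenerates along the radial null direction (its radical is $\operatorname{span}(v)$, and $v^{\perp_L}=T_v\C_p=v^{\perp_\C}$ by Remark~\ref{rem:orth}). Controlling the exponential map and the variation fields \emph{across} this degeneracy---not the routine identity $g^L_v(v,v)=L(v)=0$ on $\C$---is the delicate part. By contrast, a purely computational confirmation of the first bullet is available and comparatively routine: writing $\tilde L=\lambda L$ with $\lambda>0$ smooth (Proposition~\ref{prop:cone_structures}) and comparing the formal Christoffel symbols \eqref{eq:christoffel}, one checks that the two geodesic sprays differ, on a lightlike $v$, by a purely radial term proportional to $v$, because the non-radial contributions carry a factor $L(v)$ or a contraction $C^L_v(v,\cdot,\cdot)$ that vanishes on $\C$ (cf.~\eqref{eq:cartan}); hence the pregeodesic condition of Proposition~\ref{prop:pregeodesic} is preserved. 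This mirrors the conformal invariance of null geodesics in Lorentzian geometry.
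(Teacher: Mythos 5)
First, a point of comparison: the paper does not prove Theorem~\ref{thm:cone_geodesics} at all --- it quotes it from the literature, the first bullet being \cite[Proposition~3.4]{JavSoa20} (anisotropic conformal invariance of lightlike geodesics) and the second \cite[Theorem~6.6]{JS20}. Your architecture --- prove the fixed-$L$ equivalence ``cone geodesic $\Leftrightarrow$ lightlike pregeodesic of $L$'' and read off both bullets from the fact that ``cone geodesic'' is defined purely through the causality of $\C$ and is therefore metric-independent --- is correct, and it is essentially how the two cited results combine. Your closing computational remark is also the actual mechanism of \cite{JavSoa20}, though the bookkeeping is heavier than ``a factor of $L(v)$ or $C^L_v(v,\cdot,\cdot)$'': for $\tilde L=\lambda L$ one has $g^{\tilde L}_v=\lambda(v)\,g^L_v+\tfrac12\bigl(\partial\lambda\otimes\partial L+\partial L\otimes\partial\lambda\bigr)(v)+\tfrac12 L(v)\,\partial^2\lambda(v)$, and the cross terms do \emph{not} vanish on $\C$ (only the last term does), so the radiality of the spray difference requires a genuine computation rather than inspection.

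The concrete gap is in your ($\Leftarrow$) direction. The device you propose --- flanking $\C_p$ by slightly narrower and wider Lorentzian cones and transferring the classical achronality of null geodesics --- cannot work, because achronality does not sandwich between cones. If $\C'$ is a strictly wider Lorentzian cone, a $\C$-lightlike geodesic is $\C'$-timelike, so its endpoints are $\C'$-chronologically related and the Lorentzian statement for $\C'$ gives nothing; if $\C''$ is a strictly narrower cone, a $\C$-timelike curve competing with $\gamma$ need not be $\C''$-causal at all, so achronality with respect to $\C''$ does not exclude it. The proofs in the literature instead establish \emph{intrinsically} that $\exp_p(\C_p)$ is locally an achronal Lipschitz hypersurface generated by the lightlike geodesics (this is exactly the content of \cite[Propositions~1 and 4]{JP22}, which the present paper invokes in the proof of Lemma~\ref{lem:strong_convexity}), via an anisotropic Gauss-lemma-type argument run with the Chern connection. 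Together with the corner-rounding lemma you sketch for ($\Rightarrow$) --- which is the right idea but likewise nontrivial, since the deformation must be controlled where $g^L_v$ restricted to $T_v\C_p$ degenerates along $\operatorname{Span}(v)$ --- this is where all the work lies, and neither step is supplied by your comparison trick.
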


\section{General setting}
\label{sec:setting}
Let $ Q $ be a smooth manifold of dimension $ n+1 \geq 3 $ and consider a hypersurface $ \eta $ that divides it into two open subsets $ Q_1, Q_2 $, so that their closures $ \overline{Q}_1, \overline{Q}_2 $ (satisfying $ \overline{Q}_1 \cup \overline{Q}_2 = Q $) are smooth manifolds with the same boundary $ \eta = \partial Q_1 = \partial Q_2 $. Let $ \C^1, \C^2 $ be two different cone structures defined respectively on $ \overline{Q}_1, \overline{Q}_2 $,\footnote{This means, in particular, that $ \C^1, \C^2 $ can be smoothly extended to an open region a little beyond $ \eta $.} which in general do not match even continuously on $ \eta $. When needed, $ L_{\mu} $ ($ \mu = 1,2 $) will denote any Lorentz-Finsler metric compatible with $ \C^{\mu} $, so that $ (Q_{\mu},L_{\mu}) $ is a Finsler spacetime with lightcone $ \C^{\mu} $. Some comments regarding the causality of the different elements we will work with are in order:
\begin{itemize}
\item We do not impose any condition on the causality of the interface $ \eta $ a priori. However, this will become relevant---and will be studied in detail---in \S~\ref{sec:existence}.

\item Similarly, no causality condition is imposed on any cone structure, although global hyperbolicity or, at least, stable causality (so that there is a temporal function) are typical assumptions in this setting, particularly when there is a physical interpretation involved. This particular case is addressed in \S~\ref{sec:natural}.
\end{itemize}

Throughout this work, we will focus mainly on the study of lightlike curves, with the following generalizations to the extended setting $ (Q,\C^1 \cup \C^2) $ presented above. Let $ \gamma: [a,b] \rightarrow Q $ be a (piecewise smooth) curve topologically transverse to $ \eta $ (recall Definition~\ref{def:top_transverse}), with $ \gamma|_{[a,\tau)} \subset Q_1 $, $ \gamma|_{(\tau,b]} \subset Q_2 $ and $ \gamma(\tau) \in \eta $, for some $ \tau \in (a,b) $. We say that $ \gamma $ is {\em lightlike} if it is so for both cone structures, i.e.
\begin{equation*}
\begin{split}
& \dot{\gamma}(t) \in \C^1_{\gamma(t)}, \quad \forall t \in [a,\tau], \\
& \dot{\gamma}(t) \in \C^2_{\gamma(t)}, \quad \forall t \in [\tau,b],
\end{split}
\end{equation*}
which means that, at the interface, $ \dot{\gamma}(\tau^-) \in \C^1_{\gamma(\tau)} $ and $ \dot{\gamma}(\tau^+) \in \C^2_{\gamma(\tau)} $. Also, for any Lorentz-Finsler metrics $ L_1, L_2 $ compatible with $ \C^1, \C^2 $:
\begin{equation*}
\begin{split}
L_1(\dot{\gamma}(t)) & = 0, \quad \forall t \in [a,\tau], \\
L_2(\dot{\gamma}(t)) & = 0, \quad \forall t \in [\tau,b],
\end{split}
\end{equation*}
where $ L_1(\dot{\gamma}(\tau)) \coloneqq L_1(\dot{\gamma}(\tau^-)) $ and $ L_2(\dot{\gamma}(\tau)) \coloneqq L_2(\dot{\gamma}(\tau^+)) $.

\begin{defi}
\label{def:arrival_time}
Let $ P \subset Q_1 $ be a submanifold of codimension $ r \geq 1 $, and $ \alpha: I \rightarrow Q_2 $ an embedded smooth curve, where $ I \subset \mathds{R} $ is an open interval.\footnote{This setting will be used directly for refraction, but the case of reflection---when $ \alpha $ is contained in $ Q_1 $---will be seen as a particular case of this framework (see \S~\ref{sec:reflection} below).} Fixing an interval $ [a,b] $ and some $ \tau \in (a,b) $, let $ \mathcal{N}_{P,\alpha} $ be the set of all (regular) piecewise smooth curves $ \gamma: [a,b] \rightarrow Q $ such that (see Figure~\ref{fig:setting}):
\begin{itemize}
\item $ \gamma(a) \in P $, $ \gamma(\tau) \in \eta $ and $ \gamma(b) \in \textup{Im}(\alpha) $.
\item $ \gamma $ is topologically transverse to $ \eta $ and lightlike.
\end{itemize}
We define the {\em arrival time functional} $ \T $ as
\begin{equation}
\label{eq:arrival_time}
\begin{array}{crcl}
\T \colon & \mathcal{N}_{P,\alpha} & \longrightarrow & I \\
& \gamma & \longmapsto & \T[\gamma] \coloneqq \alpha^{-1}(\gamma(b)).
\end{array}
\end{equation}
In addition, we say that $ \gamma \in \mathcal{N}_{P,\alpha} $ is {\em cone-transverse} to $ \eta $ if $ \dot{\gamma}(\tau^-) $ is $ \C^1 $-transverse to $ \eta $ and $ \dot{\gamma}(\tau^+) $ is $ \C^2 $-transverse to $ \eta $ (recall Definition~\ref{def:c_transverse}).
\end{defi}

\begin{figure}
\centering
\includegraphics[width=0.45\textwidth]{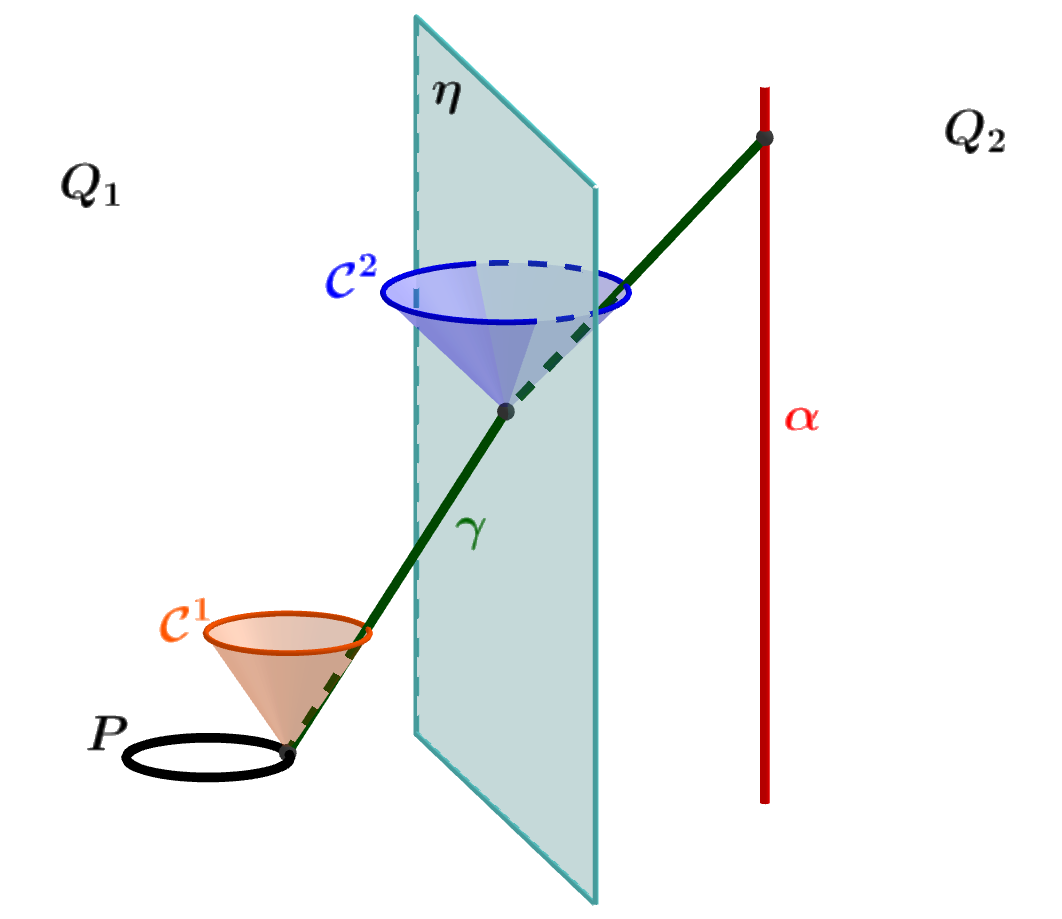}
\caption{A simple representation of the setting we are working on. We highlight that $ P $, $ \eta $ and $ \alpha $ can have, in principle, any causal character (in the figure, $ P $ is $ \C^1 $-spacelike, $ \eta $ is timelike for both cone structures and $ \alpha $ is $ \C^2 $-timelike).}
\label{fig:setting}
\end{figure}

\begin{rem}
\label{rem:parametrization}
Some justifications for the hypotheses in Definition~\ref{def:arrival_time} are worth mentioning:
\begin{enumerate}
\item Our study will be focused on the local behaviour of $ \gamma $ around $ \tau $, so the topological transversality becomes a natural simplification---if the global curve ends up crossing the interface more than once at isolated points, each intersection can be treated separately. If we remove this hypothesis, we would have not only that $ \gamma $ can remain in $ \eta $, but also the possibility that $ \gamma $ crosses infinitely many times the interface close to $ \tau $. This can happen even in the most favorable case when the metrics on $ Q_1 $ and $ Q_2 $ are the same, $ \gamma $ is a lightlike geodesic with no breaks and $ \eta $ is timelike and smooth (this can be constructed explicitly by reasoning as in \cite[Appendix~A]{AFS21}).
\item For some results, we will need to impose the stronger restriction that $ \gamma $ be transverse or cone-transverse to $ \eta $. Specifically, the former will be required in Proposition~\ref{prop:variational_field}(II), and the latter in Theorem~\ref{thm:snell}(II), Corollary~\ref{thm:reflection}(II) and \S~\ref{subsec:orientation} below.
\item Note that $ \T[\gamma] $ is independent of the parametrization of $ \gamma $, so any piecewise smooth curve from $ P $ to $ \alpha $ (crossing $ \eta $ once) can be reparametrized so that it belongs to $ \mathcal{N}_{P,\alpha} $, without affecting its arrival time. Therefore, the choice of $ [a,b] $ and $ \tau $ is arbitrary and non-restrictive. Also, we stress that $ P $ can be, in particular, a single point.
\end{enumerate}
\end{rem}

\begin{defi}
\label{def:variation}
Let $ \gamma \in \mathcal{N}_{P,\alpha} $, with (possible) breaks at $ \{t_i\}_{i=1}^k \subset (a,b) $ (including $ t = \tau $):
\begin{itemize}
\item[(i)] An {\em admissible variation} $ \Lambda $ of $ \gamma $ is a continuous map
\begin{equation*}
\begin{array}{crcl}
\Lambda \colon & (-\varepsilon,\varepsilon) \times [a,b] & \longrightarrow & Q \\
& (\omega,t) & \longmapsto & \Lambda(\omega,t) \coloneqq \gamma_{\omega}(t)
\end{array}
\end{equation*}
such that
\begin{itemize}
\item[$ \circ $] $ \Lambda $ is smooth on $ (-\varepsilon,\varepsilon) \times [t_{i-1},t_i] $ for $ i = 1,\ldots,k+1 $, where $ t_0 = a $ and $ t_{k+1} = b $.

\item[$ \circ $] $ \Lambda(\omega,\cdot) = \gamma_{\omega} \in \mathcal{N}_{P,\alpha} $ for all $ \omega \in (-\varepsilon,\varepsilon) $, with $ \gamma_0 = \gamma $. In particular, $ \gamma_{\omega}(a) \in P $, $ \gamma_{\omega}(\tau) \in \eta $ and $ \gamma_{\omega}(b) \in \textup{Im}(\alpha) $.
\end{itemize}
\item[(ii)] Let $ \mathfrak{X}(\gamma) $ be the set of all piecewise smooth vector fields along $ \gamma $ (with one possible break at $ t=\tau $). The {\em variational vector field} $ Z \in \mathfrak{X}(\gamma) $ associated with $ \Lambda $ is given by the tangent vector of the curve $ \omega \mapsto \Lambda(\omega,t) $ at $ \omega = 0 $, i.e.
\begin{equation*}
Z(t) \coloneqq \frac{\partial \Lambda}{\partial \omega}(0,t) = \left. \frac{\partial \gamma_{\omega}(t)}{\partial \omega} \right\rvert_{\omega = 0}.
\end{equation*}

\item[(iii)] Consistently, any $ Z \in \mathfrak{X}(\gamma) $ will be called an {\em admissible variational vector field} if it is the variational vector field associated with an admissible variation of $ \gamma $.

\item[(iv)] We say that $ \gamma $ is a {\em critical point} of $ \T $ if
\begin{equation*}
\left. \frac{d}{d\omega} \T[\gamma_{\omega}] \right\rvert_{\omega=0} = 0
\end{equation*}
for every admissible variation $ \Lambda(\omega,\cdot) = \gamma_{\omega} $ (notice that the map $ \omega\mapsto \T[\gamma_\omega] = \alpha^{-1}(\gamma_\omega(b)) $ is always smooth).
\end{itemize}
\end{defi}

\begin{notation}
Following Notation~\ref{not:notation1}, any coordinate system $ (\varphi,\dot{\varphi}) $ on $ T\Omega \subset TQ $ will be written as $ \varphi = (x^0,\ldots,x^n) $, $ \dot{\varphi} = (y^0,\ldots,y^n) $ and, if $ \Lambda(\omega,t) = \gamma_{\omega}(t) $ is an admissible variation of $ \gamma $ with associated variational vector field $ Z $, then in coordinates:
\begin{equation*}
\begin{split}
\gamma_{\omega}(t) & \equiv (\gamma_{\omega}^0(t),\ldots,\gamma_{\omega}^n(t)), \\
\dot{\gamma}_{\omega}(t) & \equiv (\dot{\gamma}_{\omega}^0(t),\ldots,\dot{\gamma}_{\omega}^n(t)), \quad \text{with} \quad \dot{\gamma}_{\omega}^i(t) = \frac{\partial \gamma_{\omega}^i(t)}{\partial t}, \\
Z(t) & \equiv (Z^0(t),\ldots,Z^n(t)), \quad \text{with} \quad Z^i(t) = \left. \frac{\partial \gamma_{\omega}^i(t)}{\partial \omega} \right\rvert_{\omega=0}.
\end{split}
\end{equation*}
\end{notation}

\section{Fermat's principle}
\label{sec:fermat}
Snell's law is a well-known formula used in physics (and particularly, in optics) to describe the change in direction that light---or any other wave---undergoes when passing through the interface $ \eta $ of two different media $ Q_1, Q_2 $. When the speed of the wave is anisotropic (i.e., direction-dependent), this effect can be modeled using two Finsler metrics $ F_1,F_2 $ whose unit vectors represent the wave velocity (see \cite{MP23}). In the isotropic case, $ F_1,F_2 $ reduce to Riemannian norms and one recovers the classical version of Snell's law. Here we will go one step further by endowing $ Q_1,Q_2 $ with two cone structures $ \C^1, \C^2 $ (or any choice of two compatible Lorentz-Finsler metrics $ L_1,L_2 $), as described above, with the aim of obtaining a time-dependent anisotropic Snell's law.

In general, Snell's law arises naturally from Fermat's principle: the change in the direction of propagation at $ \eta $ occurs to ensure that the wave trajectories are always critical points of the arrival time functional. Therefore, our goal in this section will be to find these critical points, establishing a generalized Fermat's principle when there is a discontinuity in the cone structure.

\subsection{Snell's law of refraction}
\label{sec:snell}
We will follow here the approaches presented in \cite{PP98}, \cite[\S~7]{CJS14} and \cite{P06}, which provide different generalizations of the classical Fermat's principle: the first one establishes this principle in Lorentzian spacetimes when the source $ P $ is a spacelike submanifold, the second one when the arrival curve $ \alpha $ is an arbitrary embedded curve, and the third one generalizes the setting to Finsler spacetimes, but with the restriction that $ \alpha $ must be timelike. Our result will encompass all these cases and will be even more general, allowing for the cone structure to be discontinuous and for $ P $ to have any causal character (a priori). We begin by characterizing the admissible variational vector fields in the following proposition.

\begin{prop}
\label{prop:variational_field}
Let $ \gamma \in \mathcal{N}_{P,\alpha} $ and $ Z \in \mathfrak{X}(\gamma) $:
\begin{itemize}
\item[(I)] If $ Z $ is an admissible variational vector field, then $ Z(a) \in T_{\gamma(a)}P $, $ Z(\tau) \in T_{\gamma(\tau)}\eta $, $ Z(b) $ is proportional to $ \dot{\alpha}(\T[\gamma]) $ and
\begin{equation}
\label{eq:lemma}
\begin{split}
g^{L_1}_{\dot{\gamma}}(\dot{\gamma},\leftindex^1{D}_{\gamma}^{\dot{\gamma}} Z) & = 0, \quad \forall t \in [a,\tau], \\
g^{L_2}_{\dot{\gamma}}(\dot{\gamma},\leftindex^2{D}_{\gamma}^{\dot{\gamma}}Z) & = 0, \quad \forall t \in [\tau,b],
\end{split}
\end{equation}
for any Lorentz-Finsler metrics $ L_1, L_2 $ compatible with $ \C^1, \C^2 $, respectively, where $ \leftindex^{\mu}{D} $ refers to the covariant derivative of the Chern anisotropic connection of $ (Q_{\mu},L_{\mu}) $ (recall \S~\ref{subsec:chern}).
\item[(II)] The converse of \textup{(I)} holds when $ \gamma $ is transverse to $ \eta $ and $ \dot{\gamma}(b) \not\perp_{\C^2} \dot{\alpha}(\T[\gamma]) $.\footnote{This means that $ \dot{\alpha}(\T[\gamma]) \not\in \dot{\gamma}(b)^{\perp_{\C^2}} $, i.e. $ \dot{\alpha}(\T[\gamma]) \not\in T_{\dot{\gamma}(b)}\C_{\gamma(b)}^2 $. Equivalently, $ \dot{\gamma}(b) \not\perp_{L_2} \dot{\alpha}(\T[\gamma]) $ for any Lorentz-Finsler metric $ L_2 $ compatible with $ \C^2 $ (recall the orthogonality notions in \S~\ref{subsec:orthogonality}). Note that this condition automatically holds when $ \dot{\alpha}(\T[\gamma]) $ is $ \C^2 $-timelike.}
\end{itemize}
\end{prop}
\begin{proof}
If $ Z \in \mathfrak{X}(\gamma) $ is the variational vector field associated with an admissible variation $ \Lambda(\omega,t) = \gamma_{\omega}(t) $, then $ \gamma_{\omega}(a) \in P $, $ \gamma_{\omega}(\tau) \in \eta $ and $ \gamma_{\omega}(b) \in \textup{Im}(\alpha) $ for all $ \omega \in (-\varepsilon,\varepsilon) $, by Definition~\ref{def:variation}, which directly implies that $ Z(a) \in T_{\gamma(a)}P $, $ Z(\tau) \in T_{\gamma(\tau)}\eta $ and $ Z(b) $ is proportional to $ \dot{\alpha}(\T[\gamma]) $, respectively. Moreover, since $ \gamma_{\omega} $ is lightlike for all $ \omega $, we have on each $ Q_{\mu} $ (using \eqref{eq:fund_tensor_1}):
\begin{equation*}
0 = L_{\mu}(\dot{\gamma}_{\omega}) = g^{L_{\mu}}_{\dot{\gamma}_{\omega}}(\dot{\gamma}_{\omega},\dot{\gamma}_{\omega}), \quad \mu=1,2,
\end{equation*}
for any Lorentz-Finsler metric $ L_{\mu} $ compatible with $ \C^{\mu} $, and taking derivatives on both sides with respect to $ \omega $:
\begin{equation*}
\begin{split}
0 & = \left. \frac{\partial}{\partial\omega} g^{L_{\mu}}_{\dot{\gamma}_{\omega}}(\dot{\gamma}_{\omega},\dot{\gamma}_{\omega}) \right\rvert_{\omega=0} = \left. 2g^{L_{\mu}}_{\dot{\gamma}_{\omega}}(\dot{\gamma}_{\omega},\leftindex^{\mu}{D}_{\zeta_t}^{\dot{\gamma}_{\omega}}\dot{\gamma}_{\omega}) \right\rvert_{\omega=0} \\
& = 2g^{L_{\mu}}_{\dot{\gamma}}(\dot{\gamma},\leftindex^{\mu}{D}_{\gamma}^{\dot{\gamma}}Z), \quad \mu=1,2,
\end{split}
\end{equation*}
where $ \zeta_t $ is the curve $ \zeta_t: \omega \mapsto \gamma_\omega(t) $ and we have used that $ \leftindex^{\mu}{D}_{\zeta_t}^{\dot{\gamma}_{\omega}} \dot{\gamma}_{\omega} = \leftindex^{\mu}{D}_{\gamma_{\omega}}^{\dot{\gamma}_{\omega}} \dot{\zeta}_t $ (see \cite[Proposition~3.2]{J14}) and $ \dot{\zeta}_t(0) = Z(t) $. This way, we arrive at \eqref{eq:lemma}.

To prove (II), let $ \gamma $ be transverse to $ \eta $ and assume that $ \dot{\gamma}(b) \not\perp_{\C^2} \dot{\alpha}(\T[\gamma]) $. Then, given a vector field $Z$ satisfying the conditions in the second part of (I), we will construct an admissible variation $\Lambda$ of $\gamma$ whose associated variational vector field is $Z$. For this purpose, let us set a collection of  coordinate charts $(\Omega _1, \varphi_1),\ldots, (\Omega_l,\varphi_l)$ with the following properties:
\begin{enumerate}
\item[(i)] $\gamma(a)\in\Omega_1$ and in this chart it holds that $ P \equiv \{x^0_1=\ldots=x^{r-1}_1=0\} $ around $ \gamma(a) $.
\item[(ii)] The domains $\Omega_1,\ldots,\Omega_l$ cover $\gamma$, and there is no subcollection that covers $\gamma$. Moreover, each  $\Omega_i$ is intersected only by the previous and the next domains in the collection. 
\item[(iii)] The intersection of $\gamma$ with $\Omega_i\cap\Omega_{i+1}$ is connected  for all $i=1,\ldots,l-1$.
\item[(iv)] There is only one domain $\Omega_j$ that intersects $\eta$, and in this chart $\eta \equiv \{ x^1_j=0 \} $ around $ \gamma(\tau) $.
\item[(v)] $\Omega_l$ is the only domain that intersects $ \alpha $ in such a way that $\alpha$ is an integral curve of $ \partial_{x^0_l} $ around $ \gamma(b) $.
\item[(vi)] All the domains satisfy that $ \dot{\gamma} \not\perp_{\C^\mu} \partial_{x^0_i} $  everywhere in its intersection with $Q_\mu$ (this condition  on $ \partial_{x^0_i} $ is compatible at $ \gamma(b) $ with the fact that $\alpha$ is an integral curve of $\partial_{x^0_l}$  thanks to the non-orthogonality hypothesis on $ \alpha $).
\end{enumerate}
Writing the coordinates of $ \gamma$ as $ (\gamma^0_i,\gamma^1_i,\ldots,\gamma^n_i) = (\gamma^0_i,\tilde{\gamma}_i) $ and the coordinates of $ Z$ as $ (Z^0_i,Z^1_i,\ldots,Z^n_i) = (Z^0_i,\tilde{Z}_i) $ in the chart $(\Omega_i,\varphi_i)$, note that $ \gamma^0_1(a) = \ldots = \gamma^{r-1}_1(a) = 0 $, $ Z^0_1(a) = \ldots = Z^{r-1}_1(a) = 0 $.  Choose $a_1\in [a,b]$ such that $\gamma(a_1)\in \Omega_1\cap\Omega_2$ and $\gamma$ is smooth on $a_1$. We can construct the following variation $ \tilde{\Lambda}_1 $ of the projection $ \tilde{\gamma}_1|_{[a,a_1]} $ on the hyperplane $ \{x^0_1=0\} \subset \{0\} \times \mathds{R}^n $:
\begin{equation}
\label{eq:base_variation}
\begin{array}{crcl}
\tilde{\Lambda}_1 \colon & (-\varepsilon,\varepsilon) \times [a,a_1] & \longrightarrow & \{x^0_1=0\} \\
& (\omega,t) & \longmapsto & \tilde{\Lambda}_1(\omega,t) \coloneqq \tilde{\gamma}_1(t)+\omega \tilde{Z}_1(t),
\end{array}
\end{equation}
whose associated variational vector field is $ \tilde{Z}_1|_{[a,a_1]} $. Up to taking a smaller $ \varepsilon $, this variation $ \tilde{\Lambda}_1 $ can be lifted to a unique variation $ \Lambda_1 = (\Lambda_1^0,\tilde{\Lambda}_1): (-\varepsilon,\varepsilon) \times [a,a_1] \rightarrow  \varphi_1(\Omega_1) \subset \mathds{R}^{n+1} $ of $ \gamma|_{[a,a_1]} $ by lightlike curves. Indeed, let us consider a Lorentz-Finsler metric $L_1$ associated with the cone structure $\C^1$.  Writing $\tilde \gamma_{1\omega}(t):= \tilde \Lambda_1(\omega,t) $ and $\gamma_{1\omega}(t):= \Lambda_1(\omega,t) $, if every curve of the variation must be lightlike, then $ \gamma_{1\omega}^0(t) :=\Lambda_1^0(\omega,t) $ is determined by the differential equation
\begin{equation}
\label{eq:diff1}
L_1(\dot{\gamma}_{1\omega}(t)) = 0, \quad \forall t \in [a,a_1],
\end{equation}
with the initial conditions  $ \gamma_{1\omega}^0(a) = 0 $ and $\dot \gamma_{1\omega}^0(a)|_{\omega=0}=\dot\gamma^0(a)$. Moreover, we can also determine the initial condition $\dot \gamma_{1\omega}^0(a)$ for $\omega\not=0$ by lifting the vectors $\dot{\tilde \gamma}_{1\omega}(a)$ to lightlike vectors in a continuous way. Indeed, using \eqref{eq:fund_tensor_1} (in coordinates),
\begin{equation*}
\left. \frac{\partial L_1}{\partial y^0_1}(\dot{\gamma}_{1\omega}(t)) \right\rvert_{\omega=0} = \left. 2  g^{L_1}_{\dot{\gamma}_{1\omega}}(\dot{\gamma}_{1\omega}(t),\partial_{x^0}) \right\rvert_{\omega=0} \not= 0, \quad \forall t \in [a,a_1],
\end{equation*}
by item (vi). Therefore, by the implicit function theorem, we can solve the equation in coordinates $L_1((x^0,x^1,\ldots,x^n),(y^0,y^1,\ldots,y^n))=0$, where
$(x^0,x^1,\ldots,x^n),(y^0,y^1,\ldots,y^n)\in \mathds{R}\times \mathds{R}^n\times \mathds{R}\times \mathds{R}^n$, to get $ y^0 $ as a function of $(x^0,x^1,\ldots,x^n,y^1,\ldots,y^n)$ around $(\gamma^0_1(t),\tilde\gamma_1(t),\dot{\tilde\gamma}_1(t))$. In particular, taking $t=a$ we get the initial conditions  $\dot \gamma_{1\omega}^0(a)$ for $\omega$ small enough. Moreover, this shows that \eqref{eq:diff1} is a Picard type equation with a smooth dependence on $\omega$, and as a consequence, there is a unique maximal solution $\gamma^0_{1\omega}$ for every $\omega$ small enough. If there is any break $t_i$ on $[a,a_1]$, one can lift the curves until that point and then take as initial values for $ \gamma_{1\omega}^0(t_i)$, the obtained values with the variation of the previous smooth piece. 

It remains to prove that the variational vector field $ W $ associated with $ \Lambda_1 $ coincides with $ Z|_{[a,a_1]}$. Let $(W^0_1,\tilde W_1)$ be the coordinates of $W$ in $(\Omega,\varphi_1)$. Obviously, $ \tilde{W}_1 $ coincides with $\tilde{Z}_1 $ by construction and note also that  $ W^0_1(a) = Z^0_1(a) = 0 $ (due to the initial condition $ \gamma_{1\omega}^0(a) = 0 $). Now, since $ W $ is an admissible variational vector field, it satisfies \eqref{eq:lemma}, applying the first part of this proposition. Developing the first equation, using \eqref{eq:covariant}, we obtain
\begin{equation}
\label{eq:variational_field}
\begin{split}
0 & = g^{L_1}_{\dot{\gamma}}(\dot{\gamma},\leftindex^1{D}_{\gamma}^{\dot{\gamma}}W) = g^{L_1}_{\dot{\gamma}}(\dot{\gamma},\dot W^k_1\partial_{x^k_1}) + W^i_1 \dot{\gamma}^j_1 \Gamma^k_{\ ij}(\dot{\gamma}) g^{L_1}_{\dot{\gamma}}(\dot{\gamma},\partial_{x^k_1}) \\
& = \frac{1}{2}\frac{\partial L_1}{\partial y^i_1}(\dot{\gamma})\dot{W}^i_1+\frac{1}{2} W^i_1 \dot{\gamma}^j_1 \Gamma^k_{\ ij}(\dot{\gamma}) \frac{\partial L_1}{\partial y^k_1}(\dot{\gamma}), \quad \forall t \in [a,a_1],
\end{split}
\end{equation}
and taking into account the non-orthogonality condition
\begin{equation*}
\frac{\partial L_1}{\partial y^0_1}(\dot{\gamma}) =  2  g^{L_1}_{\dot{\gamma}}(\dot{\gamma},\partial_{x^0_1}) \not= 0, \quad \forall t \in [a,a_{1}],
\end{equation*}
we can explicitly solve \eqref{eq:variational_field} for $ \dot{W}^0_1 $ on $ [a,a_{1}] $. By hypothesis, $ Z $ also satisfies \eqref{eq:lemma} and thus, $ W^0_1|_{[a,a_1]} $ and $ Z^0_1|_{[a,a_{1}]} $  solve  the same differential equation with the same initial condition, so $ W^0_1|_{[a,a_{1}]} = Z^0_1|_{[a,a_{1}]} $  and hence $ W = Z $ on $[a,a_{1}]$.

Following the same steps, we can now construct another variation $ \tilde{\Lambda}_2 $ of the projection  $ \tilde{\gamma}_2|_{[a_1,a_2]} $ on $ \{x^0_2=0\} $ for the chart $(\Omega_2,\varphi_2)$, with $a_2\in \Omega_2\cap\Omega_3$ whenever $\tau\notin \Omega_2$ or $a_2=\tau$ in the opposite case, having $ \tilde{Z}_2|_{[a_1,a_2]} $ as the associated variational vector field. Then we can lift this variation $ \tilde{\Lambda}_2 $ to a variation by lightlike curves, using $\Lambda_1(\cdot,a_1)$ as initial value at $\omega $ (in any case, $ a_1 $ can be regarded as a break).
 
Finally, we can proceed inductively with all the charts, taking $ \tau $ as the endpoint of the interval when $ \gamma(\tau) \in \Omega_{j}$. Note that the fact that $ \gamma $ is transverse to $ \eta $ ensures that all the variational curves $ \gamma_{\omega} $ constructed from \eqref{eq:base_variation} belong to $ \mathcal{N}_{P,\alpha} $ for sufficiently small $ \varepsilon $ (otherwise, if $ \gamma $ were tangent to $ \eta $, the curves $ \gamma_{\omega} $ might cross the interface more than once, failing to remain topologically transverse). Hence, by items (i), (iv), (v) and (vi), we obtain an admissible variation $\Lambda$ of $\gamma$ whose associated variational vector field, by construction, is $ Z $.
\end{proof}

Now we are ready to characterize the critical points of the arrival time functional $ \T $, stating the generalized Fermat's principle.

\begin{lemma}
\label{lem:Z(t_0)}
Let $ \gamma \in \mathcal{N}_{P,\alpha} $. Then, $ \gamma $ is a critical point of $ \T $ if and only if $ Z(b) = 0 $ for every admissible variational vector field $ Z \in \mathfrak{X}(\gamma) $.
\end{lemma}
\begin{proof}
Let $ Z \in \mathfrak{X}(\gamma) $ be any admissible variational vector field. Then
\begin{equation}
\label{eq:Z(t_0)}
Z(b) = \left. \frac{d}{d\omega} \gamma_{\omega}(b) \right\rvert_{\omega=0} = \left. \frac{d}{d\omega} \alpha(\T[\gamma_{\omega}]) \right\rvert_{\omega=0} = \left. \frac{d}{d\omega} \T[\gamma_{\omega}] \right\rvert_{\omega=0} \dot{\alpha}(\T[\gamma]),
\end{equation}
and since $ \alpha $ is embedded (so, in particular, it must be regular), we conclude that
\begin{equation*}
Z(b) = 0 \Leftrightarrow \left. \frac{d}{d\omega} \T[\gamma_{\omega}] \right\rvert_{\omega=0} = 0.
\end{equation*}
\end{proof}

\begin{thm}[Fermat's principle in cone structures with a jump discontinuity]
\label{thm:snell}
Let $ \gamma \in \mathcal{N}_{P,\alpha} $ such that $ \dot{\gamma}(b) \not\perp_{\C^2} \dot{\alpha}(\T[\gamma]) $:
\begin{itemize}
\item[(I)] If $ \dot{\gamma}(a) \perp_{\C^1} P $, $ \gamma|_{[a,\tau]} $ is a cone geodesic of $ \C^1 $, $ \gamma|_{[\tau,b]} $ is a cone geodesic of $ \C^2 $ and
\begin{equation}
\label{eq:snell}
\dot{\gamma}(\tau^-)^{\perp_{\C^1}} \cap T_{\gamma(\tau)}\eta \subset \dot{\gamma}(\tau^+)^{\perp_{\C^2}} \cap T_{\gamma(\tau)}\eta,
\end{equation}
then $ \gamma $ is a critical point of $ \T $.
\item[(II)] When $ \gamma $ is cone-transverse to $ \eta $,\footnote{Recall Definition~\ref{def:arrival_time} for the notion of cone-transversality.} the converse of \textup{(I)} holds with equality in \eqref{eq:snell}.
\end{itemize}
Moreover, the condition $ \dot{\gamma}(a) \perp_{\C^1} P $ imposes the following restrictions on $ P $ (and its codimension $ r \geq 1 $):\footnote{When $ P $ is a single point, $ \dot{\gamma}(a) \perp_{\C^1} P $ holds trivially and no restriction on $ P $ appears.}
\begin{itemize}
\item $ P $ cannot be $ \C^1 $-timelike at $ \gamma(a) $.
\item If $ P $ is $ \C^1 $-lightlike at $ \gamma(a) $, then $ \dot{\gamma}(a) \in T_{\gamma(a)}P $.
\item If $ P $ is $ \C^1 $-spacelike at $ \gamma(a) $, then $ r > 1 $.
\end{itemize}
\end{thm}
\begin{proof}
Let $ L_{\mu} $ be any Lorentz-Finsler metric compatible with $ \C^{\mu} $ ($ \mu = 1, 2 $). By Remark~\ref{rem:orth}(2) and Theorem~\ref{thm:cone_geodesics}, note that the hypotheses in (I) are equivalent to $ \dot{\gamma}(a) \perp_{L_1} P $, $ \gamma|_{[a,\tau]} $ is a lightlike pregeodesic of $ L_1 $, $ \gamma|_{[\tau,b]} $ is a lightlike pregeodesic of $ L_2 $ and
\begin{equation}
\label{eq:snell_L}
\dot{\gamma}(\tau^-)^{\perp_{L_1}} \cap T_{\gamma(\tau)}\eta \subset \dot{\gamma}(\tau^+)^{\perp_{L_2}} \cap T_{\gamma(\tau)}\eta,
\end{equation}
meaning that, for any $ z \in T_{\gamma(\tau)}\eta $, $ g^{L_1}_{\dot{\gamma}(\tau^-)}(\dot{\gamma}(\tau^-),z) = 0 $ implies $ g^{L_2}_{\dot{\gamma}(\tau^+)}(\dot{\gamma}(\tau^+),z) = 0 $.

Therefore, to prove (I) it is enough to see that these properties (in terms of $ L_{\mu} $) imply that $ \gamma $ is a critical point of $ \T $. Since this is independent of the parametrization of $ \gamma $ (recall Remark~\ref{rem:parametrization}(3)), we can assume, without loss of generality, that $ \gamma|_{[a,\tau]} $ and $ \gamma|_{[\tau,b]} $ are in fact geodesics of $ L_1 $ and $ L_2 $, respectively. Then, for any admissible variational vector field $ Z $:
\begin{equation*}
\begin{split}
0 & = \left. \frac{\partial}{\partial\omega} g^{L_{\mu}}_{\dot{\gamma}_{\omega}}(\dot{\gamma}_{\omega},\dot{\gamma}_{\omega}) \right\rvert_{\omega=0} = 2 g^{L_{\mu}}_{\dot{\gamma}}(\dot{\gamma},\leftindex^{\mu}{D}_{\gamma}^{\dot{\gamma}} Z) = \\
& = 2 \left( \frac{d}{dt} g^{L_{\mu}}_{\dot{\gamma}}(\dot{\gamma},Z) - g^{L_{\mu}}_{\dot{\gamma}}(\leftindex^{\mu}{D}_{\gamma}^{\dot{\gamma}} \dot{\gamma},Z) \right) = 2 \frac{d}{dt} g^{L_{\mu}}_{\dot{\gamma}}(\dot{\gamma},Z), \quad \mu=1,2,
\end{split}
\end{equation*}
where we have used \eqref{eq:g_compat} and, in the last step, $ \leftindex^{\mu}{D}_{\gamma}^{\dot{\gamma}} \dot{\gamma} = 0 $ because $ \gamma $ is a geodesic of $ L_{\mu} $. Therefore,
\begin{equation}
\label{eq:constant}
\begin{split}
g^{L_1}_{\dot{\gamma}}(\dot{\gamma},Z) & = c_1 \in \mathds{R}, \quad \forall t \in [a,\tau], \\
g^{L_2}_{\dot{\gamma}}(\dot{\gamma},Z) & = c_2 \in \mathds{R}, \quad \forall t \in [\tau,b].
\end{split}
\end{equation}
Now, observe that $ Z(a) \in T_{\gamma(a)}P $ (by Proposition~\ref{prop:variational_field}(I)) and $ \dot{\gamma}(a) \perp_{L_1} P $, so $ \dot{\gamma}(a) \perp_{L_1} Z(a) $ and thus, $ c_1 = 0 $. Also, since $ Z(\tau) \in T_{\gamma(\tau)}\eta $ (again by Proposition~\ref{prop:variational_field}(I)), we can apply \eqref{eq:snell_L} to get
\begin{equation*}
0 = g^{L_1}_{\dot{\gamma}(\tau^-)}(\dot{\gamma}(\tau^-),Z(\tau)) = g^{L_2}_{\dot{\gamma}(\tau^+)}(\dot{\gamma}(\tau^+),Z(\tau))
\end{equation*}
and hence, $ c_2 = 0 $. In particular, $ g^{L_2}_{\dot{\gamma}(b)}(\dot{\gamma}(b),Z(b)) = 0 $. Using \eqref{eq:Z(t_0)}, we obtain
\begin{equation*}
\left. \frac{d}{d\omega} \T[\gamma_{\omega}] \right\rvert_{\omega=0} g^{L_2}_{\dot{\gamma}(b)}(\dot{\gamma}(b),\dot{\alpha}(\T[\gamma])) = 0,
\end{equation*}
and since $ \dot{\gamma}(b) \not\perp_{L_2} \dot{\alpha}(\T[\gamma]) $ (because $ \dot{\gamma}(b) \not\perp_{\C^2} \dot{\alpha}(\T[\gamma]) $ by hypothesis), we conclude that $ \gamma $ is a critical point of $ \T $.

To prove (II), assume now that $ \gamma $ is cone-transverse to $ \eta $ and a critical point of $ \T $, with breaks at $ \{t_i\}_{i=1}^k \subset (a,b) $. By Lemma~\ref{lem:Z(t_0)}, this means that $ Z(b) = 0 $ for every admissible variational vector field $ Z $. Now, fix an auxiliary vector field $ U \in \mathfrak{X}(\gamma) $ such that $ U(\tau) \in T_{\gamma(\tau)}\eta $, $ U(b) $ is proportional to $ \dot{\alpha}(\T[\gamma]) $ and $ \dot{\gamma} \not\perp_{L_{\mu}} U $ everywhere (such a vector field exists thanks to the hypotheses $ \dot{\gamma}(b) \not\perp_{L_2} \dot{\alpha}(\T[\gamma]) $ and the cone-transversality of $ \gamma $, which implies $ \dot{\gamma}(\tau^-), \dot{\gamma}(\tau^+) \not\perp_{L_{\mu}} T_{\gamma(\tau)}\eta $; recall Remark~\ref{rem:c_transverse}),\footnote{To obtain explicitly the vector field $U$, first observe that we can make a continuous choice of one of the two half-spaces determined by $\dot\gamma(t)^{\perp_{\C^{\mu}}}$. By choosing local representatives for $U$ in small intervals within the selected half-spaces we can then use a partition of unity of $[a,b]$ to obtain a global smooth choice for $U$.} and consider the following set of vector fields along $ \gamma $:
\begin{equation*}
\mathcal{V} \coloneqq \{ W \in \mathfrak{X}(\gamma): W(a) \in T_{\gamma(a)}P, W(\tau) \in T_{\gamma(\tau)}\eta \text{ and } W(b) = 0 \}.
\end{equation*}
Given any $ W \in \mathcal{V} $, we define
\begin{equation}
\label{eq:Z_W}
Z_W(t) \coloneqq \left\lbrace
\begin{array}{l}
W(t) + f_1(t) U(t), \quad t \in [a,\tau], \\
W(t) + f_2(t) U(t), \quad t \in (\tau,b],
\end{array} \right.
\end{equation}
where $ f_1 $ and $ f_2 $ are chosen so that they satisfy the differential equation given by \eqref{eq:lemma}:
\begin{equation*}
0 = g^{L_{\mu}}_{\dot{\gamma}}(\dot{\gamma},\leftindex^{\mu}{D}_{\gamma}^{\dot{\gamma}} Z_W) = g^{L_{\mu}}_{\dot{\gamma}}(\dot{\gamma},\leftindex^{\mu}{D}_{\gamma}^{\dot{\gamma}} W) + g^{L_{\mu}}_{\dot{\gamma}}(\dot{\gamma},U) \dot{f}_{\mu} + g^{L_{\mu}}_{\dot{\gamma}}(\dot{\gamma},\leftindex^{\mu}{D}_{\gamma}^{\dot{\gamma}} U) f_{\mu},
\end{equation*}
with the additional conditions $ f_1(a) = 0 $ and $ f_1(\tau) = f_2(\tau) $. Namely:
\begin{equation}
\label{eq:f}
\begin{split}
f_1(t) & \coloneqq - e^{-\rho_1(t)} \int_a^t \frac{g^{L_1}_{\dot{\gamma}}(\dot{\gamma},\leftindex^1{D}_{\gamma}^{\dot{\gamma}}W)}{g^{L_1}_{\dot{\gamma}}(\dot{\gamma},U)} e^{\rho_1} ds, \\
f_2(t) & \coloneqq - e^{-\rho_2(t)} \left( \int_a^{\tau} \frac{g^{L_1}_{\dot{\gamma}}(\dot{\gamma},\leftindex^1{D}_{\gamma}^{\dot{\gamma}}W)}{g^{L_1}_{\dot{\gamma}}(\dot{\gamma},U)} e^{\rho_1} ds + \int_{\tau}^t \frac{g^{L_2}_{\dot{\gamma}}(\dot{\gamma},\leftindex^2{D}_{\gamma}^{\dot{\gamma}}W)}{g^{L_2}_{\dot{\gamma}}(\dot{\gamma},U)} e^{\rho_2} ds \right),
\end{split}
\end{equation}
with
\begin{equation*}
\rho_1(t) \coloneqq \int_a^t \frac{g^{L_1}_{\dot{\gamma}}(\dot{\gamma},\leftindex^1{D}_{\gamma}^{\dot{\gamma}}U)}{g^{L_1}_{\dot{\gamma}}(\dot{\gamma},U)} ds, \quad  \rho_2(t) \coloneqq \rho_1(\tau) + \int_{\tau}^t \frac{g^{L_2}_{\dot{\gamma}}(\dot{\gamma},\leftindex^2{D}_{\gamma}^{\dot{\gamma}}U)}{g^{L_2}_{\dot{\gamma}}(\dot{\gamma},U)} ds.
\end{equation*}
This way, $ Z_W $ satisfies all the hypotheses in Proposition~\ref{prop:variational_field}(II). When $\gamma$ is not tangent to $\eta$, this guarantees that $Z_W$ is admissible for all $W\in\mathcal{V}. $ Otherwise, we can generate enough admissible variational vector fields $Z_W$ for our purposes. First, we can assume that in a certain chart of $\gamma(\tau)$, the vector field $U$ along $\gamma$ is parallel to $\eta$ (with respect to the chosen coordinates). Then, for every vector field $W\in\mathcal{V}$ such that, close to $\eta$, it is either parallel to $ \eta $ or points out to the opposite side of $\eta$, the vector field $Z_W$ is admissible by a direct application of \eqref{eq:base_variation}. In particular, if $W$ is constantly equal to a vector $u\in T_{\gamma(\tau)}\eta$ close to $\tau$ (in particular if it is zero), then $Z_W$ is admissible. Let us denote
\begin{equation}\label{tildeV}
\tilde{\mathcal{V}} \coloneqq \{W\in\mathcal{V}: \text{$Z_W$ is admissible} \}.
\end{equation}
As said above, $\tilde{\mathcal{V}}$ includes all $W\in\mathcal{V}$ which in the chosen chart are parallel to $\eta$ or identically zero. As $ \gamma $ is a critical point of $ \T $, $ Z_W(b) = 0 $ by Lemma~\ref{lem:Z(t_0)} and thus, $ f_2(b) = 0 $, i.e.
\begin{equation}
\label{eq:integrals}
\int_a^{\tau} \frac{g^{L_1}_{\dot{\gamma}}(\dot{\gamma},\leftindex^1{D}_{\gamma}^{\dot{\gamma}}W)}{g^{L_1}_{\dot{\gamma}}(\dot{\gamma},U)} e^{\rho_1} ds + \int_{\tau}^{b} \frac{g^{L_2}_{\dot{\gamma}}(\dot{\gamma},\leftindex^2{D}_{\gamma}^{\dot{\gamma}}W)}{g^{L_2}_{\dot{\gamma}}(\dot{\gamma},U)} e^{\rho_2} ds = 0.
\end{equation}
Choosing $ W\in\tilde{\mathcal{V}} $ so that $ W(t_i) = 0 $ at any break $ t_i \not= \tau $ and applying integration by parts (using also that $ W(b) = 0 $), \eqref{eq:integrals} becomes
\begin{equation*}
\begin{split}
0 & = \int_a^{\tau} g^{L_1}_{\dot{\gamma}}(\leftindex^1{D}_{\gamma}^{\dot{\gamma}}(\dot{\gamma} \phi_1),W) ds + \int_{\tau}^{b} g^{L_2}_{\dot{\gamma}}(\leftindex^2{D}_{\gamma}^{\dot{\gamma}}(\dot{\gamma} \phi_2),W) ds \\
& + g^{L_1}_{\dot{\gamma}(\tau^-)}(\dot{\gamma}(\tau^-),W(\tau)) \phi_1(\tau^-) - g^{L_2}_{\dot{\gamma}(\tau^+)}(\dot{\gamma}(\tau^+),W(\tau)) \phi_2(\tau^+) \\
& - g^{L_1}_{\dot{\gamma}(a)}(\dot{\gamma}(a),W(a)) \phi_1(a),
\end{split}
\end{equation*}
where $ \phi_{\mu} \coloneqq e^{\rho_{\mu}}/g^{L_{\mu}}_{\dot{\gamma}}(\dot{\gamma},U) $ for $ \mu=1,2 $. Since $ W $ is arbitrary outside the breaks and endpoints (indeed, as said above, it is enough to choose it as zero close to $\gamma(\tau)$), we obtain the following conditions:
\begin{equation}
\label{eq:conditions}
\begin{split}
0 & = \leftindex^1{D}_{\gamma}^{\dot{\gamma}}(\dot{\gamma} \phi_1), \quad \forall t \in (a,\tau) \setminus \{t_i\}_{i=1}^k, \\
0 & = \leftindex^2{D}_{\gamma}^{\dot{\gamma}}(\dot{\gamma} \phi_2), \quad \forall t \in (\tau,b) \setminus \{t_i\}_{i=1}^k, \\
0 & = g^{L_1}_{\dot{\gamma}(a)}(\dot{\gamma}(a),W(a)) \phi_1(a), \quad \forall W \in \tilde{\mathcal{V}},\\
0 & = g^{L_1}_{\dot{\gamma}(\tau^-)}(\dot{\gamma}(\tau^-),W(\tau)) \phi_1(\tau^-) \\
& - g^{L_2}_{\dot{\gamma}(\tau^+)}(\dot{\gamma}(\tau^+),W(\tau)) \phi_2(\tau^+), \quad \forall W \in \tilde{\mathcal{V}}.
\end{split}
\end{equation}
The first two equations imply that $ \gamma|_{[a,\tau]} $ and $ \gamma|_{[\tau,b]} $ are piecewise pregeodesics of $ L_1 $ and $ L_2 $, respectively. Indeed, using \eqref{eq:covariant} we get
\begin{equation*}
0 = \leftindex^{\mu}{D}_{\gamma}^{\dot{\gamma}}(\dot{\gamma} \phi_{\mu}) \Leftrightarrow \leftindex^{\mu}{D}_{\gamma}^{\dot{\gamma}}\dot{\gamma} = -\frac{\dot{\phi}_{\mu}}{\phi_{\mu}} \dot{\gamma}, \quad \mu = 1,2,
\end{equation*}
and since $ \phi_{\mu} \not= 0 $, Proposition~\ref{prop:pregeodesic} can be applied on each smooth piece of $ \gamma $. Moreover, for any break $ t_i \not= \tau $, we can choose $ W \in \tilde{\mathcal{V}} $ so that $ W(t_i) \not= 0 $ and $ W $ vanishes at the other breaks. Then, we can apply again integration by parts in \eqref{eq:integrals} and use \eqref{eq:conditions} to obtain the condition
\begin{equation}
\label{eq:breaks}
0 = g^{L_{\mu}}_{\dot{\gamma}(t_i^-)}(\dot{\gamma}(t_i^-),W(t_i)) \phi_{\mu}(t_i^-) - g^{L_{\mu}}_{\dot{\gamma}(t_i^+)}(\dot{\gamma}(t_i^+),W(t_i)) \phi_{\mu}(t_i^+),
\end{equation}
which must hold for any arbitrary $ W(t_i) \in T_{\gamma(t_i)}Q_{\mu} $. In particular, for $ W(t_i) = \dot{\gamma}(t_i^-) $, \eqref{eq:breaks} yields $ \dot{\gamma}(t_i^+) \perp_{L_{\mu}} \dot{\gamma}(t_i^-) $, which means that $ \dot{\gamma}(t_i^+) $ and $ \dot{\gamma}(t_i^-) $ must be proportional to each other (recall Remark~\ref{rem:orth}(3)). Therefore, $ \gamma|_{[a,\tau]} $ and $ \gamma|_{[\tau,b]} $ can be reparametrized as smooth pregeodesics of $ L_1 $ and $ L_2 $, respectively. The third equation in \eqref{eq:conditions} directly implies that $ \dot{\gamma}(a) \perp_{L_1} P $, since $ \phi_1(a) \not= 0 $ and we can freely choose any $ W(a) \in T_{\gamma(a)}P $. Finally, it only remains to prove that \eqref{eq:snell} holds, which is independent of the (positive) parametrization of $ \gamma $. So, we can assume, without loss of generality, that $ \gamma|_{[a,\tau]} $ and $ \gamma|_{[\tau,b]} $ are geodesics of $ L_1 $ and $ L_2 $, respectively. Using \eqref{eq:Z_W}, the last equation in \eqref{eq:conditions} written in terms of $ Z_W $ becomes
\begin{equation}
\label{eq:snell_Z_W}
\begin{split}
0 & = g^{L_1}_{\dot{\gamma}(\tau^-)}(\dot{\gamma}(\tau^-),Z_W(\tau)) \phi_1(\tau^-) - e^{\rho_1(\tau)} f_1(\tau) \\
& - g^{L_2}_{\dot{\gamma}(\tau^+)}(\dot{\gamma}(\tau^+),Z_W(\tau)) \phi_2(\tau^+) + e^{\rho_2(\tau)} f_2(\tau), \quad \forall W \in \tilde{\mathcal{V}}.
\end{split}
\end{equation}
From \eqref{eq:f}, observe that $ e^{\rho_1(\tau)} f_1(\tau) = e^{\rho_2(\tau)} f_2(\tau) $. Moreover, since $ \gamma $ is a geodesic and $ Z_W $ is admissible, \eqref{eq:constant} holds (with $ c_1 = 0 $ because $ Z_W(a) \in T_{\gamma(a)}P $) and therefore
\begin{equation}
\label{eq:g1=0}
g^{L_1}_{\dot{\gamma}(\tau^-)}(\dot{\gamma}(\tau^-),Z_W(\tau)) = 0, \quad \forall W \in \tilde{\mathcal{V}},
\end{equation}
which reduces \eqref{eq:snell_Z_W} to the condition
\begin{equation}
\label{eq:g2=0}
g^{L_2}_{\dot{\gamma}(\tau^+)}(\dot{\gamma}(\tau^+),Z_W(\tau)) = 0, \quad \forall W \in \tilde{\mathcal{V}}.
\end{equation}
Now, for any $ W \in \tilde{\mathcal{V}} $, we have from \eqref{eq:Z_W}:
\begin{equation*}
W(\tau) = Z_W(\tau) - f_1(\tau) U(\tau),
\end{equation*}
and since $ \dot{\gamma} \not\perp_{L_{\mu}} U $ everywhere, by \eqref{eq:g1=0} we deduce that $ \dot{\gamma}(\tau^-) \perp_{L_1} W(\tau) $ if and only if $ f_1(\tau) = 0 $ and thus, because $ f_1(\tau) = f_2(\tau) $, if and only if $ \dot{\gamma}(\tau^+) \perp_{L_2} W(\tau) $ by \eqref{eq:g2=0}. But notice that
\begin{equation*}
T_{\gamma(\tau)}\eta = \lbrace W(\tau): W \in \tilde{\mathcal{V}} \rbrace,
\end{equation*}
so in fact, for any $ z \in T_{\gamma(\tau)}\eta $, $ \dot{\gamma}(\tau^-) \perp_{L_1} z $ if and only if $ \dot{\gamma}(\tau^+) \perp_{L_2} z $, which is equivalent to \eqref{eq:snell_L} with equality:
\begin{equation}
\label{eq:snell_equality}
\dot{\gamma}(\tau^-)^{\perp_{L_1}} \cap T_{\gamma(\tau)}\eta = \dot{\gamma}(\tau^+)^{\perp_{L_2}} \cap T_{\gamma(\tau)}\eta.
\end{equation}

For the last assertion, observe that the condition $ \dot{\gamma}(a) \perp_{\C^1} P $ imposes some restrictions on the causal character of $ P $:
\begin{itemize}
\item If $ P $ is $ \C^1 $-timelike at $ \gamma(a) $ (i.e., there is at least one tangent vector to $ P $ which is $\C^1 $-timelike), then $ \gamma $ cannot be critical because there are no $ \C^1 $-lightlike vectors $ \C^1 $-orthogonal to $ P $ at $ \gamma(a) $.
\item If $ P $ is $ \C^1 $-lightlike at $ \gamma(a) $ (i.e., $ T_{\gamma(a)}P $ is tangent to $ \C^1_{\gamma(a)} $), then $ \dot{\gamma}(a) \perp_{\C^1} P $ is equivalent to $ \dot{\gamma}(a) \in T_{\gamma(a)}P $.
\item If $ P $ is $ \C^1 $-spacelike at $ \gamma(a) $ (i.e., any tangent vector to $ P $ is $ \C^1 $-spacelike), then necessarily $ r > 1 $ for $ \gamma $ to be critical. Otherwise, there are no $ \C^1 $-lightlike vectors $ \C^1 $-orthogonal to $ P $ at $ \gamma(a) $.
\end{itemize}
\end{proof}

\begin{rem}[Snell's law]
Equation~\eqref{eq:snell} describes the change in direction of an incident trajectory as it crosses the interface $ \eta $. Thus, it can be interpreted as a generalized version of {\em Snell's law of refraction} for cone structures. An equivalent formulation of \eqref{eq:snell} is that $ \dot{\gamma}(\tau^+) $ must be $ \C^2 $-orthogonal to $ \dot{\gamma}(\tau^-)^{\perp_{L_1}} \cap T_{\gamma(\tau)}\eta $. In coordinates, taking into account that $ g^{L_{\mu}}_v(v,z) = \frac{1}{2} \frac{\partial L_{\mu}}{\partial y^i}(v)z^i $ (from \eqref{eq:fund_tensor_1}), this law is equivalent to the following condition for $ \dot{\gamma}(\tau^{\pm}) $:
\begin{equation}
\label{eq:snell_coord}
\frac{\partial L_1}{\partial y^i}(\dot{\gamma}(\tau^-))z^i = 0 \Rightarrow \frac{\partial L_2}{\partial y^i}(\dot{\gamma}(\tau^+))z^i = 0, \quad \forall z \equiv (z^0,\ldots,z^n) \in T_{\gamma(\tau)}\eta,
\end{equation}
for any Lorentz-Finsler metric $ L_{\mu} $ compatible with $ \C^{\mu} $.
\end{rem}

\begin{rem}
\label{rem:restrictions}
In order to establish Theorem~\ref{thm:snell}, we have explicitly imposed two restrictions on the base curve $ \gamma $:
\begin{enumerate}
\item The restriction $ \dot{\gamma}(b) \not\perp_{\C^2} \dot{\alpha}(\T[\gamma]) $ (which automatically holds if $ \alpha $ is $ \C^2 $-timelike) comes from Proposition~\ref{prop:variational_field}(II) because the standard procedure used there to construct admissible variations of $ \gamma $ does not work without this non-orthogonality condition. To illustrate that, in general, Theorem~\ref{thm:snell} is not valid if this restriction is removed, consider the following counterexamples:
\begin{itemize}
\item[$ \circ $] Lightlike $ \alpha $: assume that $ \gamma $, being a cone geodesic in each cone structure and satisfying Snell's law \eqref{eq:snell}, coincides with $ \alpha $ along the interval $ [b-\varepsilon,b] $, for some $ \varepsilon > 0 $. In this case, we can easily arrange $ \alpha $ so that there are variations that trivially reduce or increase the arrival time functional, making $ \gamma $ unable to be critical.
\item[$ \circ $] Spacelike $ \alpha $: in the standard Minkowski spacetime (with $ x^0 $ as the time coordinate), let $ P = \{(0,0,0)\} $ and $ \alpha(s) = (1,\cos{s},\sin{s}) $, with $ s \in (0,\pi) $---i.e., $ \alpha $ corresponds to half of the intersection between the hyperplane $ \{x^0=1\} $ and the lightcone from the origin. In this case, all the lightlike geodesics from $ P $ to $ \alpha $ are orthogonal to $ \alpha $, but none of them is critical, since these trajectories themselves define an admissible variation with nonzero derivative (when composed with the arrival time functional).
\end{itemize}
Obviously, this constraint is natural even in the Lorentzian case (see \cite[Theorem~7.4]{CJS14}), although there are some particular situations where it can be dropped (see \cite[Theorem~7.8]{CJS14}).

\item The restriction that $\gamma$ be cone-transverse to $ \eta $ is explicitly used in Theorem~\ref{thm:snell}(II) to construct the auxiliary vector field $ U $, which is an essential part of the proof. In any case, this restriction only applies when $ T_{\gamma(\tau)}\eta $ is $ \C^1 $- or $ \C^2 $-lightlike, as it automatically holds for any other combination of causal characters of $ T_{\gamma(\tau)}\eta $ for $ \C^1 $ and $ \C^2 $.
\end{enumerate}

The framework also involves some implicit restrictions, because the assumption $ \gamma \in \mathcal{N}_{P,\alpha} $ implies the existence of a lightlike curve connecting $ \alpha $ to $ P $ and crossing the interface at $ \gamma(\tau) $. Consequently, $ \alpha $ and $ P $ must be causally related and neither $ \C^1_{\gamma(\tau)} $ nor $ \C^2_{\gamma(\tau)} $ can point entirely toward $ Q_1 $.
\end{rem}

\subsection{The law of reflection}
\label{sec:reflection}
So far, we have focused on trajectories that pass from one medium to another. Now we turn our attention to those that bounce off the interface, remaining in one medium. Hence, keeping the same general setting as in the beginning of \S~\ref{sec:setting}, we first define the set of curves that correspond to these new trajectories (compare with Definition~\ref{def:arrival_time}).

\begin{defi}
\label{def:arrival_time_reflection}
Let $ P \subset Q_1 $ be a $ \C^1 $-spacelike submanifold and $ \alpha: I \rightarrow Q_1 $ an embedded smooth curve, where $ I \subset \mathds{R} $ is an open interval. Fixing an interval $ [a,b] $ and some $ \tau \in (a,b) $, let $ \mathcal{N}^*_{P,\alpha} $ be the set of all (regular) piecewise smooth curves $ \gamma: [a,b] \rightarrow \overline{Q}_1 $ such that:
\begin{itemize}
\item $ \gamma(a) \in P $, $ \gamma(\tau) \in \eta $ and $ \gamma(b) \in \textup{Im}(\alpha) $.
\item $ \gamma $ is topologically transverse to $ \eta $ and $ \C^1 $-lightlike.
\end{itemize}
Then $ \T^*: \mathcal{N}^*_{P,\alpha} \rightarrow I $, defined as in \eqref{eq:arrival_time}, is the {\em arrival time functional via reflection}. In addition, we say that $ \gamma \in \mathcal{N}^*_{P,\alpha} $ is {\em cone-transverse} to $ \eta $ if both $ \dot{\gamma}(\tau^-) $ and $ \dot{\gamma}(\tau^+) $ are $ \C^1 $-transverse to $ \eta $.
\end{defi}

\begin{rem}
\label{rem:double}
Notice that $ Q_2 $ does not play any role here. So, instead of considering $ Q $ as $ \overline{Q}_1 \cup \overline{Q}_2 $, we can reduce the study of the critical points of $ \T^* $ to the previous study of $ \T $ by considering the {\em double manifold} of $ \overline{Q}_1 $, given by the adjunction space $ \overline{Q}_1 \cup_{\textup{Id}} \overline{Q}_1 $, where $ \textup{Id}: \eta \rightarrow \eta $ is the identity map of $ \eta = \partial Q_1 $ (see \cite[Theorem~9.29 and Example~9.32]{L13}). Namely, we attach two copies of $ \overline{Q}_1 $ along their boundaries, identifying each boundary point in one copy with the same boundary point in the other. Each copy of $ Q_1 $ is endowed with the same cone structure $ \C^1 $, although the cones do not need to match continuously on $ \eta $: the two cones at each interface point are symmetric with respect to the tangent plane to $ \eta $.\footnote{Intuitively, the two copies of $ \overline{Q}_1 $ are mirrored with respect to each other.}

With this construction, observe that the set $ \mathcal{N}^*_{P,\alpha} $ in $ \overline{Q}_1 \cup \overline{Q}_2 $ can be identified with $ \mathcal{N}_{P,\alpha^*} $ in $ \overline{Q}_1 \cup_{\textup{Id}} \overline{Q}_1 $, simply considering the copy $ \alpha^* $ of the arrival curve $ \alpha $ in the new $ Q_1 $ that substitutes $ Q_2 $ (so that the reflected trajectories in $ \overline{Q}_1 \cup \overline{Q}_2 $ are identified with refracted ones in $ \overline{Q}_1 \cup_{\textup{Id}} \overline{Q}_1 $). Under this identification, $ \T^*: \mathcal{N}^*_{P,\alpha} \rightarrow I $ in the original setting $ \overline{Q}_1 \cup \overline{Q}_2 $ coincides with $ \T: \mathcal{N}_{P,\alpha^*} \rightarrow I $ in this new setting $ \overline{Q}_1 \cup_{\textup{Id}} \overline{Q}_1 $.
\end{rem}

Taking this observation into account, Theorem~\ref{thm:snell} directly becomes the following analogous result.

\begin{cor}[Fermat's principle for reflection]
\label{thm:reflection}
Let $ \gamma \in \mathcal{N}^*_{P,\alpha} $ such that $ \dot{\gamma}(b) \not\perp_{\C^1} \dot{\alpha}(\T^*[\gamma]) $:
\begin{itemize}
\item[(I)] If $ \dot{\gamma}(a) \perp_{\C^1} P $, $ \gamma|_{[a,\tau]} $ and $ \gamma|_{[\tau,b]} $ are cone geodesics of $ \C^1 $, and
\begin{equation}
\label{eq:reflection}
\dot{\gamma}(\tau^-)^{\perp_{\C^1}} \cap T_{\gamma(\tau)}\eta \subset \dot{\gamma}(\tau^+)^{\perp_{\C^1}} \cap T_{\gamma(\tau)}\eta,
\end{equation}
then $ \gamma $ is a critical point of $ \T^* $.
\item[(II)] When $ \gamma $ is cone-transverse to $ \eta $, the converse of (I) holds with equality in \eqref{eq:reflection}.
\end{itemize}
\end{cor}

\begin{rem}[Law of reflection]
Equation~\eqref{eq:reflection} describes the change in direction of a geodesic reflected at the interface $ \eta $. Thus, it can be interpreted as a generalized version of the {\em law of reflection} for cone structures. An equivalent formulation of \eqref{eq:reflection} is that $ \dot{\gamma}(\tau^+) $ must be $ \C^1 $-orthogonal to $ \dot{\gamma}(\tau^-)^{\perp_{L_1}} \cap T_{\gamma(\tau)}\eta $. In coordinates, this law is equivalent to the following condition for $ \dot{\gamma}(\tau^{\pm}) $:
\begin{equation*}
\frac{\partial L_1}{\partial y^i}(\dot{\gamma}(\tau^-))z^i = 0 \Rightarrow \frac{\partial L_1}{\partial y^i}(\dot{\gamma}(\tau^+))z^i = 0, \quad \forall z \equiv (z^0,\ldots,z^n) \in T_{\gamma(\tau)}\eta,
\end{equation*}
for any Lorentz-Finsler metric $ L_1 $ compatible with $ \C^1 $.
\end{rem}

\begin{rem}
A few relevant comments regarding Corollary~\ref{thm:reflection} are in order:
\begin{itemize}
\item We stress that $ \mathcal{N}^*_{P,\alpha} $ is a quite restrictive set, where the curves are forced to touch the interface. Critical points of the arrival time in this set will not be so, in general, in broader spaces such as the one containing all the $ \C^1 $-causal curves going from $ P $ to $ \alpha $ (see the discussion in \cite[\S~5.2]{MP23}).
\item The condition $ \dot{\gamma}(a) \perp_{\C^1} P $ imposes the same restrictions on $ P $ as in Theorem~\ref{thm:snell}.
\item Even though the wording of the result is completely analogous to that of Theorem~\ref{thm:snell}, we can already anticipate that both phenomena---the refraction and the reflection---are not actually symmetric. For instance, reflected trajectories cannot exist when $ \eta $ is $ \C^1 $-spacelike, whereas refracted trajectories can appear for any causal character of $ \eta $ with respect to $ \C^2 $. This different behaviour, depending on the causal character of $ \eta $, will be addressed in detail next in \S~\ref{sec:existence} (compare Theorem~\ref{thm:existence_refraction} and Corollary~\ref{thm:existence_reflection} below).
\end{itemize}
\end{rem}

\section{Existence and uniqueness of solutions}
\label{sec:existence}
When the cone structures $ \C^1, \C^2 $ represent the infinitesimal propagation of a certain wave in two different media, one usually seeks to determine the wave trajectory departing from a given initial point and direction (see e.g. \cite{JPS21}). Such a trajectory $ \gamma: [a,b] \rightarrow Q $ must satisfy Fermat's principle, i.e. it must be a critical point of the traveltime from $ \gamma(a) $ to any (non-orthogonal) arrival curve $ \alpha $. In principle, this means that $ \gamma $ would be determined by following these steps:
\begin{enumerate}
\item Fix two Lorentz-Finsler metrics $ L_1, L_2 $ compatible with $ \C^1, \C^2 $, respectively. Usually, this choice is made so that the $ L_\mu $-unit vectors represent physical elements related to the measurement of the wave velocities (as in \S~\ref{sec:natural} below).
\item Given $ \gamma(a) $ and $ \dot{\gamma}(a) $, solve \eqref{eq:geod_eq} to obtain the unique geodesic of $ L_1 $ with these initial conditions. Stop the computation once it reaches $ \eta $ at some $ t = \tau $. This determines the incident trajectory $ \gamma|_{[a,\tau]} $, which fixes $ \gamma(\tau) $ and $ \dot{\gamma}(\tau^-) $.
\item Knowing $ \gamma(\tau) $ and $ \dot{\gamma}(\tau^-) $, solve Snell's law \eqref{eq:snell} and/or the law of reflection \eqref{eq:reflection} for $ \dot{\gamma}(\tau^+) $, with the restriction that $ \dot{\gamma}(\tau^+) $ has to point to the $ Q_2 $-side of $ T_{\gamma(\tau)}\eta $ when looking for the refracted trajectory, and to the $ Q_1 $-side when looking for the reflected one (in particular, $ \dot{\gamma}(\tau^+) $ can be tangent to $ \eta $ in both cases). However, we will see in this section that the existence and uniqueness of $ \dot{\gamma}(\tau^+) $ are not guaranteed.
\item Finally, given $ \dot{\gamma}(\tau^+) $, solve \eqref{eq:geod_eq} again to obtain the unique geodesic of $ L_{\mu} $ ($ \mu = 1,2 $ depending on whether we are looking for a reflected or refracted trajectory, respectively) departing from $ \gamma(\tau) $ with initial velocity $ \dot{\gamma}(\tau^+) $. If $ \dot{\gamma}(\tau^+) $ is tangent to $ \eta $, then $ \gamma|_{[\tau,b]} $ might not exist, depending on the shape of $ \eta $. If the resulting trajectory intersects $ \eta $ more than once at isolated points, apply again the previous steps (recall Remark~\ref{rem:parametrization}(1)).
\end{enumerate}

Observe that steps (2) and (4) are straightforward (at least computationally), as they involve solving an ordinary differential equation system with fixed initial conditions. Step (3), on the other hand, deserves a closer look. The aim of this section is, therefore, to determine the cases where we can ensure the existence of refraction and reflection, and whether they are unique. To this end, we will work in the tangent space $ T_p\eta $, for some arbitrary $ p = \gamma(\tau) \in \eta $, and we will talk about directions instead of velocities (vectors), since the cone structure (see Definition~\ref{def:cone_structure}), the causality (see Definition~\ref{def:causality}), the orthogonality and the transversality (see \S~\ref{subsec:orthogonality}) are preserved under positive vector rescalings---hence, so are the solutions to Snell's law and the law of reflection.

\begin{notation}
\label{not:subspaces}
Given any $ p \in \eta $ and $ v \in T_pQ $, by {\em direction} $ v $ we mean the half-ray $ \{\lambda v: \lambda > 0 \} \subset T_pQ $. Also, we will denote by $ Q^1_p, Q^2_p \subset T_pQ $ the two open subspaces of $ T_pQ $ divided by $ T_p\eta $, each one lying on the corresponding side of $ T_p\eta $ (i.e., $ Q^{\mu}_p $ corresponds to the $ Q_{\mu} $-side of $ T_p\eta $).
\end{notation}

\subsection{Initial definitions and results}
\label{subsec:def_res}
First of all, let us start with two lemmas that will prove very useful later. Keep in mind that, for any direction $ v \in \C_p^{\mu} $, $ v^{\perp_{\C^{\mu}}} = T_v\C_p^{\mu} $ (recall Definition~\ref{def:orth}).

\begin{lemma}
\label{lem:Pi}
Let $ \Pi \subset T_p\eta $, with $ p \in \eta $, be a linear subspace of $ \dim(\Pi) = n-1 $. Then:
\begin{itemize}
\item[(i)] If $ \Pi $ is $ \C^{\mu} $-spacelike, there are exactly two directions $ v, w \in \C^{\mu}_p $ such that $ v, w \perp_{\C^{\mu}} \Pi $. In this case, $ T_p\eta $ can have any causal character. Moreover:
\begin{itemize}
\item[$ \circ $] If $ T_p\eta $ is $ \C^{\mu} $-timelike, then either $ v \in Q^1_p $ and $ w \in Q^2_p $, or vice versa.
\item[$ \circ $] If $ T_p\eta $ is $ \C^{\mu} $-lightlike, then either $ v \in T_p\eta $ and $ w \notin T_p\eta $, or vice versa.
\item[$ \circ $] If $ T_p\eta $ is $ \C^{\mu} $-spacelike, then either $ v, w \in Q^1_p $ or $ v, w \in Q^2_p $.
\end{itemize}
\item[(ii)] If $ \Pi $ is $ \C^{\mu} $-lightlike, there exists a unique direction $ v \in \C^{\mu}_p $, necessarily in $ \Pi $ (and thus, tangent to $ \eta $), such that $ v \perp_{\C^{\mu}} \Pi $. In this case, $ T_p\eta $ cannot be $ \C^{\mu} $-spacelike.
\item[(iii)] If $ \Pi $ is $ \C^{\mu} $-timelike, there is no direction $ v \in \C^{\mu}_p $ such that $ v \perp_{\C^{\mu}} \Pi $. In this case, $ T_p\eta $ must be $ \C^{\mu} $-timelike.
\end{itemize}
\end{lemma}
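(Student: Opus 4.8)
The whole statement concerns lightlike directions $ v\in\C^{\mu}_p $ whose tangent hyperplane $ v^{\perp_{\C^{\mu}}}=T_v\C^{\mu}_p $ contains the fixed $(n-1)$-plane $ \Pi $. The plan is to reduce everything to planar convex geometry by passing to the two‑dimensional quotient $ W:=T_pQ/\Pi $ (indeed $ \dim W=2 $, since $ \dim T_pQ=n+1 $ and $ \dim\Pi=n-1 $), with linear projection $ \pi:T_pQ\to W $. The central object is a compact cross‑section of the cone: by \cite[Lemma~2.5]{JS20} there is a timelike one‑form $ \omega $ (so $ \omega>0 $ on $ \overline{A^{\mu}_p}\setminus\{0\} $) for which $ D:=\overline{A^{\mu}_p}\cap\{\omega=1\} $ is a compact strongly convex body with $ \overline{A^{\mu}_p}=\{\lambda d:\lambda\ge0,\,d\in D\} $ and $ \partial D=\C^{\mu}_p\cap\{\omega=1\} $. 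Then $ B:=\pi(D)\subset W $ is a compact convex body with nonempty interior (as $ \pi $ is open), and since $ \pi $ is open a point of $ \partial B $ is never the image of an interior point of $ D $, so any $ d\in D $ with $ \pi(d)\in\partial B $ lies on $ \partial D $ and is therefore lightlike.

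First I would set up the dictionary between solutions and supporting lines. Given a lightlike $ v $ with $ \Pi\subset T_v\C^{\mu}_p $, the hyperplane $ T_v\C^{\mu}_p\supset\Pi $ descends to a genuine line $ \ell:=\pi(T_v\C^{\mu}_p) $ through the origin of $ W $, which supports $ B $ because $ T_v\C^{\mu}_p $ is a supporting hyperplane of the convex cone $ A^{\mu}_p $. Conversely, a supporting line $ \ell $ of $ B $ through $ 0 $ touches $ \partial B $ at a point $ \pi(d) $ with $ d\in\partial D $ lightlike, and $ \pi^{-1}(\ell) $ is then a supporting hyperplane of $ A^{\mu}_p $ containing the lightlike ray of $ d $; by the smoothness and strong convexity of $ \C^{\mu}_p $ (Definition~\ref{def:cone_structure}) it is tangent to the cone along that single ray, so it equals $ T_d\C^{\mu}_p $ and yields exactly one $ v\perp_{\C^{\mu}}\Pi $. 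Thus counting solutions amounts to counting supporting lines of $ B $ through the origin.

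Next I would read off the trichotomy from the position of $ 0 $ relative to $ B $, which is precisely the causal character of $ \Pi $: because $ \omega>0 $ off the apex, $ 0\notin B\Leftrightarrow\Pi\cap\overline{A^{\mu}_p}=\{0\} $ ($ \Pi $ spacelike); $ 0\in\partial B\Leftrightarrow\Pi\cap\overline{A^{\mu}_p}\ne\{0\}\text{ and }\Pi\cap A^{\mu}_p=\emptyset $ ($ \Pi $ lightlike); $ 0\in\operatorname{int}B\Leftrightarrow\Pi\cap A^{\mu}_p\ne\emptyset $ ($ \Pi $ timelike). From an exterior point a convex body with interior has exactly two supporting lines, giving the two directions of case (i); from an interior point it has none, giving case (iii). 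For case (ii), any supporting line through $ 0\in\partial B $ has preimage containing the lightlike ray $ v_0\in\Pi\cap\C^{\mu}_p $, hence by smoothness of the cone must equal $ T_{v_0}\C^{\mu}_p $; so there is a single such line, giving the unique solution $ v_0 $, which lies in $ \Pi\subset T_p\eta $ as claimed.

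Finally I would locate the solutions relative to $ T_p\eta $. Since $ \Pi\subset T_p\eta $, the image $ \ell_\eta:=\pi(T_p\eta) $ is a line through $ 0 $ whose two open half‑planes are $ \pi(Q^1_p) $ and $ \pi(Q^2_p) $, and the causal character of $ T_p\eta $ corresponds to whether $ \ell_\eta $ crosses $ \operatorname{int}B $ (timelike), is tangent to $ B $ (lightlike), or misses $ B $ (spacelike). In case (i) the two solutions sit at the contact points of the two tangent lines from the exterior point $ 0 $; a secant through $ 0 $ separates these contact points (so $ v\in Q^1_p $, $ w\in Q^2_p $), a tangent through $ 0 $ coincides with one tangent line (so one contact point lies on $ \ell_\eta $, i.e.\ one solution is tangent to $ \eta $), and a line through $ 0 $ disjoint from $ B $ leaves both contact points on the same side (so $ v,w\in Q^{\nu}_p $ for a single $ \nu $); this is exactly the three‑way subdivision. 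For cases (ii) and (iii) the remaining assertions are immediate, since $ \Pi $ already contains a lightlike (resp.\ timelike) vector, so $ T_p\eta\supset\Pi $ cannot be spacelike (resp.\ must be timelike). The delicate part is the attainment/bijection of the second paragraph, where compactness of $ B $ together with the smoothness and strong convexity of $ \C^{\mu}_p $ are essential to guarantee that each supporting line lifts to a genuine lightlike direction and that none contributes a spurious extra solution at the origin; I expect this bookkeeping to be the main obstacle, the planar case analysis itself being routine.
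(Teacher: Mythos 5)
Your proof is correct and reaches the statement by a genuinely different route from the paper's. The paper treats case (i) by invoking \cite[Proposition~5.2]{AJ16} and by slicing the cone with a $\C^{\mu}$-spacelike hyperplane $H\supset\Pi$: the two orthogonal directions are the tangency points of the two hyperplanes of $H$ parallel to $\Pi$ and tangent to the compact strongly convex cross-section $\Sigma_p$, while cases (ii) and (iii) are dispatched by separate observations (a lightlike $\Pi$ is tangent to $\C^{\mu}_p$ along a single radial direction; $v^{\perp_{\C^{\mu}}}$ contains no timelike vectors). You instead quotient by $\Pi$ and recast the whole lemma as counting supporting lines through the origin of the planar convex body $B=\pi(D)$. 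This buys a uniform treatment: the trichotomy (i)--(iii) is exactly the position of $0$ relative to $B$ (exterior, boundary, interior), and the sub-cases on $T_p\eta$ reduce to the position of the line $\ell_\eta$ relative to $B$ (secant, tangent, disjoint); the price is the bijection between orthogonal lightlike directions and supporting lines, which you establish correctly from smoothness and non-radial strong convexity of $\C^{\mu}_p$. One point does need a sentence: your assertion that $B$ has nonempty interior ``as $\pi$ is open'' is only valid if $\pi$ restricted to the affine hyperplane $\{\omega=1\}$ is onto $W$, i.e.\ if $\omega|_\Pi\neq 0$; otherwise $B$ degenerates into a segment of the line $\{\bar{\omega}=1\}$ and the interior/boundary bookkeeping (in particular ``a point of $\partial B$ is never the image of an interior point of $D$'') breaks down. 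This is automatic when $\Pi$ is causal, since $\Pi$ then contains a vector on which $\omega$ is positive, and when $\Pi$ is spacelike it can be arranged by picking $\omega$ in the nonempty open cone of timelike one-forms outside the $2$-dimensional annihilator of $\Pi$, which is possible because $\dim T_pQ=n+1\geq 3$. With that choice made explicit, the argument is complete and of comparable rigor to the paper's own proof.
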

\begin{proof}
If $ \Pi $ is $ \C^{\mu} $-spacelike, there are exactly two directions $ v, w \in \C^{\mu}_p $ $ \C^{\mu} $-orthogonal to $ \Pi $ (see \cite[Proposition~5.2]{AJ16}). Indeed, choosing any $ \C^{\mu} $-spacelike hyperplane $ H \subset T_pQ $ containing $ \Pi $ and a $ \C^{\mu} $-timelike vector $ u_t $, all the directions in $ \C_p $ are given by the vectors $ u_t+\Sigma_p $, where $ \Sigma_p $ is a compact strongly convex hypersurface of $ H $ (as in the comment following Definition~\ref{def:cone_structure}). Then, there are exactly two hyperplanes of $ H $ parallel to $ \Pi $ and tangent to $ \Sigma_p $. If $ v_H, w_H $ are the tangency points, the required directions are given by $ v=u_t+v_H, w=u_t+w_H $. Notice that only $ \Pi $ and $ \C^{\mu}_p $ are relevant for this construction, so $ T_p\eta \subset T_pQ $ may be any hyperplane (of any causal character) containing $ \Pi $. Furthermore:
\begin{itemize}
\item If $ T_p\eta $ is $ \C^{\mu} $-timelike, we can choose $ u_t \in T_p\eta $, in which case $ T_p\eta = \textup{Span}\{ u_t,\Pi \} $. As $ T_p\eta $ divides $ \C^{\mu}_p $ into two pieces, then $ \Pi $ also divides $ \Sigma_p $ into two pieces, with $ v_H $ and $ w_H $ (and thus, $ v $ and $ w $) lying on opposite sides.
\item If $ T_p\eta $ is $ \C^{\mu} $-lightlike, then it is tangent to $ \C_p^{\mu} $ along a unique $ \C^{\mu} $-lightlike direction (recall Definition~\ref{def:causality}(iv)), which is $ \C^{\mu} $-orthogonal to $ T_p\eta $ and, thus, to $ \Pi $.
\item If $ T_p\eta $ is $ \C^{\mu} $-spacelike, the assertion inmediately follows from the fact that either $ \C_p^{\mu} \subset Q^1_p $ or $ \C_p^{\mu} \subset Q^2_p $.
\end{itemize}

If $ \Pi $ is $ \C^{\mu} $-lightlike, then $ \Pi $ is tangent to $ \C^{\mu}_p $ along a unique radial direction $ v = \C^{\mu}_p \cap \Pi $ (recall Definition~\ref{def:causality}(iv)), i.e. $ \Pi \subset T_v\C^{\mu}_p = v^{\perp_{\C^{\mu}}} $. Therefore, $ v $ is $ \C^{\mu} $-orthogonal to $ \Pi $ and note that $ v \in \Pi \subset T_p\eta $ (so $ T_p\eta $ cannot be $ \C^{\mu} $-spacelike). Moreover, there is no other direction in $ \C^{\mu}_p $ $ \C^{\mu} $-orthogonal to $ v $ (recall Remark~\ref{rem:orth}(3)) and thus, $ \C^{\mu} $-orthogonal to $ \Pi $.

If $ \Pi $ is $ \C^{\mu} $-timelike, obviously so is $ T_p\eta $, and then $ \Pi \not\subset v^{\perp_{\C^{\mu}}} $ for all $ v \in \C^{\mu}_p $, since $ v^{\perp_{\C^{\mu}}} $ does not contain any $ \C^{\mu} $-timelike directions (again by Remark~\ref{rem:orth}(3)).
\end{proof}

\begin{lemma}
\label{lem:ort_equiv}
Let $ \Pi \subset T_p\eta $, with $ p \in \eta $, be a linear subspace of $ \dim(\Pi) = n-1 $. Then, for any $ v \in \C^{\mu}_p $ such that $ v $ is $ \C^{\mu} $-transverse to $ \eta $:\footnote{Recall Definition~\ref{def:c_transverse} and Remark~\ref{rem:c_transverse} for more insight regarding this transversality condition.}
\begin{equation*}
v \perp_{\C^{\mu}} \Pi \Leftrightarrow \Pi = v^{\perp_{\C^{\mu}}} \cap T_p\eta.
\end{equation*}
\end{lemma}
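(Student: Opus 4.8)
The plan is to reduce everything to a dimension count, since all the geometric content is already packaged in the transversality hypothesis together with the identification $v^{\perp_{\C^{\mu}}} = T_v\C^{\mu}_p$. First I would recall from Definition~\ref{def:orth} that $v \perp_{\C^{\mu}} \Pi$ means precisely $\Pi \subset v^{\perp_{\C^{\mu}}}$, so the whole equivalence becomes a statement comparing the $(n-1)$-dimensional subspace $\Pi$ with the hyperplane $v^{\perp_{\C^{\mu}}}$ and with $T_p\eta$.

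Next I would fix the relevant dimensions. Since $\eta$ is a hypersurface, $\dim T_p\eta = n$; since $v^{\perp_{\C^{\mu}}} = T_v\C^{\mu}_p$ is the tangent hyperplane to the cone (recall Remark~\ref{rem:orth}(3)), it is genuinely a hyperplane, so $\dim v^{\perp_{\C^{\mu}}} = n$ as well; and $\dim \Pi = n-1$ by hypothesis, all living inside $\dim T_pQ = n+1$. The transversality hypothesis $v^{\perp_{\C^{\mu}}} + T_p\eta = T_pQ$ then yields, by the Grassmann dimension formula,
\begin{equation*}
\dim\left(v^{\perp_{\C^{\mu}}} \cap T_p\eta\right) = n + n - (n+1) = n-1.
\end{equation*}
Equivalently, since both are hyperplanes in $T_pQ$, transversality just says they are distinct, and two distinct hyperplanes always meet in a subspace of codimension two.

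With this the two implications become immediate. For the forward direction, assuming $\Pi \subset v^{\perp_{\C^{\mu}}}$ and using $\Pi \subset T_p\eta$ (given), I obtain $\Pi \subset v^{\perp_{\C^{\mu}}} \cap T_p\eta$; since both sides have dimension $n-1$, they must coincide. For the converse, the identity $\Pi = v^{\perp_{\C^{\mu}}} \cap T_p\eta$ forces $\Pi \subset v^{\perp_{\C^{\mu}}}$, which is exactly the assertion $v \perp_{\C^{\mu}} \Pi$.

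There is essentially no hard step here. The only points requiring care are that $v^{\perp_{\C^{\mu}}}$ really is a hyperplane of $T_pQ$ (so that the dimension bookkeeping is correct) and that the transversality hypothesis is what pins the intersection dimension down to $n-1$ rather than leaving it at $n$. Indeed, without transversality one would have $v^{\perp_{\C^{\mu}}} = T_p\eta$, the intersection would be all of $T_p\eta$, and the clean equivalence would fail; this is precisely why the transversality assumption is imposed in the statement.
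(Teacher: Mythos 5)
Your proof is correct and follows essentially the same route as the paper: unpack $v \perp_{\C^{\mu}} \Pi$ as $\Pi \subset v^{\perp_{\C^{\mu}}}$, combine with $\Pi \subset T_p\eta$, and use the transversality hypothesis to pin $\dim\bigl(v^{\perp_{\C^{\mu}}} \cap T_p\eta\bigr)$ down to $n-1$ so that the inclusion forces equality. The extra detail you supply (the Grassmann formula and the remark that transversality of two hyperplanes just means they are distinct) is a harmless elaboration of the same dimension count.
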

\begin{proof}
Observe that
\begin{equation*}
v \perp_{\C^{\mu}} \Pi \Leftrightarrow \Pi \subset v^{\perp_{\C^{\mu}}} \cap T_p\eta \Leftrightarrow \Pi = v^{\perp_{\C^{\mu}}} \cap T_p\eta,
\end{equation*}
where we have used, in the first equivalence, that $ \Pi \subset T_p\eta $, and in the second one, that $ \dim(\Pi) = \dim(v^{\perp_{\C^{\mu}}} \cap T_p\eta) = n-1 $ because $ v^{\perp_{\C^{\mu}}} \not= T_p\eta $.
\end{proof}

Next, we define what we understand as incident, refracted and reflected directions and trajectories, in agreement with Fermat's principle (Theorem~\ref{thm:snell}(I) and Corollary~\ref{thm:reflection}(I)).

\begin{defi}
\label{def:directions}
For any $ r \in \C_p^{\mu} $, we define (see Figure~\ref{fig:pi_r})
\begin{equation*}
\Pi^{\mu}_r \coloneqq r^{\perp_{\C^{\mu}}} \cap T_p\eta.
\end{equation*}
Then, we say that $ u \in \C_p^1 $ is an {\em incident direction} if $ u \in Q^2_p \cup T_p\eta $, and given such a direction:
\begin{itemize}
\item $ v \in \C_p^2 $ is a {\em refracted direction} (associated with $ u $) if $ v \in Q^2_p \cup T_p\eta $ and Snell's law $ \Pi^1_u \subset \Pi^2_v $ (i.e. $ v \bot_{\C^2} \Pi^1_u $) holds.
\item $ w \in \C_p^1 $ is a {\em reflected direction} (associated with $ u $) if $ w \in Q^1_p \cup T_p\eta $ and the law of reflection $ \Pi^1_u \subset \Pi^1_w $ (i.e. $ w \bot_{\C^1} \Pi^1_u $) holds.
\end{itemize}
Consistently, $ \gamma_u: [a,\tau] \rightarrow \overline{Q}_1 $ is an {\em incident trajectory} if it is a cone geodesic of $ \C^1 $ topologically transverse to $ \eta $, with $ \gamma_u(\tau) \in \eta $, such that $ u = \dot{\gamma}_u(\tau^-) $ is an incident direction, and given such a curve:
\begin{itemize}
\item $ \gamma_v: [\tau,b] \rightarrow \overline{Q}_2 $ is a {\em refracted trajectory} (associated with $ \gamma_u $) if it is a cone geodesic of $ \C^2 $ topologically transverse to $ \eta $, with $ \gamma_v(\tau) \in \eta $, such that $ v = \dot{\gamma}_v(\tau^+) $ is a refracted direction (associated with $ u $).
\item $ \gamma_w: [\tau,b] \rightarrow \overline{Q}_1 $ is a {\em reflected trajectory} (associated with $ \gamma_u $) if it is a cone geodesic of $ \C^1 $ topologically transverse to $ \eta $, with $ \gamma_w(\tau) \in \eta $, such that $ w = \dot{\gamma}_w(\tau^+) $ is a reflected direction (associated with $ u $).
\end{itemize}
In addition, we say that $ \gamma_r $ is $ \C^{\mu} $-{\em transverse} to $ \eta $ if $ r \in \C^{\mu}_p $ is so, with $ p = \gamma_r(\tau) $, i.e. if $ r^{\perp_{\C^{\mu}}} \not= T_r\C_p^{\mu} $ ($ \mu = 1 $ when $ r = u,w $; $ \mu = 2 $ when $ r = v $).
\end{defi}

\begin{figure}
\centering
\begin{subfigure}{0.4\textwidth}
\includegraphics[width=\textwidth]{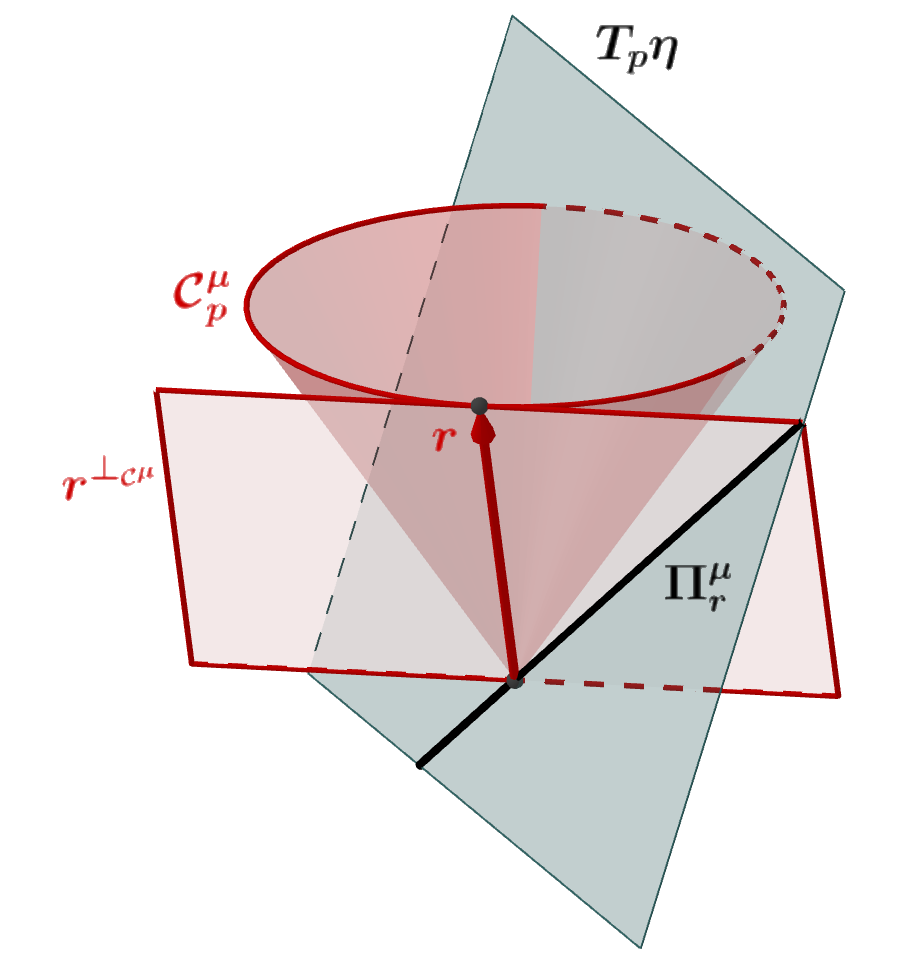}
\caption{$ r^{\perp_{\C^{\mu}}} \not= T_p\eta $.}
\label{fig:pi_r_a}
\end{subfigure}
\begin{subfigure}{0.4\textwidth}
\includegraphics[width=\textwidth]{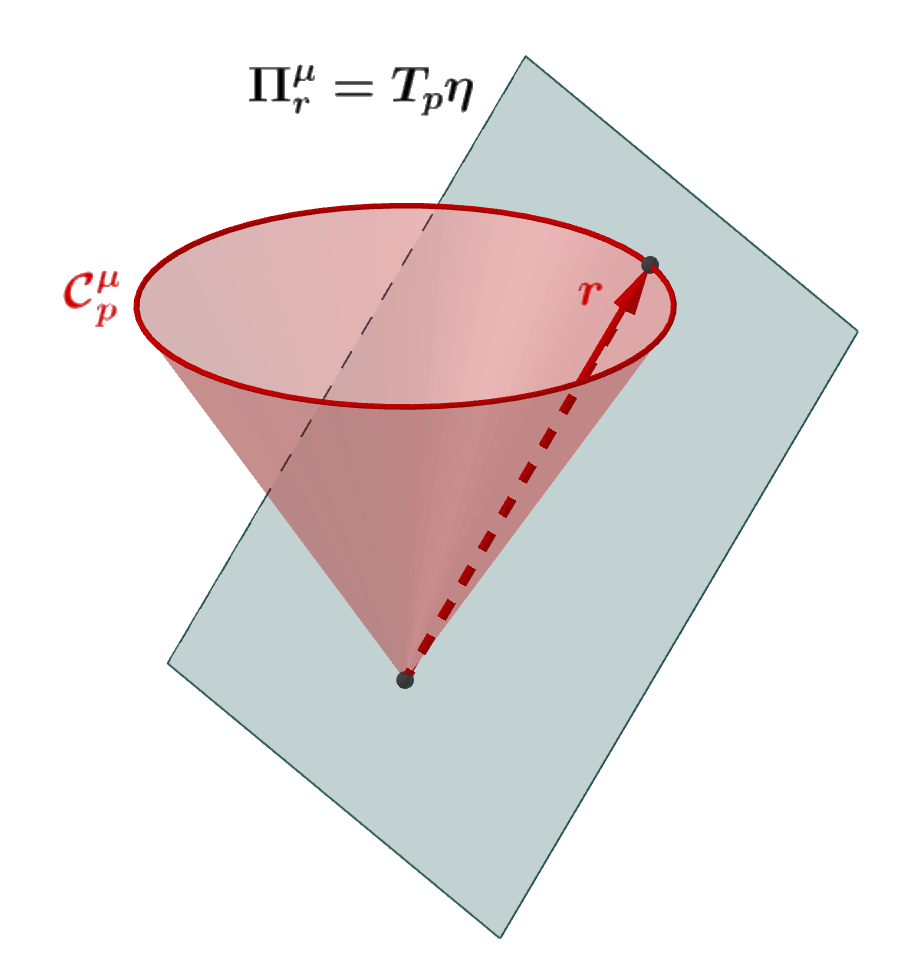}
\caption{$ r^{\perp_{\C^{\mu}}} = T_p\eta $.}
\label{fig:pi_r_b}
\end{subfigure}
\caption{On the left, $ r $ is transverse (and thus, $\C^{\mu} $-transverse) to $ \eta $ and $ \Pi^{\mu}_r \coloneqq r^{\perp_{\C^{\mu}}} \cap T_p\eta $ is a codimension 1 subspace of $ T_p\eta $. On the right, $ \C_p^{\mu} $ is tangent to $ T_p\eta $ along the direction $ r $ and $ \Pi^{\mu}_r = T_p\eta $.}
\label{fig:pi_r}
\end{figure}

\begin{rem}
\label{rem:directions}
Observe that:
\begin{enumerate}
\item The definition of a refracted (resp. reflected) direction ensures that if the corresponding refracted (resp. reflected) trajectory exists, then it is a critical point of $ \T $ (resp. $ \T^* $), by Theorem~\ref{thm:snell}(I) (resp. Corollary~\ref{thm:reflection}(I)).
\item A refracted or reflected trajectory may not exist, even if its corresponding refracted or reflected direction does. Specifically, when this direction is tangent to $ \eta $, the bending of $ \eta $ may cause the corresponding cone geodesic to lie in the ``wrong'' side of the interface.
\item Given any $ r \in \C_p^{\mu} $, the following are equivalent characterizations of the $ \C^{\mu} $-transversality, due to Remark~\ref{rem:c_transverse} (see Figure~\ref{fig:pi_r_a}):
\begin{itemize}
\item[$ \circ $] $ r $ is $ \C^{\mu} $-transverse to $ T_p\eta $, i.e. $ r^{\perp_{\C^{\mu}}} \not= T_p\eta $.
\item[$ \circ $] $ \Pi^{\mu}_r $ is a codimension 1 subspace of $ T_p\eta $.
\item[$ \circ $] Either $ T_p\eta $ is not $ \C^{\mu} $-lightlike or $ r \notin T_p\eta $ (or both).
\end{itemize}
\item Conversely, $ r \in \C_p^{\mu} $ is not $ \C^{\mu} $-transverse to $ \eta $ when one of the following equivalent conditions holds (see Figure~\ref{fig:pi_r_b}):
\begin{itemize}
\item[$ \circ $] $ \C_p^{\mu} $ is tangent to $ T_p\eta $ along the direction $ r $, i.e. $ r^{\perp_{\C^{\mu}}} = T_p\eta $.
\item[$ \circ $] $ \Pi^{\mu}_r = T_p\eta $.
\item[$ \circ $] $ T_p\eta $ is $ \C^{\mu} $-lightlike and $ r \in T_p\eta $.
\end{itemize}
\item Due to Lemma~\ref{lem:ort_equiv}, if $ r \in \C^\mu_p $ is $ \C^{\mu} $-transverse to $ \eta $, then $ \Pi_u^1 \subset \Pi_r^\mu $ implies $ \Pi_u^1 = \Pi_r^\mu $ (which can only hold if $ u $ is also $ \C^1 $-transverse to $ \eta $). In particular, when $ T_p\eta $ is not $ \C^\mu $-lightlike, then every $ r \in \C^\mu_p $ is $ \C^{\mu} $-transverse to $ \eta $, so Snell's law (when $ \mu=2 $) or the law of reflection (when $ \mu=1 $) can only be satisfied with equality: $ \Pi_u^1 = \Pi_r^\mu $. Therefore, the cases in which these laws hold as a strict inclusion rather than an equality correspond to non-generic configurations with less physical relevance, since an arbitrarily small perturbation of $ T_p\eta $ restores the equality.
\end{enumerate}
\end{rem}

\subsection{Refracted directions}
\label{sec:refracted_dir}
Now we are ready to study the existence and uniqueness of refracted directions, depending on the different possibilities for the incident direction $ u $ and for the causal characters of $ T_p\eta $ and $ \Pi^1_u $.

\begin{thm}[Existence and uniqueness of refracted directions]
\label{thm:existence_refraction}
Let $ u \in \C^1_p $ be an incident direction at $ p \in \eta $. When $ u $ is $ \C^1 $-transverse to $ \eta $ and $ \C^2_p \cap Q^2_p \not= \emptyset $:
\begin{itemize}
\item[(A)] If $ T_p\eta $ is $ \C^2 $-timelike, then:
\begin{itemize}
\item[(i)] If $ \Pi^1_u $ is $ \C^2 $-spacelike, there exists a unique refracted direction, necessarily transverse to $ \eta $ (see Figure~\ref{fig:case_ai}).
\item[(ii)] If $ \Pi^1_u $ is $ \C^2 $-lightlike, there exists a unique refracted direction, necessarily tangent to $ \eta $ (see Figure~\ref{fig:case_aii}).
\item[(iii)] If $ \Pi^1_u $ is $ \C^2 $-timelike, there are no refracted directions.
\end{itemize}
\item[(B)] If $ T_p\eta $ is $ \C^2 $-lightlike, then $ \Pi^1_u $ cannot be $ \C^2 $-timelike and:
\begin{itemize}
\item[(i)] If $ \Pi^1_u $ is $ \C^2 $-spacelike, there are exactly two refracted directions, necessarily one tangent to $ \eta $ and the other transverse (see Figure~\ref{fig:case_bi}).
\item[(ii)] If $ \Pi^1_u $ is $ \C^2 $-lightlike, there exists a unique refracted direction, necessarily tangent to $ \eta $ (see Figure~\ref{fig:case_bii}).
\end{itemize}
\item[(C)] If $ T_p\eta $ is $ \C^2 $-spacelike, there are exactly two refracted directions, necessarily transverse to $ \eta $ (see Figure~\ref{fig:case_c}).
\end{itemize}
Otherwise, when $ u^{\perp_{\C^1}} = T_p\eta $ (see Figure~\ref{fig:ex_refraction_a}) or $ \C^2_p \cap Q^2_p = \emptyset $ (see Figure~\ref{fig:ex_refraction_b}), there exists a refracted direction (necessarily unique and tangent to $ \eta $) if and only if $ T_p\eta $ is $ \C^2 $-lightlike.
\end{thm}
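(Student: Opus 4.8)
The whole statement reduces, case by case, to the linear count of Lemma~\ref{lem:Pi} applied to the fixed hyperplane $\Pi \coloneqq \Pi^1_u = u^{\perp_{\C^1}} \cap T_p\eta \subset T_p\eta$, which has dimension $n-1$ because $u^{\perp_{\C^1}}$ is transverse to $T_p\eta$ (as $u$ is an incident direction). First I would translate Snell's law into an orthogonality condition: by Lemma~\ref{lem:ort_equiv}, a direction $v \in \C^2_p$ with $v^{\perp_{\C^2}}$ transverse to $T_p\eta$ satisfies $\Pi^1_u = \Pi^2_v$ if and only if $v \perp_{\C^2} \Pi$. Hence, by Definition~\ref{def:directions}, the refracted directions are exactly the $v \in \C^2_p$ with $v \perp_{\C^2} \Pi$ that additionally lie in $Q^2_p \cup T_p\eta$ and whose $v^{\perp_{\C^2}}$ is transverse to $T_p\eta$. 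The candidates $v \perp_{\C^2} \Pi$ are produced, counted and located by Lemma~\ref{lem:Pi} with $\mu=2$, according to the $\C^2$-causal character of $\Pi$; what remains is to impose the side filter $v \in Q^2_p \cup T_p\eta$ and the transversality filter, for which the single extra ingredient is the standing hypothesis $\C^2_p \cap Q^2_p \neq \emptyset$.

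I would dispatch cases (A) and (C) first, since there $T_p\eta$ is not $\C^2$-lightlike and the transversality of $v^{\perp_{\C^2}}$ is automatic by Remark~\ref{rem:directions}(1). In (A), $T_p\eta$ is $\C^2$-timelike and I split on the character of $\Pi$: if $\Pi$ is $\C^2$-spacelike, the first bullet of Lemma~\ref{lem:Pi}(i) places the two orthogonal directions on opposite sides of $T_p\eta$, so exactly one lies in $Q^2_p$, yielding a unique refracted direction, non-tangent; if $\Pi$ is $\C^2$-lightlike, Lemma~\ref{lem:Pi}(ii) gives a unique orthogonal direction lying inside $\Pi \subset T_p\eta$, hence tangent and (transversality being automatic) a valid refracted direction; if $\Pi$ is $\C^2$-timelike, Lemma~\ref{lem:Pi}(iii) gives none. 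In (C), $T_p\eta$ is $\C^2$-spacelike, so $\Pi \subset T_p\eta$ is $\C^2$-spacelike and the third bullet of Lemma~\ref{lem:Pi}(i) places both orthogonal directions on the same side of $T_p\eta$; since a $\C^2$-spacelike $T_p\eta$ forces $\C^2_p$ to lie entirely on one side, the hypothesis $\C^2_p \cap Q^2_p \neq \emptyset$ pins that side down to $Q^2_p$, so both directions are refracted and non-tangent.

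The delicate case is (B), where $T_p\eta$ is $\C^2$-lightlike. I would first rule out $\Pi$ being $\C^2$-timelike: otherwise Lemma~\ref{lem:Pi}(iii) would force $T_p\eta$ to be $\C^2$-timelike, a contradiction. The key geometric remark is that a $\C^2$-lightlike $T_p\eta$ is tangent to $\C^2_p$ along a unique lightlike direction $\ell$, with $\ell^{\perp_{\C^2}} = T_\ell \C^2_p = T_p\eta$; thus $\ell$ violates transversality and is never a refracted direction, while $\C^2_p \cap T_p\eta = \{\ell\}$ and $\C^2_p \setminus \{\ell\}$ lies on one side, which by $\C^2_p \cap Q^2_p \neq \emptyset$ is $Q^2_p$. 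Now, if $\Pi$ is $\C^2$-spacelike, the second bullet of Lemma~\ref{lem:Pi}(i) gives two orthogonal directions, one in $T_p\eta$ (necessarily $\ell$, discarded) and one off $T_p\eta$, which by one-sidedness lies in $Q^2_p$ and, not being the tangency direction $\ell$, has $v^{\perp_{\C^2}} \neq T_p\eta$ by the uniqueness of tangency (Remark~\ref{rem:orth}(3)), hence is transverse: this is the unique refracted direction, non-tangent. If $\Pi$ is $\C^2$-lightlike, Lemma~\ref{lem:Pi}(ii) gives a unique orthogonal direction inside $\Pi \subset T_p\eta$, which must be $\ell$ and fails transversality, so no refracted directions exist.

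The main obstacle, and the only genuinely subtle point, is case (B): one must carefully separate the candidate tangent direction---always the single tangency direction $\ell$, which is automatically excluded for failing the transversality requirement built into the definition of a refracted direction---from an honest off-interface solution, and then use $\C^2_p \cap Q^2_p \neq \emptyset$ together with the one-sidedness of a lightlike-tangent cone to certify that this solution lands on the $Q_2$-side. Once Snell's law is rephrased as $v \perp_{\C^2} \Pi$ via Lemma~\ref{lem:ort_equiv}, all other cases follow mechanically from Lemma~\ref{lem:Pi}.
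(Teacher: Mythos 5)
Your proposal is correct and follows essentially the same route as the paper's proof: it rephrases Snell's law as $v\perp_{\C^2}\Pi^1_u$ via Lemma~\ref{lem:ort_equiv}, counts and locates the candidates with Lemma~\ref{lem:Pi} according to the $\C^2$-causal character of $T_p\eta$ and $\Pi^1_u$, and then filters by the side condition and transversality, using $\C^2_p\cap Q^2_p\neq\emptyset$ exactly where the paper does (to pin down the side in case (C) and to exclude $Q^1_p$ in case (B)). The treatment of the lightlike interface---identifying the tangency direction $\hat v$ as the discarded orthogonal candidate---also matches the paper's argument.
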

\begin{proof}
When $ u $ is $ \C^1 $-transverse to $ \eta $ and $ \C^2_p \cap Q^2_p \not= \emptyset $, we have the following cases, depending on the causal character of $ T_p\eta $ with respect to $ \C^2 $:
\begin{itemize}
\item If $ T_p\eta $ is $ \C^2 $-timelike, this guarantees that $ v $ is $ \C^2 $-transverse to $ \eta $, for any $ v \in \C_p^2 $. Then:
\begin{itemize}
\item[$ \circ $] If $ \Pi^1_u $ is $ \C^2 $-spacelike, Lemmas~\ref{lem:Pi}(i) and \ref{lem:ort_equiv} tell us that there are exactly two directions $ v, \hat{v} \in \C_p^2 $ satisfying Snell's law $ \Pi^1_u = \Pi^2_v = \Pi^2_{\hat{v}} $, but either $ v \in Q^2_p $ and $ \hat{v} \in Q^1_p $, or vice versa, i.e. there is exactly one refracted direction.
\item[$ \circ $] If $ \Pi_u^1 $ is $ \C^2 $-lightlike, then by Lemmas~\ref{lem:Pi}(ii) and \ref{lem:ort_equiv} there is a unique direction $ v \in \C_p^2 $, necessarily tangent to $ \eta $, satisfying Snell's law $ \Pi_u^1 = \Pi_v^2 $.
\item[$ \circ $] If $ \Pi_u^1 $ is $ \C^2 $-timelike, then Lemma~\ref{lem:Pi}(iii) implies that there are no refracted directions.
\end{itemize}
\item If $ T_p\eta $ is $ \C^2 $-lightlike, it means that $ T_p\eta $ is tangent to $ \C_p^2 $ along a unique radial direction $ \hat{v} \in T_p\eta $, with $ \hat{v}^{\perp_{\C^2}} = T_p\eta $, so $ \hat{v} $ satisfies Snell's law $ \Pi_u^1 \subset \Pi_{\hat{v}}^2 = T_p\eta $. Then:
\begin{itemize}
\item[$ \circ $] If $ \Pi_u^1 $ is $ \C^2 $-spacelike, by Lemma~\ref{lem:Pi}(i) there are two different $ \C^2 $-lightlike directions $ \C^2 $-orthogonal to $ \Pi_u^1 $, and clearly $ \hat{v} $ is one of them. Since $ \C^2_p \cap Q^2_p \not= \emptyset $, this implies that $ \C^2_p \cap Q^1_p = \emptyset $ and thus, the other orthogonal direction $ v $ is necessarily in $ Q^2_p $.
\item[$ \circ $] If $ \Pi_u^1 $ is $ \C^2 $-lightlike, then by Lemma~\ref{lem:Pi}(ii) there is a unique $ \C^2 $-lightlike direction $ \C^2 $-orthogonal to $ \Pi_u^1 $, which has to be $ \hat{v} $.
\item[$ \circ $] $ \Pi_u^1 $ cannot be $ \C^2 $-timelike, since $ \Pi_u^1 \subset T_p\eta $ but $ T_p\eta $ only contains $ \C^2 $-spacelike or $ \C^2 $-lightlike directions.
\end{itemize}
\item If $ T_p\eta $ is $ \C^2 $-spacelike, the hypothesis $ \C^2_p \cap Q^2_p \not= \emptyset $ guarantees that $ \C^2_p \subset Q^2_p $ and $ v $ is $ \C^2 $-transverse to $ \eta $, for any $ v \in \C_p^2 $. Since $ \Pi_u^1 \subset T_p\eta $ must also be $ \C^2 $-spacelike, by Lemmas~\ref{lem:Pi}(i) and \ref{lem:ort_equiv} there are exactly two directions $ v, \hat{v} \in \C_p^2 \subset Q^2_p $ satisfying Snell's law $ \Pi_u^1 = \Pi_v^2 = \Pi_{\hat{v}}^2 $, so both are refracted directions non-tangent to $ \eta $.
\end{itemize}
When $ u^{\perp_{\C^1}} = T_p\eta $ or $ \C^2_p \cap Q^2_p = \emptyset $, $ v \in \C_p^2 $ is a refracted direction if and only if $ v^{\perp_{\C^2}} = T_p\eta $ (and thus, $ \Pi^2_v = T_p\eta $), i.e. $ \C^2_p $ is tangent to $ T_p\eta $ along the radial direction $ v \in \C^2_p \cap T_p\eta $ (recall Remark~\ref{rem:directions}(4)). Moreover, if this direction exists, then it is necessarily unique, due to the non-radial strong convexity of $ \C^2_p $.
\end{proof}

\begin{figure}
\centering
\begin{subfigure}{0.4\textwidth}
\includegraphics[width=\textwidth]{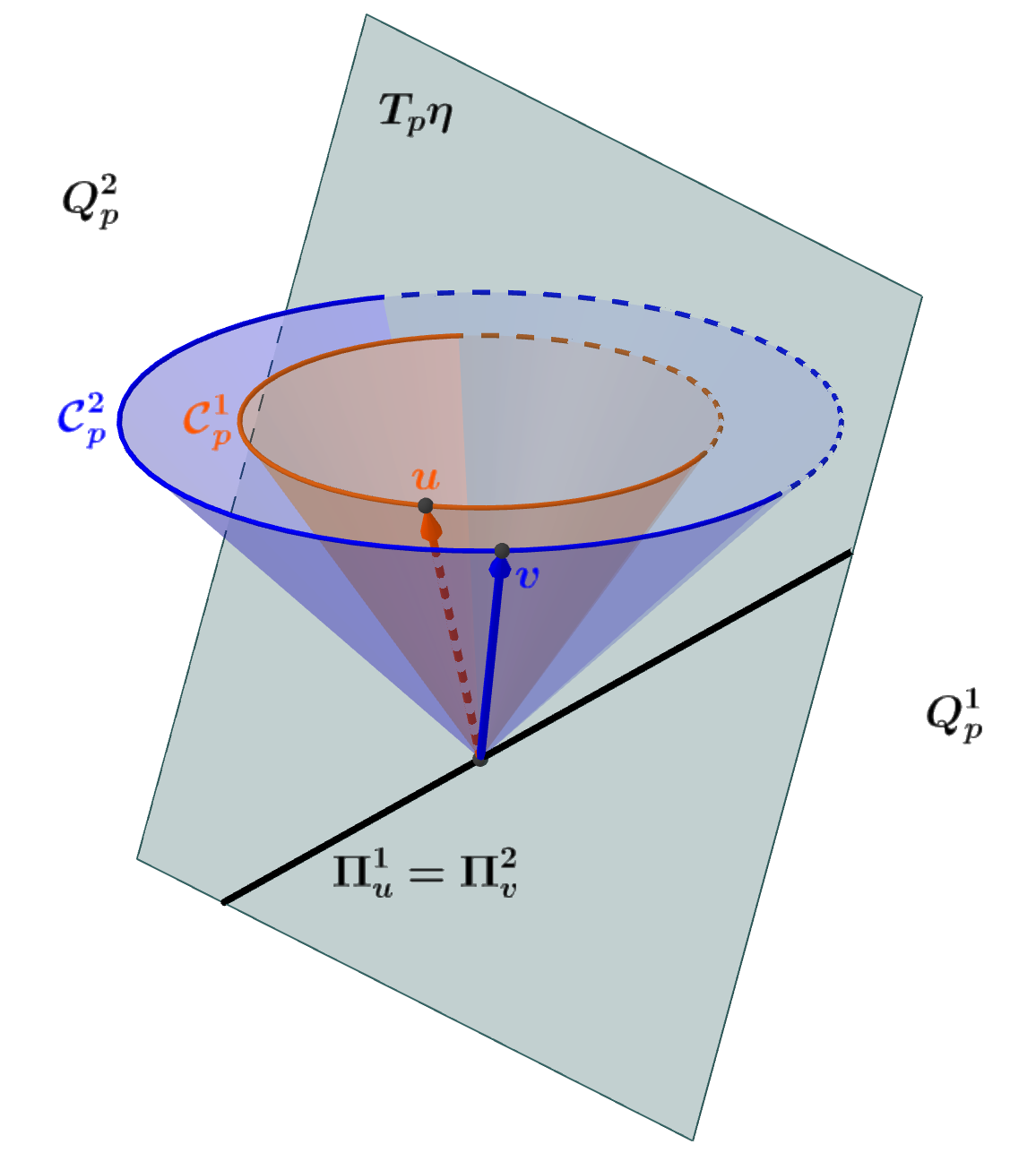}
\caption{Case (A)(i).}
\label{fig:case_ai}
\end{subfigure}
\begin{subfigure}{0.4\textwidth}
\includegraphics[width=\textwidth]{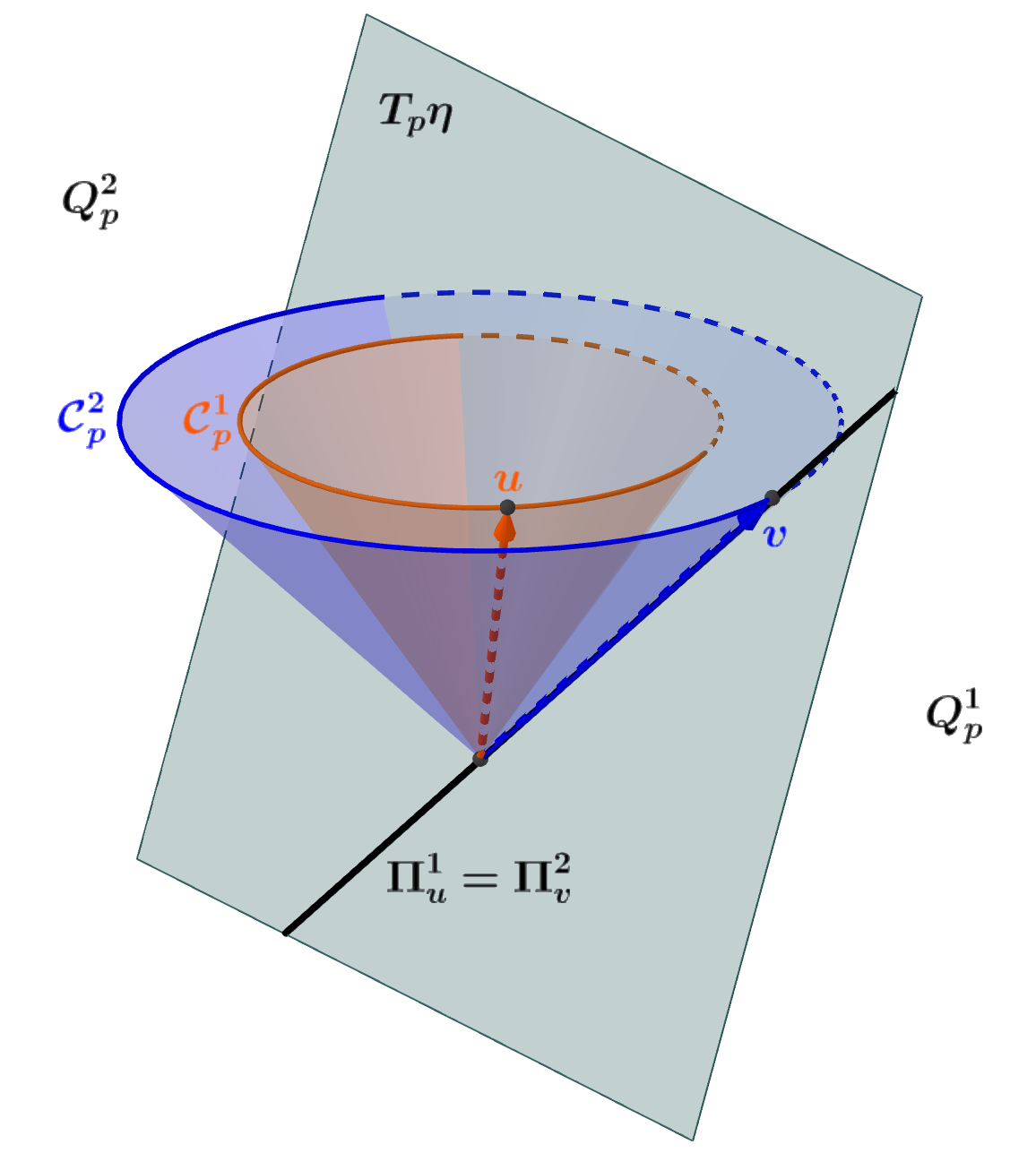}
\caption{Case (A)(ii).}
\label{fig:case_aii}
\end{subfigure}
\caption{Case (A) of Theorem~\ref{thm:existence_refraction}, when $ T_p\eta $ is $ \C^2 $-timelike (and also $ \C^1 $-timelike in this figure). On the left, $ \Pi_u^1 $ is $ \C^2 $-spacelike and $ v \in Q^2_p $ is the unique refracted direction. On the right, $ \Pi_u^1 $ is $ \C^2 $-lightlike and $ v \in T_p\eta $ is the unique refracted direction.}
\label{fig:case_a}
\end{figure}

\begin{figure}
\centering
\begin{subfigure}{0.4\textwidth}
\includegraphics[width=\textwidth]{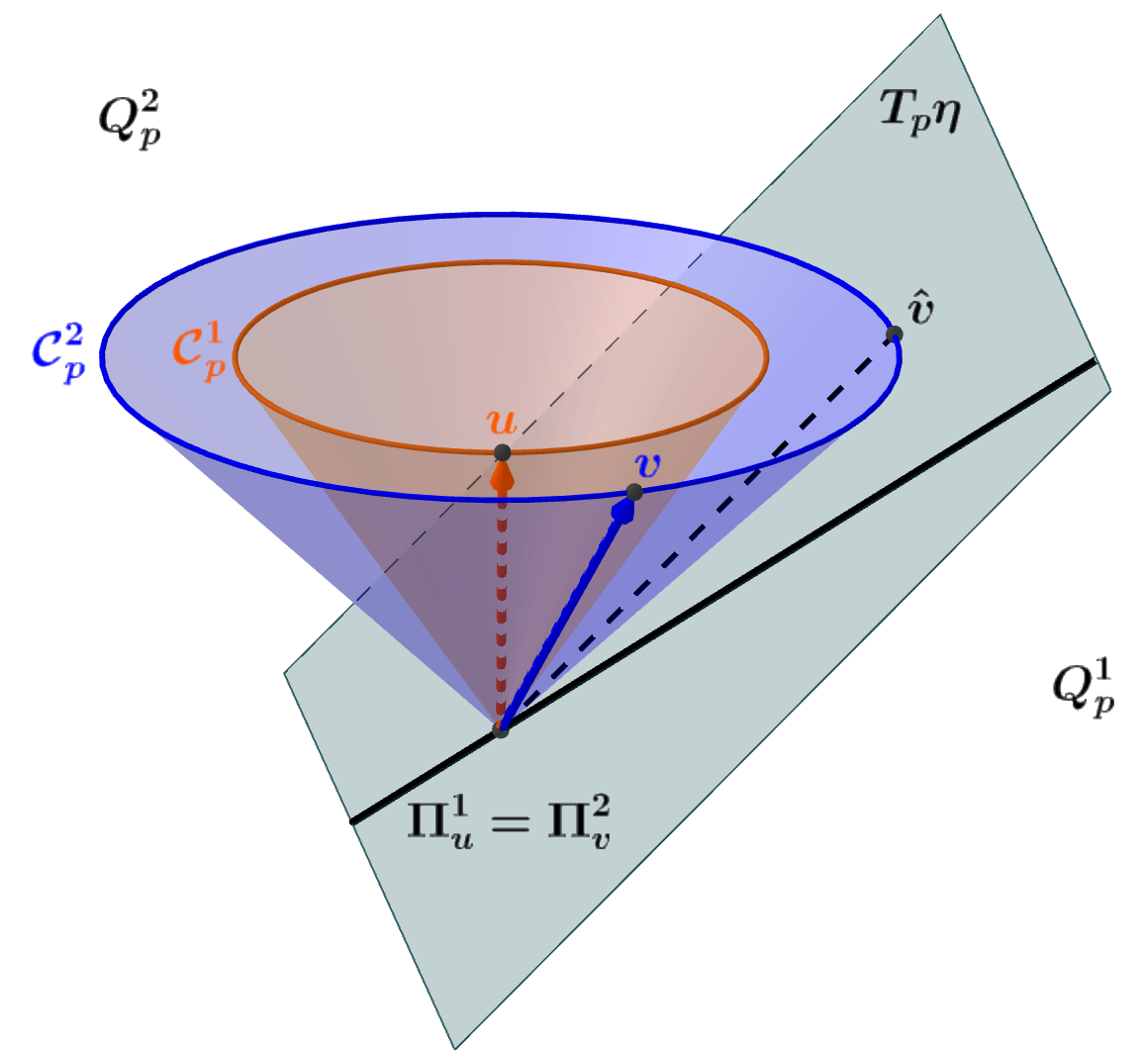}
\caption{Case (B)(i).}
\label{fig:case_bi}
\end{subfigure}
\begin{subfigure}{0.4\textwidth}
\includegraphics[width=\textwidth]{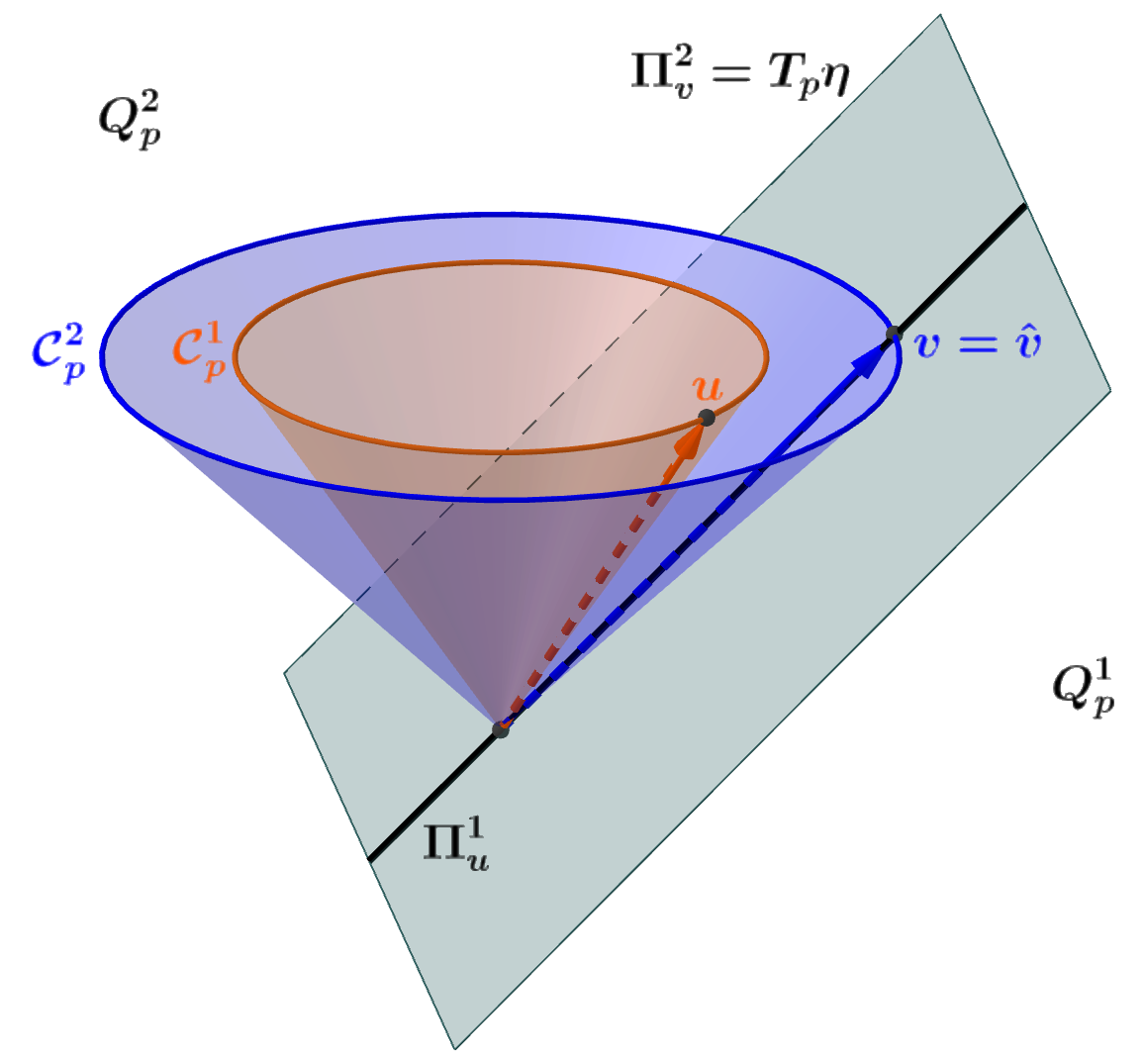}
\caption{Case (B)(ii).}
\label{fig:case_bii}
\end{subfigure}
\caption{Case (B) of Theorem~\ref{thm:existence_refraction}, when $ T_p\eta $ is $ \C^2 $-lightlike and tangent to $ \C^2_p $ along the direction $ \hat{v} $. On the left, $ \Pi_u^1 $ is $ \C^2 $-spacelike and $ v \in Q^2_p $, $ \hat{v} \in T_p\eta $ are two different refracted directions. On the right, $ \Pi_u^1 $ is $ \C^2 $-lightlike and $ v = \hat{v} \in T_p\eta $ is the unique refracted direction.}
\end{figure}

\begin{figure}
\centering
\includegraphics[width=0.6\textwidth]{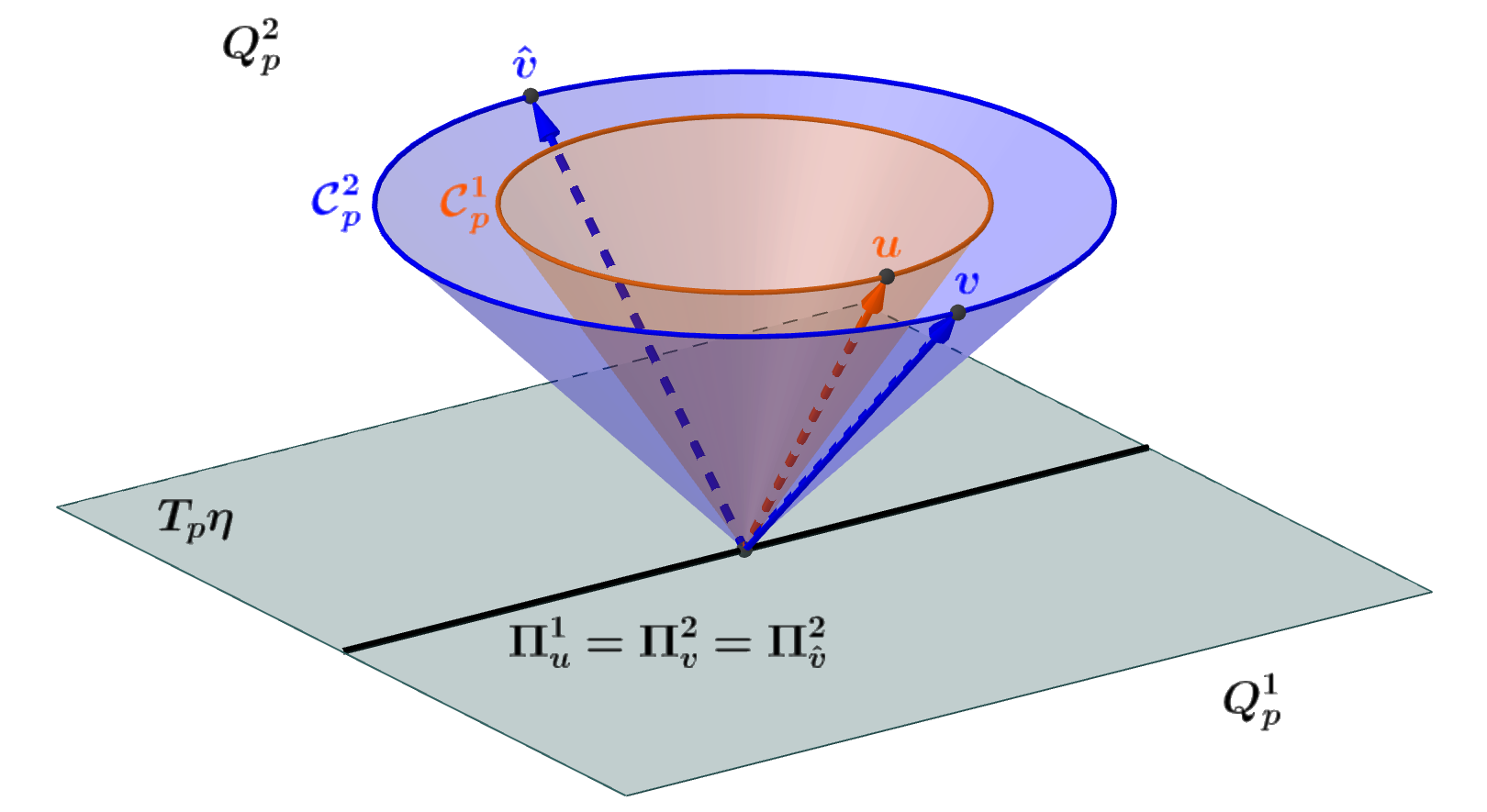}
\caption{Case (C) of Theorem~\ref{thm:existence_refraction}, when $ T_p\eta $ is $ \C^2 $-spacelike (and also $ \C^1 $-spacelike in this figure). There are always two different refracted directions $ v, \hat{v} \in Q^2_p $ in this situation.}
\label{fig:case_c}
\end{figure}

\begin{figure}
\centering
\begin{subfigure}{0.4\textwidth}
\includegraphics[width=\textwidth]{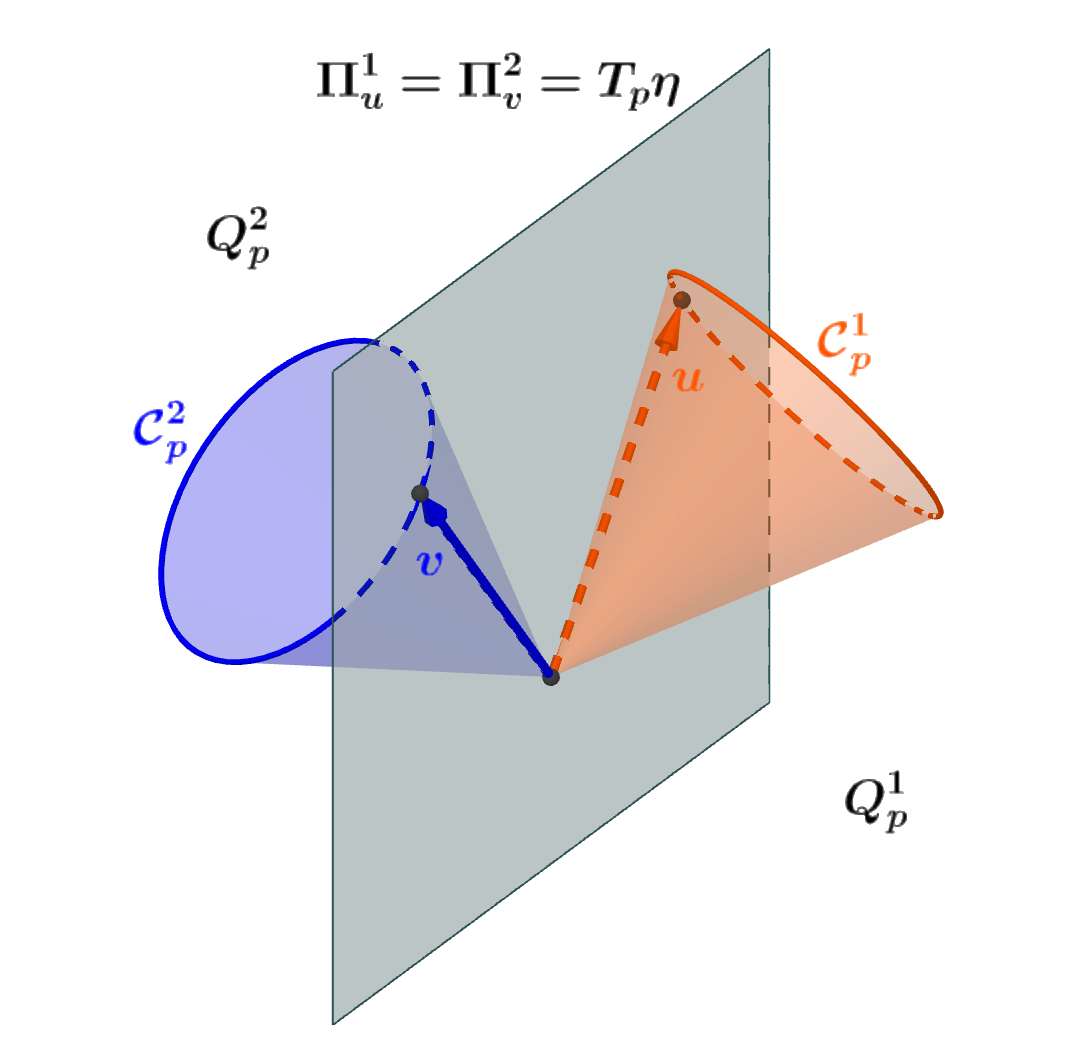}
\caption{$ u^{\perp_{\C^1}} = T_p\eta $.}
\label{fig:ex_refraction_a}
\end{subfigure}
\begin{subfigure}{0.4\textwidth}
\includegraphics[width=\textwidth]{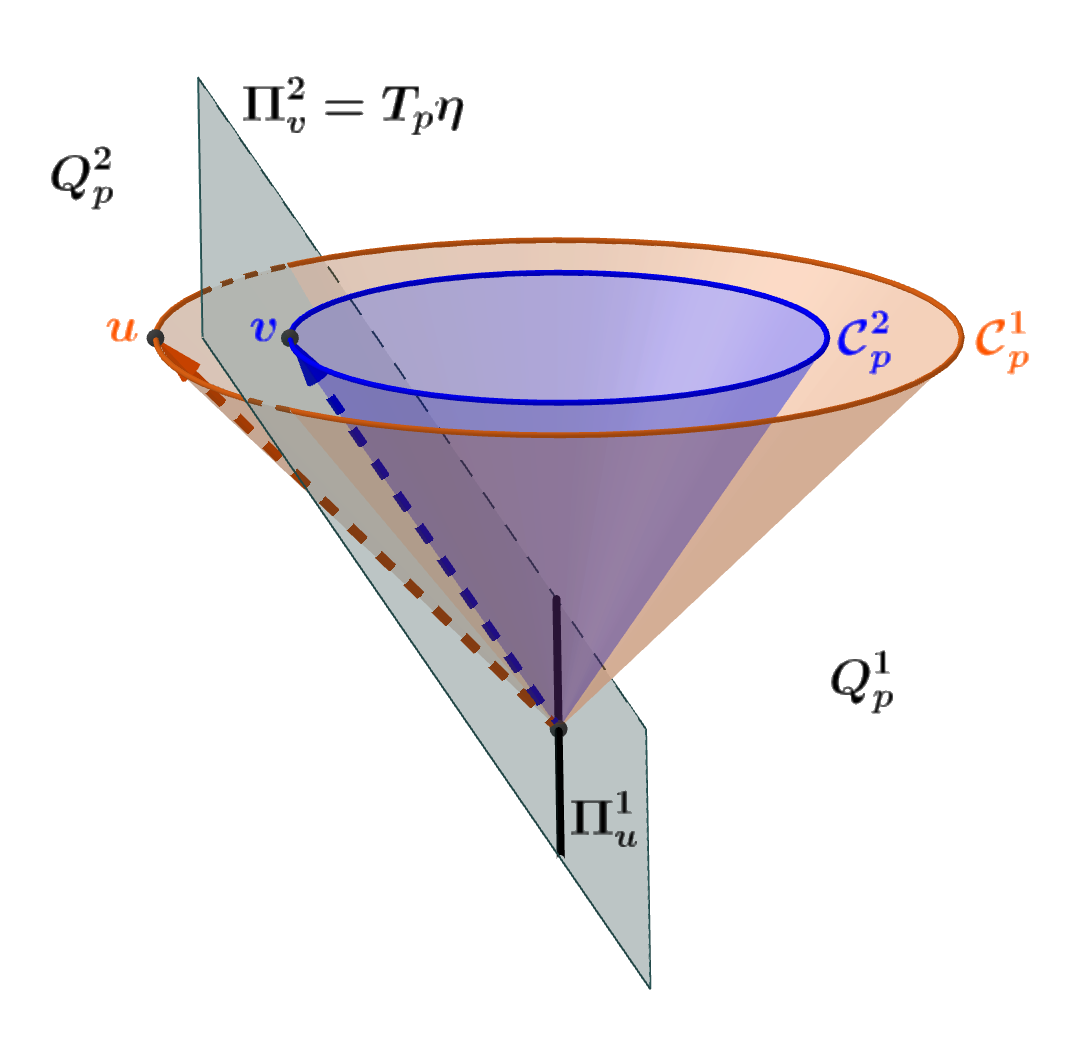}
\caption{$ \C^2_p \cap Q^2_p = \emptyset $.}
\label{fig:ex_refraction_b}
\end{subfigure}
\caption{Final cases from Theorem~\ref{thm:existence_refraction}. On the left, $ \C^2_p \cap Q^2_p \not= \emptyset $ but $ u^{\perp_{\C^1}} = T_p\eta $. On the right, $ u $ is $ \C^1 $-transverse to $ \eta $ but $ \C^2_p \cap Q^2_p = \emptyset $. In both cases, $ \C^2_p $ is tangent to $ T_p\eta $ along the unique refracted direction $ v \in T_p\eta $.}
\end{figure}

Table~\ref{tab:refraction} summarizes all the possibilities for the existence and uniqueness of refracted directions when $ u $ is $ \C^1 $-transverse to $ T_p\eta $.

\begin{table}
\begin{center}
\begin{tabular}{cccc}
\multicolumn{4}{c}{\textbf{Refracted directions}} \\
\multicolumn{4}{c}{(for $ \C^1 $-transverse incident directions)} \\ \midrule\midrule
$ \boldsymbol{T_p\eta} \ \boldsymbol{\backslash} \ \boldsymbol{\Pi_u^1} $ & $ \boldsymbol{\C^2} $\textbf{-spacelike} & $ \boldsymbol{\C^2} $\textbf{-lightlike} & $ \boldsymbol{\C^2} $\textbf{-timelike} \\ \midrule\midrule
\multirow{2}{*}{$ \boldsymbol{\C^2} $\textbf{-timelike}} & $ \exists! $ ($ \not\in T_p\eta $) & $ \exists! $ ($ \in T_p\eta $) & \multirow{2}{*}{No critical points} \\
 & Figure~\ref{fig:case_ai} & Figure~\ref{fig:case_aii} & \\ \midrule
\multirow{4}{*}{$ \boldsymbol{\C^2} $\textbf{-lightlike}} & $ \exists $ $ 2 $ ($ \not\in T_p\eta $ \& $ \in T_p\eta $) if $ (*) $ & & \multirow{4}{*}{Not possible} \\
 & Figure~\ref{fig:case_bi} & $ \exists! $ ($ \in T_p\eta $) & \\
 & $ \exists! $ ($ \in T_p\eta $) otherwise & Figure~\ref{fig:case_bii} & \\
 & Figure~\ref{fig:ex_refraction_b} & & \\ \midrule
\multirow{3}{*}{$ \boldsymbol{\C^2} $\textbf{-spacelike}} & $ \exists $ $ 2 $ ($ \not\in T_p\eta $) if $ (*) $ & \multirow{3}{*}{Not possible} & \multirow{3}{*}{Not possible} \\
 & Figure~\ref{fig:case_c} & & \\
 & No crossing otherwise & & \\ \midrule\midrule
\multicolumn{4}{c}{$ (*) $: $ \C^2_p \cap Q^2_p \not= \emptyset $}
\end{tabular}
\captionsetup{singlelinecheck=off}
\caption[Table 1]{All possible refracted directions (from Theorem~\ref{thm:existence_refraction}) associated with an incident direction $ u $ at $ p \in \eta $, assuming that $ u $ is $ \C^1 $-transverse to $ \eta $. The cases of non-existence are classified as follows: {\em no critical points} (cone geodesics can cross into $ \overline{Q}_2 $, but none is critical); {\em no crossing} ($ \C_p^2 \subset Q_p^1 $, so cone geodesics cannot cross into $ \overline{Q}_2 $); and {\em not possible} (incompatible causal characters of $ T_p\eta $ and $ \Pi_u^1 $).}
\label{tab:refraction}
\end{center}
\end{table}

\subsection{Reflected directions}
\label{sec:reflected_dir}
In the same way as we reduced the law of reflection in \S~\ref{sec:reflection} as a particular case of Snell's law of refraction (by using the construction of the double manifold in Remark~\ref{rem:double}), we can also identify the reflected directions in the original setting $ \overline{Q}_1 \cup \overline{Q}_2 $ as refracted ones in $ \overline{Q}_1 \cup_{\textup{Id}} \overline{Q}_1 $, obtaining the following analogous result to Theorem~\ref{thm:existence_reflection}.

\begin{cor}[Existence and uniqueness of reflected directions]
\label{thm:existence_reflection}
Let $ u \in \C^1_p $ be an incident direction at $ p \in \eta $. When $ u $ is $ \C^1 $-transverse to $ \eta $:
\begin{itemize}
\item[(A$^*$)] If $ T_p\eta $ is $ \C^1 $-timelike, then $ \Pi^1_u $ cannot be $ \C^1 $-timelike and:
\begin{itemize}
\item[(i)] If $ \Pi_u^1 $ is $ \C^1 $-spacelike, there exists a unique reflected direction, necessarily transverse to $ \eta $ (see Figure~\ref{fig:case_asi}).
\item[(ii)] If $ \Pi_u^1 $ is $ \C^1 $-lightlike, there exists a unique reflected direction, necessarily tangent to $ \eta $, which coincides with $ u $ (see Figure~\ref{fig:case_asii}).
\end{itemize}
\item[(B$^*$)] If $ T_p\eta $ is $ \C^1 $-lightlike, there exists a unique reflected direction, necessarily tangent to $ \eta $ (see Figure~\ref{fig:ex_reflection}).
\item[(C$^*$)] If $ T_p\eta $ is $ \C^1 $-spacelike, there are no reflected directions.
\end{itemize}
Otherwise, when $ u^{\perp_{\C^1}} = T_p\eta $ (see Figure~\ref{fig:ex_reflection2}), there exists a unique reflected direction, necessarily tangent to $ \eta $, which coincides with $ u $.
\end{cor}
\begin{proof}
Consider the double manifold $ \overline{Q}_1 \cup_{\textup{Id}} \overline{Q}_1 $ (as in Remark~\ref{rem:double}) and observe that, when $ u $ is $ \C^1 $-transverse to $ \eta $:
\begin{itemize}
\item If $ T_p\eta $ is $ \C^1 $-timelike, then $ \C^1_p \cap Q^1_p \not= \emptyset $ and we can apply the first part of Theorem~\ref{thm:existence_refraction} in $ \overline{Q}_1 \cup_{\textup{Id}} \overline{Q}_1 $:
\begin{itemize}
\item[$ \circ $] If $ \Pi_u^1 $ is $ \C^1 $-spacelike, case (A)(i) directly converts into case (A$^*$)(i).
\item[$ \circ $] If $ \Pi_u^1 $ is $ \C^1 $-lightlike (so $ u \in T_p\eta $), case (A)(ii) tells us that there exists a unique reflected direction $ w \in \C_p^1 $, necessarily tangent to $ \eta $. Since the incident direction $ u \in \C^1_p \cap T_p\eta $ is a valid reflected direction in this situation, we conclude that $ w = u $.
\item[$ \circ $] $ \Pi_u^1 $ cannot be $ \C^1 $-timelike, since $ \Pi_u^1 \subset u^{\perp_{L_1}} $ but $ u^{\perp_{L_1}} $ only contains $ \C^1 $-spacelike or $ \C^1 $-lightlike directions.
\end{itemize}
\item If $ T_p\eta $ is $ \C^1 $-lightlike, the existence of an incident direction $ u \in \C^1_p $ with $ u^{\perp_{\C^1}} \not= T_p\eta $ implies that $ \C^1_p \cap Q^1_p = \emptyset $. Then, the last statement of Theorem~\ref{thm:existence_refraction} in $ \overline{Q}_1 \cup_{\textup{Id}} \overline{Q}_1 $ ensures the existence of a unique reflected direction, necessarily tangent to $ \eta $.
\item If $ T_p\eta $ is $ \C^1 $-spacelike, the existence of an incident direction $ u \in \C^1_p $ means that $ \C^1_p \subset Q_p^2 $, so there cannot be reflected directions because $ \C^1_p \cap (Q_p^1 \cup T_p\eta) = \emptyset $.
\end{itemize}
When $ u^{\perp_{\C^1}} = T_p\eta $, the last statement of Theorem~\ref{thm:existence_refraction} in $ \overline{Q}_1 \cup_{\textup{Id}} \overline{Q}_1 $ directly implies that $ u \in \C^1_p \cap T_p\eta $ is the unique reflected direction.
\end{proof}

\begin{figure}
\centering
\begin{subfigure}{0.4\textwidth}
\includegraphics[width=\textwidth]{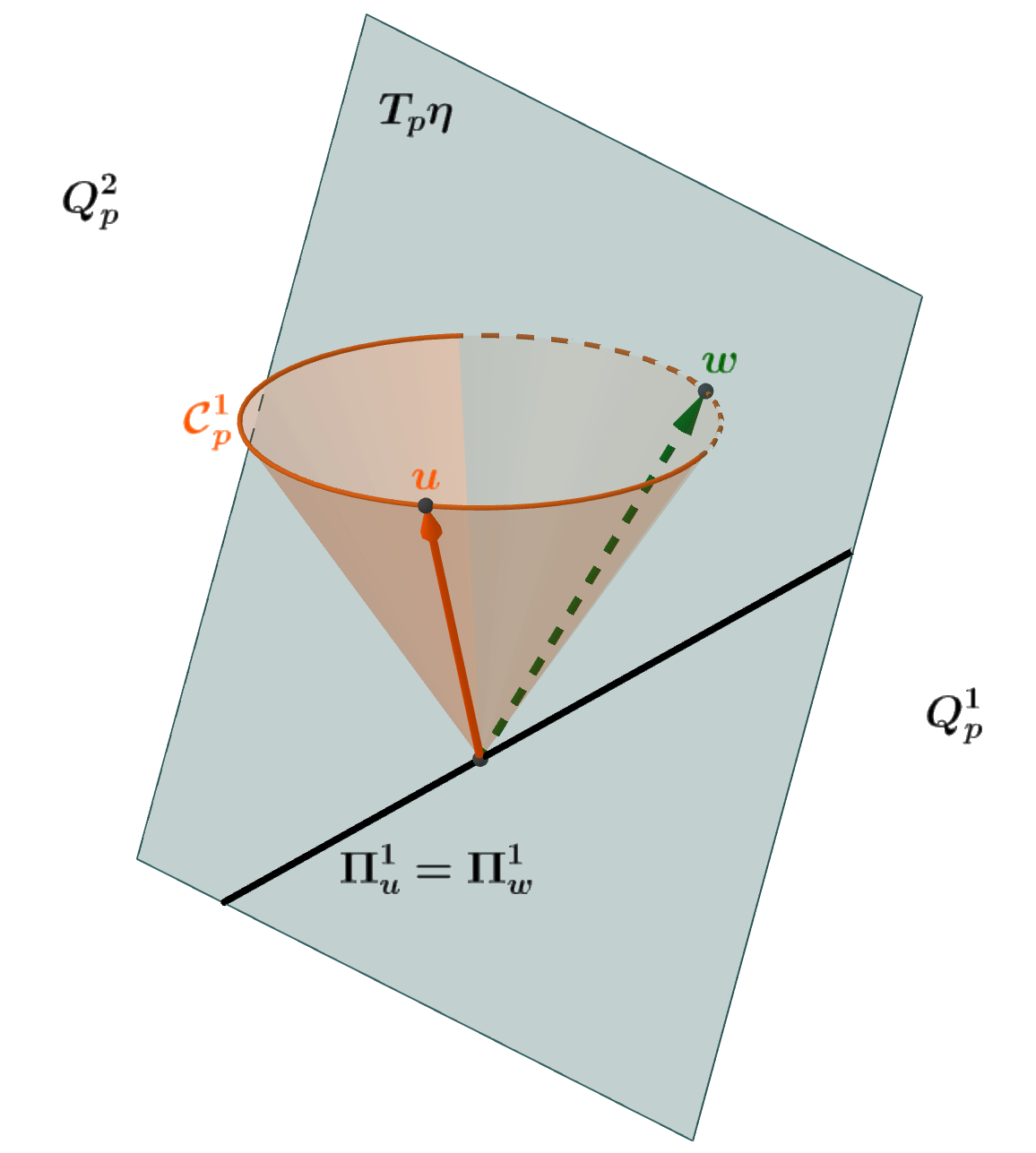}
\caption{Case (A$^*$)(i).}
\label{fig:case_asi}
\end{subfigure}
\begin{subfigure}{0.4\textwidth}
\includegraphics[width=\textwidth]{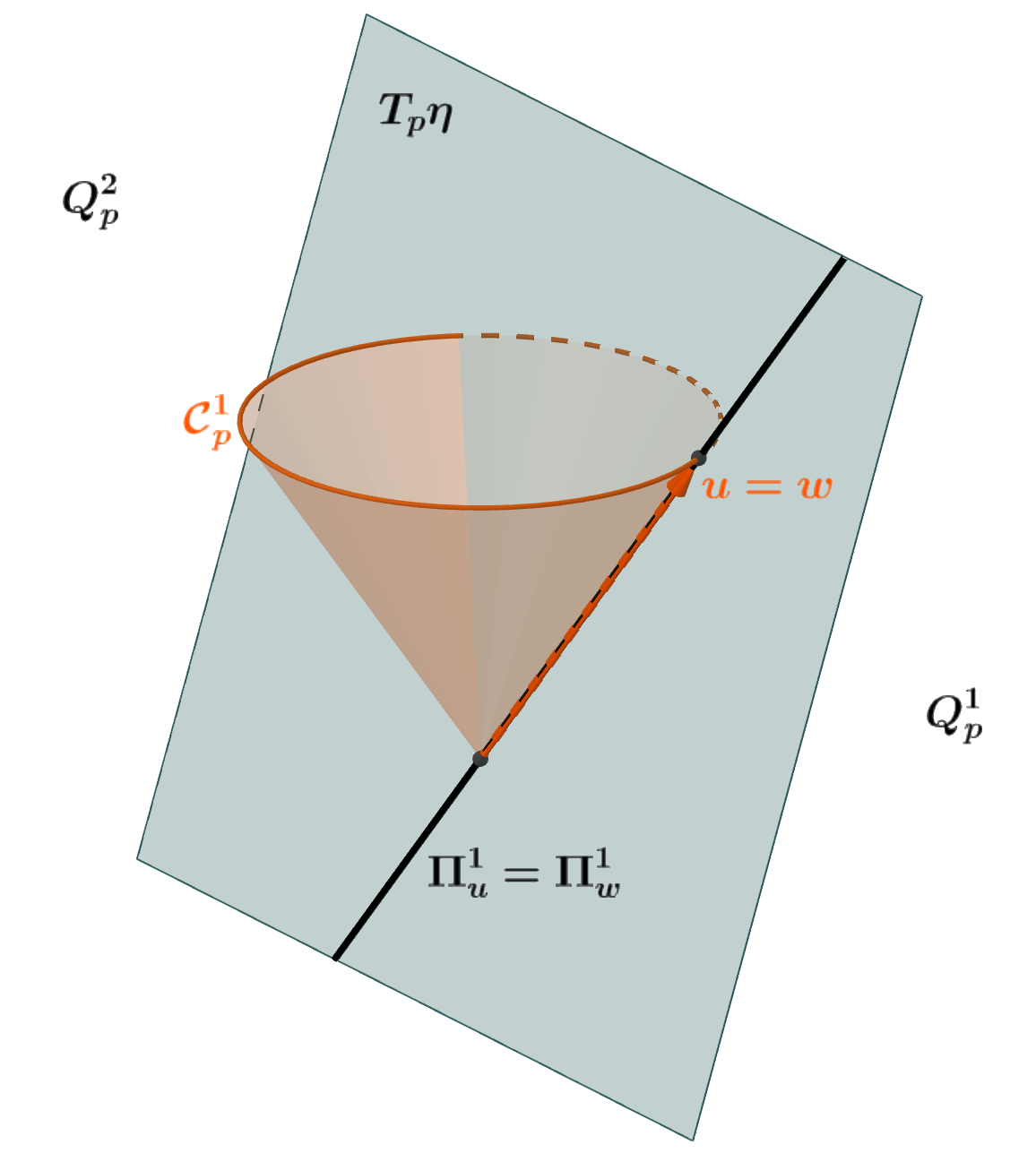}
\caption{Case (A$^*$)(ii).}
\label{fig:case_asii}
\end{subfigure}
\caption{Case (A$^*$) of Corollary~\ref{thm:existence_reflection}, when $ T_p\eta $ is $ \C^1 $-timelike. On the left, $ \Pi_u^1 $ is $ \C^1 $-spacelike and $ w \in Q^1_p $ is the unique reflected direction. On the right, $ \Pi_u^1 $ is $ \C^1 $-lightlike and $ w = u \in T_p\eta $ is the unique reflected direction.}
\end{figure}

\begin{figure}
\centering
\begin{subfigure}{0.4\textwidth}
\includegraphics[width=\textwidth]{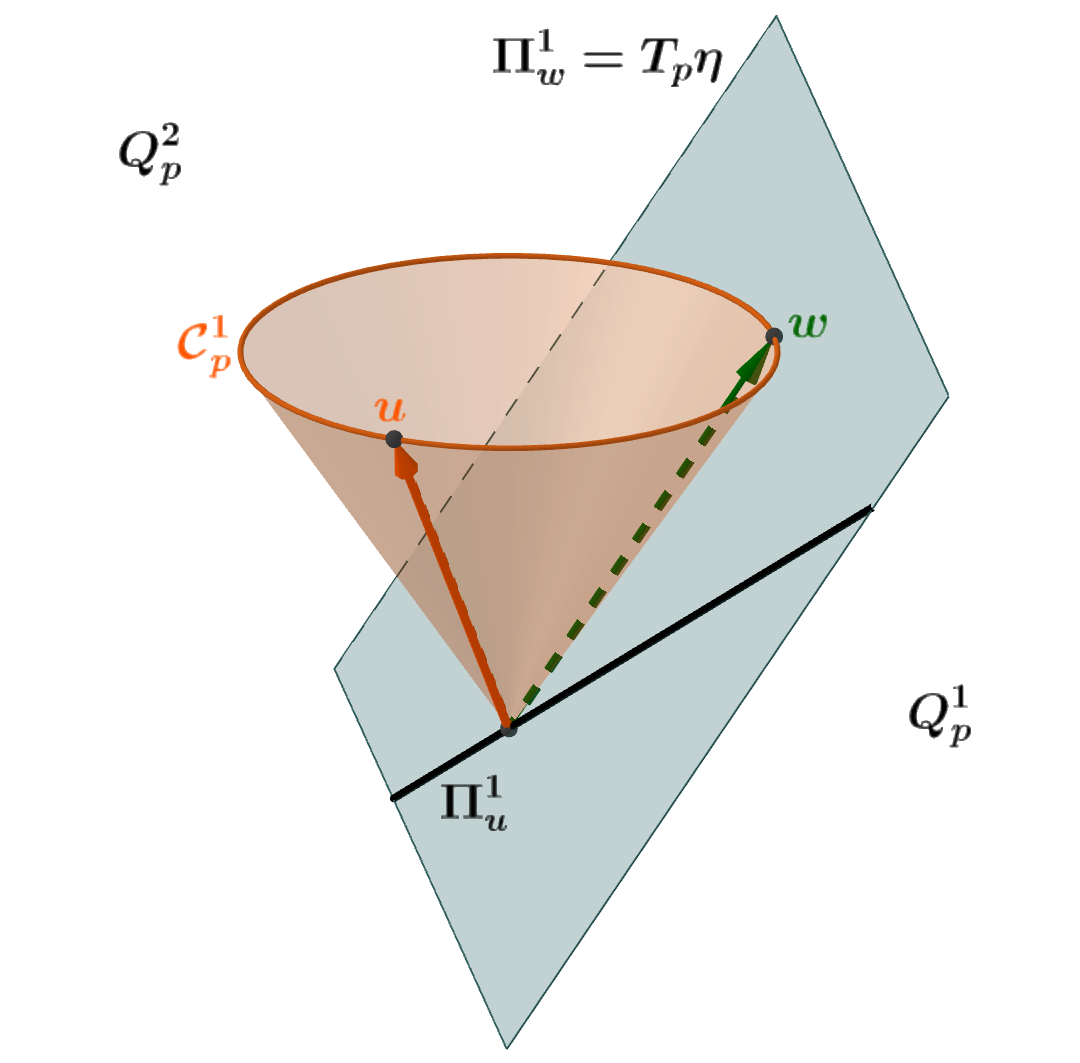}
\caption{Case (B$^*$).}
\label{fig:ex_reflection}
\end{subfigure}
\begin{subfigure}{0.4\textwidth}
\includegraphics[width=\textwidth]{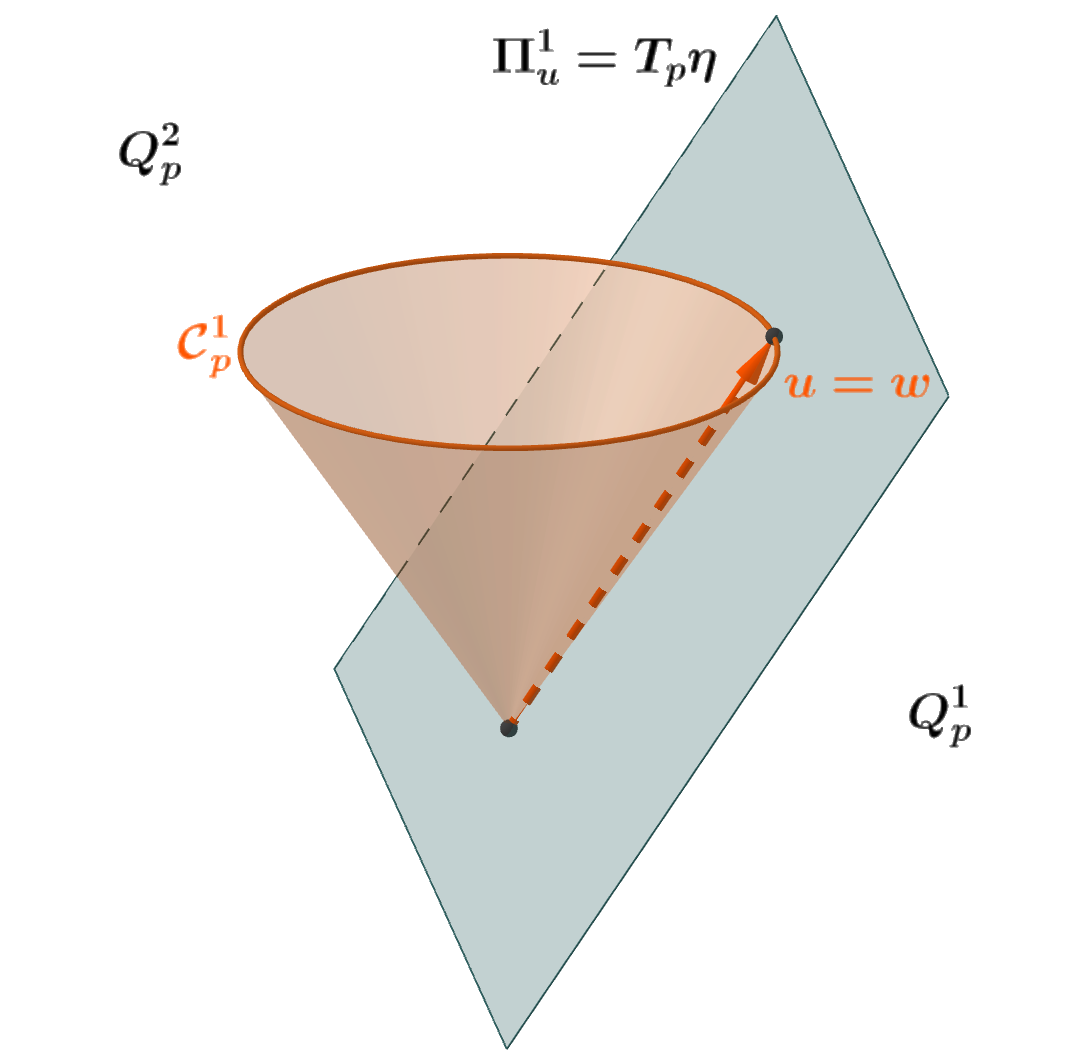}
\caption{$ u^{\perp_{\C^1}} = T_p\eta $.}
\label{fig:ex_reflection2}
\end{subfigure}
\caption{On the left, case (B$^*$) of Corollary~\ref{thm:existence_reflection}, when $ T_p\eta $ is $ \C^1 $-lightlike and $ u $ is $ \C^1 $-transverse to $ \eta $. On the right, the final case of Corollary~\ref{thm:existence_reflection}, when $ u^{\perp_{\C^1}} = T_p\eta $. In both cases, $ \C^1_p $ is tangent to $ T_p\eta $ along the unique reflected direction $ w \in T_p\eta $; however, $ w \not= u $ on the left, whereas $ w = u $ on the right.}
\end{figure}

Table~\ref{tab:reflection} summarizes all the possibilities for the existence and uniqueness of reflected directions when $ u $ is $ \C^1 $-transverse to $ \eta $. Note that $ \C^2 $ plays no role here.

\begin{table}
\begin{center}
\begin{tabular}{cccc}
\multicolumn{4}{c}{\textbf{Reflected directions}} \\
\multicolumn{4}{c}{(for $ \C^1 $-transverse incident directions)} \\ \midrule\midrule
$ \boldsymbol{T_p\eta} \ \boldsymbol{\backslash} \ \boldsymbol{\Pi_u^1} $ & $ \boldsymbol{\C^1} $\textbf{-spacelike} & $ \boldsymbol{\C^1} $\textbf{-lightlike} & $ \boldsymbol{\C^1} $\textbf{-timelike} \\ \midrule\midrule
\multirow{2}{*}{$ \boldsymbol{\C^1} $\textbf{-timelike}} & $ \exists! $ ($ \not\in T_p\eta $) & $ \exists! $ ($ = u \in T_p\eta $) & \multirow{2}{*}{Not possible} \\
 & Figure~\ref{fig:case_asi} & Figure~\ref{fig:case_asii} & \\ \midrule
\multirow{2}{*}{$ \boldsymbol{\C^1} $\textbf{-lightlike}} & $ \exists! $ ($ \in T_p\eta $) & Not possible & \multirow{2}{*}{Not possible} \\
 & Figure~\ref{fig:ex_reflection} & ($ \Leftrightarrow u^{\perp_{\C^1}} = T_p\eta $) & \\ \midrule
$ \boldsymbol{\C^1} $\textbf{-spacelike} & No returning & Not possible & Not possible \\ \midrule\midrule
\end{tabular}
\captionsetup{singlelinecheck=off}
\caption[Table 2]{All possible proper reflected directions (from Corollary~\ref{thm:existence_reflection}) associated with an incident direction $ u $ at $ p \in \eta $, assuming that $ u $ is $ \C^1 $-transverse to $ T_p\eta $. The cases of non-existence are classified as follows: {\em no returning} ($ \C_p^1 \subset Q_p^2 $, so cone geodesics cannot return to $ \overline{Q}_1 $); and {\em not possible} (incompatible causal characters of $ T_p\eta $ and $ \Pi_u^1 $). In particular, $ T_p\eta $ and $ \Pi_u^1 $ are simultaneously $ \C^1 $-lightlike if and only if $ u^{\perp_{\C^1}} = T_p\eta $ (see Figure~\ref{fig:ex_reflection2}).}
\label{tab:reflection}
\end{center}
\end{table}

\subsection{The total reflection phenomenon}
\label{subsec:total_reflection}
When $ T_p\eta $ is $ \C^1 $- and $ \C^2 $-timelike---which is the most natural situation---case (A)(iii) of Theorem~\ref{thm:existence_refraction} is known as the {\em total reflection phenomenon}: there is no refraction, but the existence of reflection is guaranteed by case (A$^*$) of Corollary~\ref{thm:existence_reflection}, so the incident ray is totally reflected without being able to cross into the second medium.\footnote{There are other exotic situations, e.g. cases (A)(iii) and (B$^*$) can happen simultaneously, in which case there is no refraction nor reflection.} Case (A)(ii), when the refracted direction is tangent to $ \eta $, marks the limit between the existence and non-existence of the refracted trajectory (see Figure~\ref{fig:case_aii}).

In the classical isotropic case, the total reflection occurs when the speed of the wave in the second medium is greater than in the first one (see Example~\ref{ex:classical_laws} below and also \cite[Lemma~5.1]{MP23} for a similar result in the classical anisotropic case). In our generalized setting, this translates into having incident directions along $ \eta $ which are $ \C^2 $-timelike.

\begin{prop}
Let $ p \in \eta $ such that $ T_p\eta $ is both $ \C^1 $- and $ \C^2 $-timelike. For every incident direction $ u \in \C_p^1 \cap T_p\eta $ which is $ \C^2 $-timelike, there exists an open conic neighborhood $ \Omega \subset \C^1_p $ around $ u $ where there is total reflection---i.e., there is a unique reflection and no refraction for any incident direction in $ \Omega \setminus Q^1_p $.
\end{prop}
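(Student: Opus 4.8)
The plan is to read off total reflection from the causal character of $\Pi^1_{u'}$ and to show that this character is stable under small perturbations of the incident direction. Recall that, by case (A)(iii) of Theorem~\ref{thm:existence_refraction}, an incident direction $u'$ admits \emph{no} refracted direction precisely when $T_p\eta$ is $\C^2$-timelike and $\Pi^1_{u'}$ is $\C^2$-timelike; at the same time, since $T_p\eta$ is $\C^1$-timelike, case (A$^*$) of Corollary~\ref{thm:existence_reflection} guarantees a \emph{unique} reflected direction for every incident $u'$ (as $\Pi^1_{u'}$ can then only be $\C^1$-spacelike or $\C^1$-lightlike). Thus the statement reduces to producing an open conic neighborhood $\Omega$ of $u$ in $\C^1_p$ on which $\Pi^1_{u'}$ stays $\C^2$-timelike for every incident $u' \in \Omega \setminus Q^1_p$.

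First I would verify the base point: $\Pi^1_u$ is itself $\C^2$-timelike. Because the cone $\C^1_p$ is conic, its radial direction is tangent to it, so $u \in T_u\C^1_p = u^{\perp_{\C^1}}$, i.e. $u \perp_{\C^1} u$ (compare Remark~\ref{rem:orth}(3)). Together with the hypothesis $u \in T_p\eta$, this gives $u \in u^{\perp_{\C^1}} \cap T_p\eta = \Pi^1_u$. Since $u$ is $\C^2$-timelike by assumption, $\Pi^1_u$ contains a $\C^2$-timelike vector and is therefore $\C^2$-timelike. This elementary observation—that the incident direction $u$ itself witnesses the $\C^2$-timelike character of $\Pi^1_u$—is the crux of the argument.

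Next I would propagate this to nearby directions. Because $T_p\eta$ is $\C^1$-timelike, the transversality condition $(u')^{\perp_{\C^1}} \neq T_p\eta$ holds automatically for every $u' \in \C^1_p$ (Remark~\ref{rem:directions}(1)), so $\Pi^1_{u'} \coloneqq (u')^{\perp_{\C^1}} \cap T_p\eta$ is a well-defined $(n-1)$-plane of $T_p\eta$ for all such $u'$. Since the tangent hyperplane $(u')^{\perp_{\C^1}} = T_{u'}\C^1_p$ depends smoothly on $u' \in \C^1_p$ and the transversality persists, the assignment $u' \mapsto \Pi^1_{u'}$ is continuous into the Grassmannian of $(n-1)$-planes of $T_p\eta$. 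The collection of those planes meeting the open cone $A^2_p$ of $\C^2$-timelike vectors is open, and $\Pi^1_u$ belongs to it; pulling back along the continuous map (and using the scale-invariance of all notions involved) yields an open conic neighborhood $\Omega \subset \C^1_p$ of $u$ on which $\Pi^1_{u'}$ remains $\C^2$-timelike.

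Finally, for any incident direction $u' \in \Omega \setminus Q^1_p$—note that $\Omega \setminus Q^1_p \subset Q^2_p \cup T_p\eta$ consists of genuine incident directions, and $u$ itself lies there, so this set is nonempty—the two ingredients combine: case (A)(iii) of Theorem~\ref{thm:existence_refraction} rules out any refracted direction, while case (A$^*$) of Corollary~\ref{thm:existence_reflection} supplies a unique reflected direction. This is exactly total reflection on $\Omega \setminus Q^1_p$. I expect the main obstacle to be the continuity step: one must confirm that $\Pi^1_{u'}$ varies continuously in the Grassmannian (which relies on the smooth dependence of $T_{u'}\C^1_p$ on $u'$ together with the persistent transversality) so that the \emph{open} condition ``$\C^2$-timelike'' genuinely transfers from $u$ to a full neighborhood; everything else follows by directly invoking the existence theorems.
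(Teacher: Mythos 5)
Your proposal is correct and follows essentially the same route as the paper: the key observation that $u \in \Pi^1_u$ (so $\Pi^1_u$ is $\C^2$-timelike), the openness of that condition to obtain the conic neighborhood $\Omega$, and the conclusion via case (A)(iii) of Theorem~\ref{thm:existence_refraction} together with case (A$^*$) of Corollary~\ref{thm:existence_reflection}. You merely spell out in more detail the continuity of $u' \mapsto \Pi^1_{u'}$ into the Grassmannian, which the paper leaves implicit.
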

\begin{proof}
If $ u \in \C^1_p \cap T_p\eta $ is $ \C^2 $-timelike, then $ \Pi_u^1 $ (which contains $ u $) is also $ \C^2 $-timelike. Therefore, there is an open conic neighborhood $ \Omega \subset \C^1_p $ around $ u $ such that $ \Pi_{\hat{u}}^1 $ is still $ \C^2 $-timelike for any $ \hat{u} \in \Omega $. So, for the incident directions in $ \Omega \setminus Q^1_p $, we are in case (A)(iii) of Theorem~\ref{thm:existence_refraction} and thus, there is no refraction. Additionally, since $ T_p\eta $ is $ \C^1 $-timelike, there is always a unique reflection by case (A$^*$) of Corollary~\ref{thm:existence_reflection}.
\end{proof}

In particular, if $ \C_p^1 $ is entirely contained in the cone domain of $ \C_p^2 $ (e.g., as in Figure~\ref{fig:case_a}), there is total reflection around any $ u \in \C_p^1 \cap T_p\eta $.

\section{Snell cone geodesics}
\label{sec:cone_geodesics}
As we have seen in Theorem~\ref{thm:snell}(I) and Corollary~\ref{thm:reflection}(II), refracted and reflected trajectories (introduced in Definition~\ref{def:directions}) are, in general, critical points of the arrival time functional. Here we will take a step forward and identify the trajectories that can be, in addition, global extrema. Throughout this section, if $ \gamma_u $ is an incident trajectory and $ \gamma_r $ its corresponding reflected or refracted one, we will use the notation $ \gamma_u \cup \gamma_r $ to denote the complete trajectory resulting from the concatenation of $ \gamma_u $ and $ \gamma_r $.

\subsection{Topologically transverse local causality}
\label{subsec:gen_causality}
First, we need to generalize some causality notions already defined in Definition~\ref{def:causality} to the extended setting $ (Q,\C^1 \cup \C^2) $. Since we have so far restricted our study to topologically transverse curves---so that, in particular, incident, refracted, and reflected trajectories are topologically transverse by definition---the ``futures'' and ``pasts'' considered here will be only those generated by such curves. This entails the following restrictions:
\begin{itemize}
\item Causal curves are not allowed to travel along the interface. Although this excludes some interesting global trajectories that ``take advantage'' of the interface to minimize the arrival time (see, e.g., \cite[\S~5]{MP23}), removing this restriction would lead to highly pathological spacetimes in which even the so-called push-up property fails: there may exist points in the chronological future that are not accessible by timelike curves (see Figure~\ref{fig:push_up}). This phenomenon typically occurs when the regularity of the metric is lowered \cite{samann2016} (so that additional hypotheses are required in Lorentzian length spaces to recover this property; see \cite[Theorems~3.20, 4.17 and 4.18]{kunzinger2018}).

\item Causal curves are not allowed to cross the interface more than once. This is sufficient for our purposes here, since we are only concerned with the local behaviour of causal curves in an arbitrarily small neighborhood $ \Omega $ of the intersection point with the interface.
\end{itemize}

\begin{figure}
\centering
\includegraphics[width=0.5\textwidth]{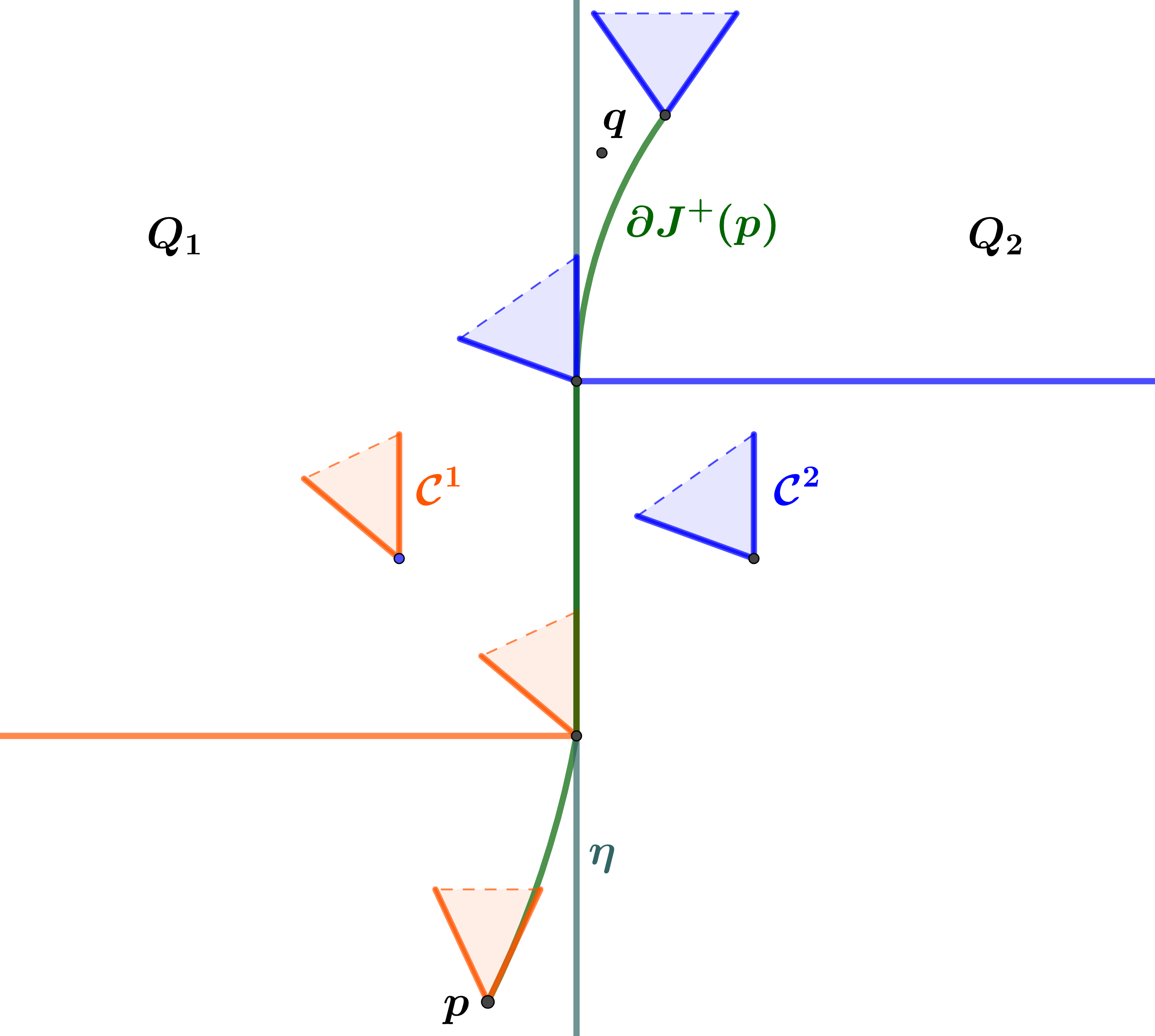}
\caption{Illustration of the failure of the push-up property when non-topologically transverse curves are taken into account to compute the causal futures. The cone structure $ \C^1 $ remains constant above the left half-line, while $ \C^2 $ is constant below the right half-line. The curve $ \partial J^+(p) $ forms the boundary of the causal future of $ p \in Q_1 $, so $ q \in Q_2 $ lies in the interior of $ J^+(p) $ but cannot be reached by any timelike curve.}
\label{fig:push_up}
\end{figure}

\begin{defi}
\label{def:gen_causality}
Let $ \gamma: [a,b] \rightarrow Q $ be a piecewise smooth curve topologically transverse to $\eta $, with $ \gamma(\tau) \in \eta $ for some $ \tau \in (a,b) $, and $ \gamma|_{[a,\tau)} \subset Q_{\mu} $, $ \gamma|_{(\tau,b]} \subset Q_{\nu} $ for some $ \mu, \nu = 1,2 $. We say that $ \gamma $ is {\em timelike}, {\em lightlike}, {\em causal} or {\em spacelike} if both $ \gamma|_{[a,\tau]} $ and $ \gamma|_{[\tau,b]} $ are so in $ (\overline{Q}_{\mu},\C^{\mu}) $ and $ (\overline{Q}_{\nu},\C^{\nu}) $, respectively.

Using these extended causal curves---in addition to the usual $ \C^{\mu} $-causal curves entirely contained in $ \overline{Q}_{\mu} $ (intersecting $ \eta $ at most once)---the {\em (topologically transverse) chronological} and {\em causal futures} and {\em pasts} $ I^{\pm}(p), J^{\pm}(p) $, introduced as in Definition~\ref{def:causality}, are naturally extended to $ (Q,\C^1 \cup \C^2) $. We also define the {\em horismos} as $ E^{\pm}(p) \coloneqq J^{\pm}(p) \setminus I^{\pm}(p) $ and, given an open subset $ \Omega \subset Q $, the subscript $ \Omega $ indicates that we only consider causal curves entirely contained in $ \Omega $ (as in Definition~\ref{def:causality}).
\end{defi}

Theorem~\ref{thm:existence_refraction} and Corollary~\ref{thm:existence_reflection} list all possible refracted and reflected directions. When we look at the trajectories they generate, however, there is a key distinction that extends the notion of cone geodesic.

\begin{defi}
A continuous curve $ \gamma: I \rightarrow Q $ is a {\em Snell cone geodesic} if it is locally horismotic, as in Definition~\ref{def:cone_geodesic}, using the (topologically transverse) chronological and causal futures and pasts extended to $ (Q,\C^1 \cup \C^2) $, as in Definition~\ref{def:gen_causality}.
\end{defi}

\begin{rem}
Note that if $ \gamma: [a,b] \rightarrow Q $ is a refracted (resp. reflected trajectory), then $ \gamma|_{[a,\tau]} $ is a cone geodesic of $ \C^1 $, $ \gamma|_{[\tau,b]} $ is a cone geodesic of $ \C^2 $ (resp. $ \C^1 $) and both are, in particular, Snell cone geodesics. Therefore, the complete trajectory $ \gamma $ is a Snell cone geodesic if and only if it is locally horismotic at $ \gamma(\tau) \in \eta $.
\end{rem}

The motivation for this notion lies in the fact that cone geodesics of $ \C^{\mu} $ are always (locally) time-minimizing, in the following sense. If $ \gamma $ is a cone geodesic of $ \C^{\mu} $ entirely contained in $ Q_{\mu} $, then for any $ p_0=\gamma(t_0) $ it satisfies, on the one hand, that $ \gamma(t_1) \in E^+_{Q_{\mu}}(p_0) $ for sufficiently small $ t_1-t_0 > 0 $; and on the other hand, if $\gamma(t_2) \in E^+_{Q_\mu}(p_0)$ at some $ t_2>t_1 $, then any (future-directed) $ \C^{\mu} $-timelike curve $ \alpha $ through $ \gamma(t_1) $ enters $ I^+_{Q_\mu}(p_0) $. Therefore, $ \gamma $ is ``first-arriving'' at $ \alpha $, i.e. it is the global minimum of the arrival time functional among $ \C^{\mu} $-causal curves from $ p_0 $ to $ \alpha $.

Snell cone geodesics extend this notion to the setting $ (Q,\C^1 \cup \C^2) $, preserving this minimizing property even when the interface $ \eta $ comes into play. Hence, these curves determine the horismos close to the interface and constitute the unique global extrema (among topologically transverse causal curves). In contrast, when a refracted or reflected trajectory is not a Snell cone geodesic, it cannot be a global time-minimizer (since it already fails to be so locally), although it is still a critical point of the arrival time functional.

Hence, our goal here is to determine whether a refracted or reflected trajectory yields a Snell cone geodesic. In particular, we will restrict our attention to curves transverse to $ \eta $ (in the sense of Definition~\ref{def:c_transverse}). Under this restriction, the classical expectation would be that reflected trajectories are never Snell cone geodesics, while refracted trajectories always are. However, particularly in the case of refraction, this question turns out to be rather subtle: as we will see in the next subsection, an unusual arrangement of the cones (from a classical perspective; see Figure~\ref{fig:reversely}) may disrupt the local horismoticity.

\subsection{Orientation induced by the lightcones}
\label{subsec:orientation}
Interestingly, when the trajectory is transverse to $ \eta $, the property of being a Snell cone geodesic is related to a specific orientation that can be naturally induced on the orthogonal hyperplanes to the incident, refracted and reflected directions. The following definition and results establish a criterion to determine, through a simple construction based on this orientation, whether or not a Snell cone geodesic is obtained.

\begin{notation}
We will only consider orientations in the tangent space at $ p \in \eta $ (therefore, global  orientability will not pose any issue). Specifically, we will focus on the possible concordance of orientations on hyperplanes $ H $ that divide $ T_pQ $. In this context, an {\em orientation} of $ H $ is simply the choice of one of the two sides of $ H $ (i.e., one of the two regions of $ T_pQ \setminus H $) or, equivalently, the choice of an oriented direction transverse to $ H $.
\end{notation}

\begin{defi}
\label{def:orientation}
Let $ r \in \C_p^{\mu} $ be an incident, reflected or refracted direction transverse to $ \eta $ ($ \mu = 1 $ when incident or reflected, $ \mu = 2 $ when refracted). Among the two possible orientations of the hyperplane $ r^{\perp_{\C^{\mu}}} \not= T_p\eta $ (which is tangent to $ \C_p^{\mu} $), consider the one induced by the side where $ \C_p^{\mu} $ is entirely contained. We define $ H_r^{\mu} \subset T_pQ $ as the oriented half-hyperplane $ r^{\perp_{\C^{\mu}}} \cap (Q^{\mu}_p \cup T_p\eta) $ (recall Notation~\ref{not:subspaces}), with the orientation inherited from $ r^{\perp_{\C^{\mu}}} $.

If $ u \in \C_p^1 $ is an incident direction and $ r \in \C_p^{\mu} $ is an associated reflected or refracted direction (both transverse to $ \eta $), then $ H_u^1 \cup H_r^{\mu} \subset T_pQ $ is a broken hyperplane, with the two half-hyperplanes glued along $ \Pi_u^1 = \Pi_r^{\mu} $. This broken hyperplane divides $ T_pQ $ into two subspaces. We say that $ H_u^1 \cup H_r^{\mu} $ is {\em straight oriented} if the orientations of the two half-hyperplanes point to the same subspace (see Figure~\ref{fig:straight}). Otherwise, we say that $ H_u^1 \cup H_r^{\mu} $ is {\em reversely oriented} (see Figure~\ref{fig:reversely}).
\end{defi}

\begin{figure}
\centering
\begin{subfigure}{0.4\textwidth}
\includegraphics[width=\textwidth]{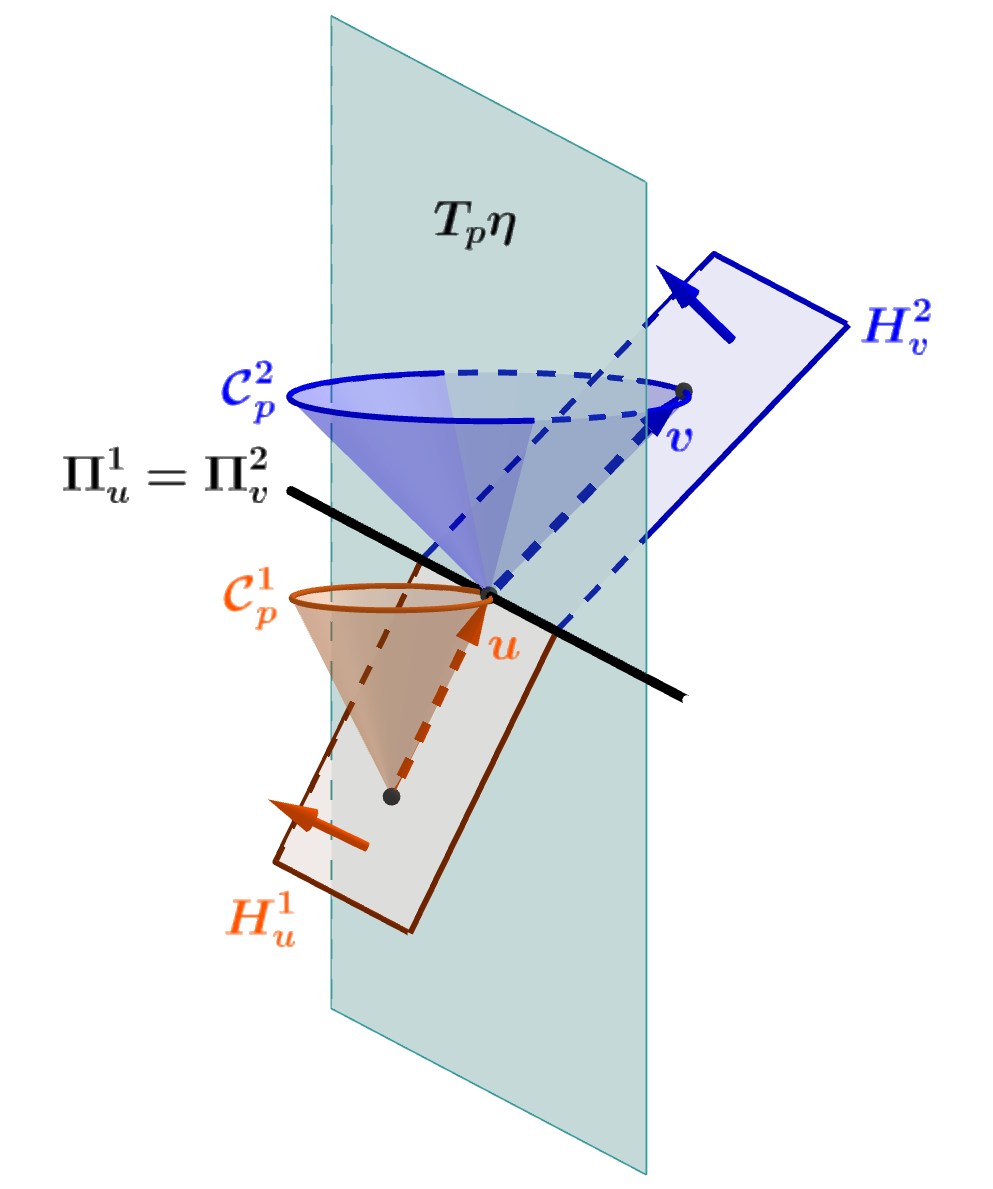}
\caption{$ H_u^1 \cup H_v^2 $ straight oriented.}
\label{fig:straight}
\end{subfigure}
\begin{subfigure}{0.4\textwidth}
\includegraphics[width=\textwidth]{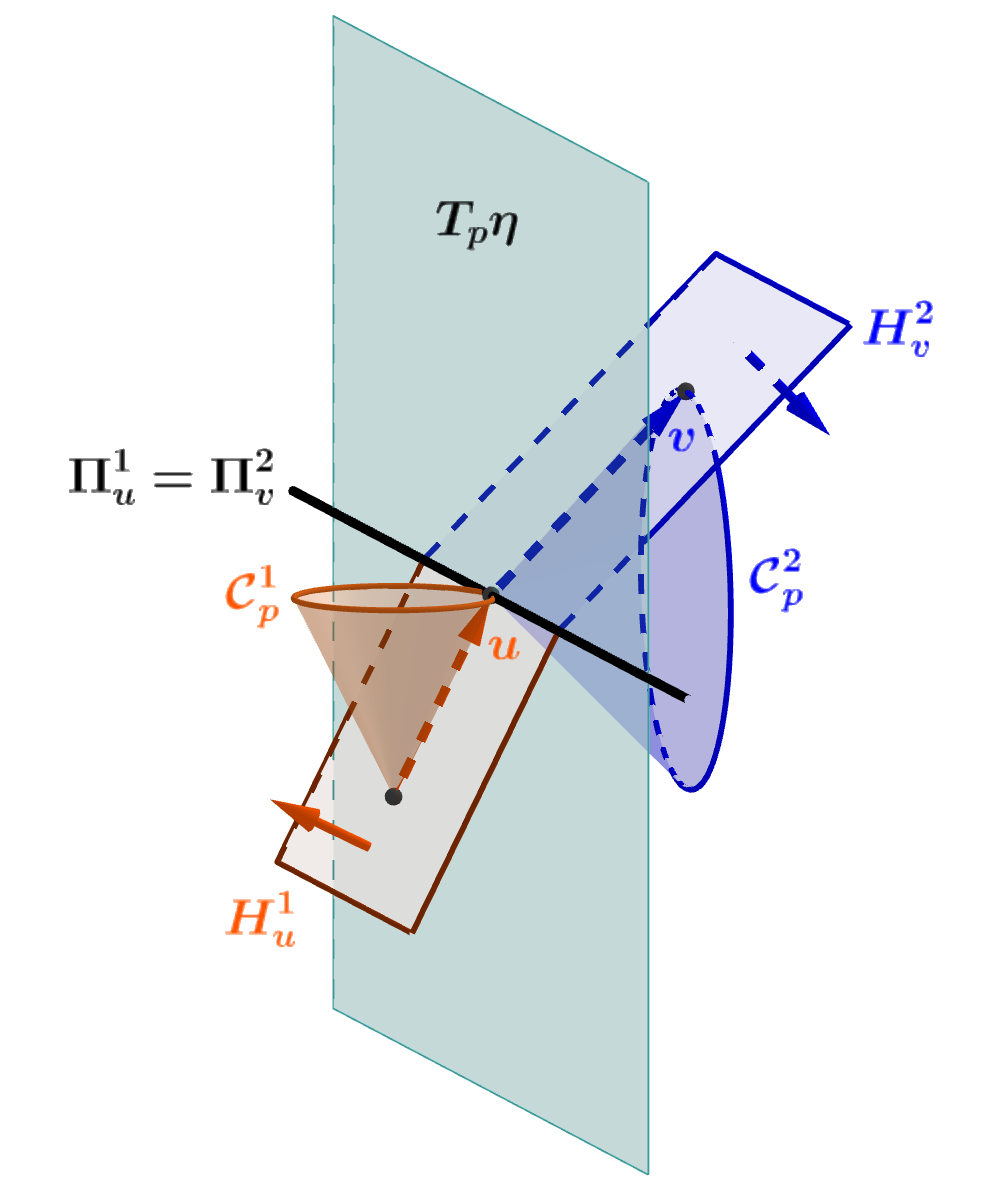}
\caption{$ H_u^1 \cup H_v^2 $ reversely oriented.}
\label{fig:reversely}
\end{subfigure}
\caption{Two different orientations of the broken hyperplane $ H_u^1 \cup H_v^2 $ for the same incident and refracted directions $ u, v $, depending on the position of the cone $\C_p^2$.}
\label{fig:broken_hyperplane}
\end{figure}

\begin{lemma}
\label{lem:strong_convexity}
Let $ \gamma_u: [a,\tau] \rightarrow \overline{Q}_1 $ be an incident trajectory and $ \gamma_r: [\tau,b] \rightarrow \overline{Q}_{\mu} $ an associated reflected or refracted one ($ \mu=1 $ when reflected, $ \mu=2 $ when refracted) at $ p \coloneqq \gamma_u(\tau) = \gamma_r(\tau) \in \eta $, such that $ \gamma_u \cup \gamma_r $ is transverse to $ \eta $. There exists a small coordinate chart $ (\Omega,\varphi) $ around $ p $ adapted to $ \eta $ (namely, $\eta \equiv \{x^n=0\}$) such that $E^+_\Omega(\gamma_u(t))=\partial J^+_{\Omega}(\gamma_u(t))$, $ E^-_{\Omega}(\gamma_r(t))=\partial J^-_{\Omega}(\gamma_r(t)) $ and the following property holds: given $k>0$, for any $ t $ close enough to $ \tau $, $ E^+_\Omega(\gamma_u(t)) \cap \eta $ and $ E^-_\Omega(\gamma_r(t)) \cap \eta $ have a definite second fundamental form at $ p $ (as hypersurfaces of $ \eta $, with respect to the induced Euclidean metric associated with $ (\Omega,\varphi) $) with norm bigger than $ k $.
\end{lemma}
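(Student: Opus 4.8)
The plan is to reduce the statement, via a convex normal chart, to a second–order computation on the horismos of a \emph{single} cone structure, and then to read off the second fundamental form of its slice with $\eta$ directly from the non-radial strong convexity of the cone (Definition~\ref{def:cone_structure}(ii)). First I would fix a Lorentz–Finsler metric $L_1$ (resp.\ $L_\mu$) compatible with $\C^1$ (resp.\ $\C^\mu$) and choose $\Omega$ to be a small normal neighbourhood of $p$, with coordinates $\varphi$ adapted so that $\eta\cap\Omega\equiv\{x^n=0\}$ and $p$ is the origin. For $\Omega$ small enough the restricted cone structure is stably causal, and by the local causality theory for cone structures underlying \S~\ref{subsec:cones} the future and past horismos coincide with the boundaries of the causal future and past, $E^+_\Omega(q)=\partial J^+_\Omega(q)$ and $E^-_\Omega(q)=\partial J^-_\Omega(q)$, these being smooth hypersurfaces away from $q$ generated by the cone geodesics issuing from $q$. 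Since reaching $\eta$ from $\gamma_u(t)\in Q_1$ (resp.\ arriving at $\gamma_r(t)\in Q_\mu$) only involves curves travelling through a single region, each slice is governed by a single cone structure; moreover the two assertions are interchanged by time reversal, so it suffices to treat $E^+_\Omega(\gamma_u(t))\cap\eta$ with $u=\dot\gamma_u(\tau^-)$.

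\textbf{Collapse of the slice.} Writing $\epsilon\coloneqq\tau-t>0$ and using that $\gamma_u$ is a cone geodesic with $\dot\gamma_u(\tau^-)=u$, we have $q\coloneqq\gamma_u(t)=p-\epsilon u+O(\epsilon^2)$. Next I would expand the horismos through the exponential map of the Chern connection,
\begin{equation*}
\exp_q(s\ell)=q+s\ell-\tfrac{s^2}{2}\,\Gamma_q(\ell,\ell)+o(s^2),\qquad \ell\in\C^1_q,
\end{equation*}
and impose $x^n(\exp_q(s\ell))=0$ to obtain the slice. Because $u^{\perp_{\C^1}}$ is transverse to $T_p\eta$, for directions $\ell$ near $u$ this equation solves for the affine parameter with $s=O(\epsilon)$, so the whole slice collapses to $p$ at rate $\epsilon$. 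Parametrising $\C^1_q$ near the radial direction $u$ as a graph over its tangent hyperplane $u^{\perp_{\C^1}}=T_u\C^1_p$ (whose intersection with $T_p\eta$ is exactly $\Pi^1_u$, of dimension $n-1$ by transversality) and substituting back expresses $E^+_\Omega(q)\cap\eta$ as a graph over $\Pi^1_u$ whose quadratic part is, to leading order, $\tfrac1\epsilon$ times the second fundamental form of $\C^1_p$ along $u$.

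\textbf{Definiteness and blow-up.} The heart of the argument is the identification of this quadratic part. The non-radial strong convexity of $\C^1_p$ says precisely that the second fundamental form $Q$ of $\C^1_p$ at $u$ is positive semi-definite with radical \emph{exactly} the radial line $\mathbb{R}u$. When $u\notin T_p\eta$ the subspace $\Pi^1_u$ is transverse to $\mathbb{R}u$, so $Q|_{\Pi^1_u}$ is positive definite; hence, after a parabolic rescaling dilating the $\Pi^1_u$-directions by $\epsilon^{-1/2}$, the slice is $C^2$-close to the fixed paraboloid determined by $Q|_{\Pi^1_u}$. Undoing the rescaling, the second fundamental form of $E^+_\Omega(\gamma_u(t))\cap\eta$ at $p$ (with respect to the Euclidean metric of $\varphi$) equals $\tfrac1\epsilon\,Q|_{\Pi^1_u}$ up to an $O(1)$ error, so it is definite with norm $\sim C/\epsilon$; choosing $t$ close enough to $\tau$ makes this exceed any prescribed $k>0$. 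The same computation applied to $\gamma_r$, with $\C^\mu$, $r=\dot\gamma_r(\tau^+)$ and $\epsilon=t-\tau$, handles $E^-_\Omega(\gamma_r(t))\cap\eta$.

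\textbf{Main obstacle.} The difficulty is twofold. First, one must check that the $O(1)$ corrections — from the nonlinearity of the cone geodesics (the $\Gamma_q(\ell,\ell)$ term) and from the dependence of $\C^1$ on the base point — are genuinely subdominant to the $1/\epsilon$ term; the parabolic rescaling is the clean device for this, as it sends all such corrections to zero in the limit. Second, and more delicate, is the borderline tangent case $u\in T_p\eta$: then $\mathbb{R}u\subset\Pi^1_u$ lies in the radical of $Q$, the leading term degenerates in the $u$-direction, and definiteness is not immediate. Here the missing curvature has to be recovered from the second-order data encoding how $\gamma_u$ bends off $\eta$ (the very transversality that distinguishes genuine refraction/reflection from the exceptional cases of Remarks~\ref{rem:exceptional} and~\ref{rem:exceptional_reflection}); controlling this degenerate direction while still extracting the $1/\epsilon$ blow-up in the transverse directions is the technical crux of the proof.
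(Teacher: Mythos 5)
Your route is the same as the paper's: work in a convex neighbourhood where $ E^+_\Omega(\gamma_u(t))=\exp_{\gamma_u(t)}(\C^1_{\gamma_u(t)}) $, extract the $ 1/(\tau-t) $ blow-up of the second fundamental form from the homothety relating the cone at the unit level to the level $ (\tau-t)\dot\gamma_u(t) $ together with non-radial strong convexity, and show that the contribution of the exponential map is $ O(1) $ and hence negligible. The only organisational difference is that the paper estimates the second fundamental form of the whole horismos on the preimage of $ \Pi^1_u $ and then passes to the slice with $ \eta $ via the Gauss formula, rather than solving $ x^n\circ\exp_q=0 $ for the affine parameter; this is cosmetic.

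There are, however, two genuine gaps. First, the expansion $ \exp_q(s\ell)=q+s\ell-\tfrac{s^2}{2}\Gamma_q(\ell,\ell)+o(s^2) $ with a remainder uniform over directions is precisely the delicate point, not a starting assumption: for a non-Berwald Lorentz--Finsler metric the exponential map is only $ C^{1,1} $ at the origin, the Christoffel symbols being discontinuous at the zero section, so the uniform boundedness of the second-order corrections must be proved. The paper does this by writing $ \exp_{\gamma(t)}(rw)=\gamma(t)+rw+r^2E^t(r,w) $ with $ E^t $ smooth in $ (r,w) $ and checking in Cartesian coordinates (via $ \partial_{x^i}=\tfrac{x^i}{r}\partial_r+r^{-1}X_i $, where the $ r^{-1} $ factors cancel against $ r^2 $) that the second derivatives stay bounded as $ r\to 0 $; your ``parabolic rescaling'' names this step rather than carrying it out. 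Second, and more seriously, you explicitly defer the tangent case, which lies inside the scope of the lemma: incident, refracted and reflected directions tangent to $ \eta $ do occur (e.g.\ case (A)(ii) of Theorem~\ref{thm:existence_refraction} and case (A$^*$)(ii) of Corollary~\ref{thm:existence_reflection}), and the lemma is invoked for them in Theorem~\ref{thm:orientation}. When $ u\in T_p\eta $ one has $ u\in\Pi^1_u $, so $ \Pi^1_u $ meets the radical of the cone's second fundamental form and the leading term is only semi-definite; declaring this ``the technical crux'' and stopping leaves the statement unproved exactly where it is hardest. (In the paper this is encoded in the assertion that the preimage $ \Pi^t $ of $ \Pi^1_u $ under the differential of the exponential avoids the radial direction and is therefore spacelike; in whichever formulation, that is the claim your proposal must actually establish before the blow-up argument applies.)
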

\begin{proof}
We will consider first an incident trajectory $ \gamma_u$, which for simplicity will be denoted $ \gamma $ along the proof. The main idea of the proof is to check first that the second fundamental form of the lightcone of $T_{\gamma(t)}\overline{Q}_1$ intersected with a certain hyperplane is definite and then to check that this property remains true for the image of the lightcone by the exponential map, which, if we are in a convex neighborhood $ \Omega $, is the {\em horismotic lightcone} of $\gamma(t)$:
\begin{equation}\label{e_E}
E^+_\Omega(\gamma(t)):=  J^+_{\Omega}(\gamma(t)))\setminus I^+_{\Omega}(\gamma(t))= \textup{exp}_{\gamma(t)}(\C^1_{\gamma(t)}), \quad \text{locally on } \overline{Q}_1.
\end{equation}
For this aim, the proof is divided into four parts.

\medskip
\noindent
(a) {\em  Preliminaries and setting}. We will use the following properties related to the regularity of the exponential map, which are known by specialists. As a first observation, recall that a Lorentz-Finsler metric $L$ (as well as a Finsler one where $L=F^2$) is $C^2$ at zero if and only if it comes from a semi-Riemannian metric (see \cite[Proposition~4.1]{Warner65} and also \cite[Footnote~16]{JS20}). As a consequence, the Christoffel symbols are not even continuous at the zero section and the exponential map is smooth if and only if the metric is of Berwald type \cite{AZ88}. In any case, the exponential map is always $C^{1,1}$, as proven explicitly by S. Ivanov according to H. Koehler \cite[Proposition~4.1]{Koe14}, and its second derivatives are bounded, as observed by R. Bryant and explained below around \eqref{exploc} and Footnote~\ref{footRobert}. Moreover, as early as 1933, the existence of convex neighborhoods was proven by J. Whitehead \cite{Whitehead}.

Let $\Omega$ be a convex neighborhood of $\gamma(\tau)$ for the extension beyond $\eta$ of a Lorentz-Finsler metric $L_1$ compatible  with  $\C^1$, and assume that $ \gamma $ is parametrized as a geodesic of $ L_1 $ and $\varepsilon>0$ is such that $\gamma([\tau-\varepsilon,\tau])\subset \Omega$.   This means that $ \gamma|_{[\tau-\varepsilon,\tau]} \subset E^+_\Omega(\gamma(\tau-\varepsilon)) \cap \overline{Q}_1 $ and thus, $ \gamma|_{[\tau-\varepsilon,\tau]} $ is a null generator of $ E^+_\Omega(\gamma(\tau-\varepsilon)) \cap \overline{Q}_1 $ (see \cite[Proposition~4]{JP22}, where null generators are identified as integral curves of a unique vector field). Hence, recalling that $u=\dot\gamma(\tau)$ and $p=\gamma(\tau)$, we have that $ u^{\perp_{\C^1}} = T_p(E^+_\Omega(\gamma(\tau-\varepsilon)) \cap \overline{Q}_1) $ (see \cite[Proposition~1(2)]{JP22}),  which implies that 
\begin{equation}\label{Piu}
  \Pi_u^1 = T_p\left(E^+_\Omega(\gamma(\tau-\varepsilon) )\cap \eta\right),
\end{equation}  
where we recall that $ \Pi_u^1 \coloneqq u^{\perp_{\C^1}}\cap T_p\eta$.

\medskip
\noindent
(b) {\em  Convexity estimates at the tangent space}.
Since, by \eqref{e_E}, $E^+_\Omega(\gamma(t)) $ is the image by $\exp_{\gamma(t)}$ of (a piece of) $\C^1_{\gamma(t)}$, we will study first the second fundamental form of $\C^1_{\gamma(t)}$ at $(\tau-t)\dot\gamma(t)$ (recall that $\exp_{\gamma(t)}((\tau-t)\dot\gamma(t))=\gamma(\tau)$) in a subspace of its tangent space mapped by the exponential map to $ \Pi_u^1$, because this is the tangent space to $E^+_\Omega(\gamma(t))\cap \eta$. Let us consider a reference frame $E_0,E_1,\ldots,E_{n}$ along $\gamma$ adapted to $ \Pi_u^1$ in the sense that the image by $d(\exp_{\gamma(t)})_{(\tau-t)\dot\gamma(t)}$ of ${\rm Span }\{E_1,\ldots,E_{n-1}\}$ is  $ \Pi_u^1$ (here we are identifying the tangent space to $T_{\gamma(t)}Q$ with itself in the natural way). We will choose as  $E_0$ any timelike vector field along $\gamma$. Moreover, we will choose $E_1(\tau),\ldots,E_{n-1}(\tau)$ as a basis of the subspace $\Pi^1_{\dot\gamma(\tau)}$, and then
$E_i(t):=(d(\exp_{\gamma(t)})_{(\tau-t)\dot\gamma(t)})^{-1}(E_i(\tau))$ for $i=1,\ldots,n-1$. Observe that 
\[\Pi^t:={\rm Span}\{E_1(t),\ldots,E_{n-1}(t)\}\]
is a $ \C^1 $-spacelike subspace of $T_{\gamma(t)}Q$, because $d(\exp_{\gamma(t)})$ maps the tangent subspace to $\C^1_{\gamma(t)}$ into the tangent subspace to $E^+_\Omega(\gamma(t))$ and $\Pi^t$ does not contain the only direction in the radical of this tangent subspace, namely, the one generated by $\dot\gamma(t)$ (thanks to the fact that $\gamma$ is transverse to $\eta$). As a consequence, $ \Pi^t $ is tangent to $ \C^1_{\gamma(t)} $. Finally, we will choose $E_n$ in such a way that $\{E_1(t),E_2(t),\ldots,E_n(t)\}$ generates a $ \C^1 $-spacelike hypersurface of $T_{\gamma(t)}Q$. For simplicity, we will also assume that $E_0(t)+E_n(t)=\dot\gamma(t)$, which is always possible by choosing suitably $E_0$ and $E_n$.

In this setting, we have a cone triple for the restriction of $\C^1$ along $\gamma$. Notice that the elements of this triple are determined by $\C^1_{\gamma(t)}$, taking $ E_0 $ as the $ \C^1 $-timelike vector field and the dual one-form with kernel $ {\rm Span}\{E_1(t),\ldots,E_n(t)\}$ (recall Remark~\ref{rem:comments}(4)). Then, we obtain a Finsler metric on the fiber bundle generated by 
$E_1,\ldots,E_n$ along $\gamma$ with the property that if $\pi:T_{\gamma(t)}Q\rightarrow {\rm Span}\{E_1(t),\ldots,E_n(t)\}$ is the natural projection using the basis $E_0(t),\ldots,E_n(t)$, it holds that
\[v\in\C^1_{\gamma(t)} \text{ if and only if } v=F^t(\pi(v))E_0(t)+\pi(v).\]
Now, observe that for every $t\in [\tau-\varepsilon,\tau]$, the intersection of the lightcone $\C^1_{\gamma(t)}$ in the tangent space $T_{\gamma(t)}Q$ with the affine hyperplane 
\[H^t:=E_0(t)+{\rm Span}\{E_1(t),\ldots,E_n(t)\}\] projects into the indicatrix of $F^t$ on ${\rm Span}\{E_1(t),\ldots,E_n(t)\}$.  Moreover, the (affine) second fundamental form $\sigma^t$ of the indicatrix of $F^t$ with respect to the opposite to the position vector,   in this case $-E_n(t)$,  coincides with the fundamental tensor of $F^t$, denoted $g^t$, restricted to the tangent space to the indicatrix, and therefore it is positive definite  (see e.g. \cite[Equation~(2.5)]{JS14}).  Furthermore, as a consequence,  the second fundamental form $\sigma^t_\C$ of $\C^1_{\gamma(t)}$ at $(\tau-t)\dot\gamma(t)$ with respect to $-E_n(t)$
satisfies that 
\begin{equation}\label{sigmatg}
\sigma^t_\C|_{\Pi^t\times\Pi^t}=\frac{1}{\tau-t}g^t|_{\Pi^t\times\Pi^t}.
\end{equation}
Notice that the restriction on the left side corresponds to the second fundamental form of $( \tau-t)H^t \cap \C^1_{\gamma(t)}$, which is related by a homothety to
$H^t \cap \C^1_{\gamma(t)}$, a strongly convex hypersurface of $H^t$ with respect to $-E_n$. 
\medskip
\newline
\emph{Convention:} In the following, we will choose any coordinates in the convex neighborhood adapted to the interface $\eta$, and consider the Euclidean metric associated with these coordinates. We will estimate the values of second fundamental forms using this metric, which consistently implies to change the transversal vector $-E_n$. Anyway, the choice of transversal will be irrelevant for the estimates.

\medskip
\noindent
{\em (c) Estimates of the exponential}. Now we will use the exponential map $\exp_{\gamma(t)}$ to bring all the information about vector lightcones $ \C^1_{\gamma(t)} $ on $T_{\gamma(t)}Q$ to the horismotic lightcones $E^+_{\Omega}(\gamma(t))$ on the convex neighborhood $\Omega$. We are interested in an estimation of the second fundamental form of $E^+_{\Omega}(\gamma(t))$ at $\gamma(\tau)$ in the directions given by $\Pi^1_{\dot\gamma(t)}$. Next, we will go into details about the exponential map mentioned in the part (a), following some ideas of Robert Bryant\footnote{\label{footRobert}See the link of {\em MathOverflow} (date 19/06/2026): \url{https://mathoverflow.net/questions/479094/how-badly-does-the-geodesic-exponential-map-fail-to-be-c2-on-finsler-manifold}.} to bound the second derivatives of the exponential map.
Let $S$ be the unit sphere of ${\mathds R}^{n+1}$ and identify locally $Q$ with ${\mathds R}^{n+1}$ using the chart $(\Omega,\varphi)$ adapted to $\eta$. Then, it is possible to define a smooth map $E^t:(-\epsilon,\epsilon)\times S\rightarrow \mathds R^{n+1}$ (which also varies smoothly with $t\in [\tau-\epsilon, \tau]$) in such a way that 
\begin{equation}\label{exploc}
\exp_{\gamma(t)}(r w)= \gamma(t)+rw+r^2E^t(r,w),
\end{equation}
for all $r>0$ and $w\in S$. This is a consequence of the fact that the map ${\rm Exp}^t:(-\epsilon,\epsilon)\times S\rightarrow \mathds R^{n+1}$ defined as ${\rm Exp}^t(r,w)=\gamma_w(r)$, being $\gamma_w$ the geodesic with initial velocity $w\in S\subset {\mathds R}^{n+1}\equiv T_{\gamma(t)}Q$, is smooth as the Christoffel symbols are smooth on the unit spheres of $T_{\gamma(t)} Q$ (computed with the Euclidean metric of the Cartesian coordinates). Moreover, a Taylor expansion of order 1 in $r$ leads to \eqref{exploc} and the compactness in the directions $w$ (as well as  the restriction to values of  $t$ close to $\tau$) yield a uniform bound on its second derivatives. Of course, the map $E^t(0,w)$ does not take the same value for all $w$, and as a consequence, $\exp_{\gamma(t)}$ is not necessarily $C^2$ at the origin. 

\medskip
\noindent
{\em (d) Convexity estimates for the horismos $E^+_\Omega(\gamma(t))$}. Next, following the Convention above (end of part $(b)$), we will compute the second fundamental form $\hat\sigma^t_\C$ of $\C_{\gamma(t)}$ with the above Cartesian coordinates first and, then, the one $\sigma^t_{E^+}$ of $E^+_{\Omega}(\gamma(t))$ and finally $\sigma^t_{{\mathcal P}^t}$ of $E^+_{\Omega}(\gamma(t))\cap \eta$. Consider the normal vector field $ N^t$ at $(\tau-t)\dot\gamma$ pointing to the interior of $\C_{\gamma(t)}$. We have that $N^t=f(t) (-E_n(t))+P_t$, with $P_t$ tangent to $\C_{\gamma(t)}$ at $(\tau-t)\dot\gamma$  and $f$ smooth and positive on a compact interval. Then, as the change in the transverse vector to compute the second fundamental form only adds the factor $f(t)$, using \eqref{sigmatg}, it follows that
 \[\hat\sigma_\C^t|_{\Pi^t\times \Pi^t}=f(t) \sigma_\C^t|_{\Pi^t\times \Pi^t} =f(t) \frac{1}{(\tau-t)} g^t|_{\Pi^t\times \Pi^t}.\] As $g^t$ is positive definite for every $t$ with smooth dependence, this implies that $\hat\sigma_\C^t|_{\Pi^t\times\Pi^t}$ is also positive definite and unbounded when $t$ gets close to $\tau$. Now observe that by \eqref{exploc}, the second fundamental form of the horismotic lightcone $E^+_\Omega(\gamma(t))$
in \eqref{e_E} is $\hat\sigma_\C^t$ plus a term depending on the second partial derivatives of $r^2E^t(r,w)$.
The Cartesian coordinates vector fields of $\mathds R^{n+1}\equiv T_{\gamma(t)}Q$ can be expressed as
\[\partial_{x^i}= \frac{x^i}{r}\partial_r+r^{-1} X_i,\]
being $X_i$ tangent to the sphere.\footnote{Just observe that $\frac{\partial r}{\partial x^i}=\frac{x^i}{r}$ and locally, the coordinates of the sphere $S$ satisfy that $w^i=f^i\left(\frac{x^1}{r},\ldots,\frac{x^n}{r}\right)$ for certain smooth functions $f^i$ on the sphere. Therefore,
$\partial_{x^i}=\frac{\partial r}{\partial x^i} \partial_r+\frac{\partial w^k}{\partial x^i}\partial_{w^k}=\frac{x^i}{r}\partial_r+ \frac{1}{r}\left(\delta^l_{\ i}-\frac{x^lx^i}{r^2}\right)\frac{\partial f^k}{\partial u^l}\partial_{w^k}$.} This means that when we compute the second derivatives of $r^2 E^t(r,w)$ in Cartesian coordinates, both $r^{-1}$ cancel out with $r^2$ while $|\frac{x^i}{r}|\leq 1$. This implies that these derivatives are bounded when computed close to  $r=0$. As a consequence, given $k>0$, by taking $t$ close enough to $\tau$, we can assume that the second fundamental form $\sigma^t_{E^+}$ of the horismotic lightcone $E^+_{\Omega}(\gamma(t)) $ restricted to $\Pi^1_{u}$ is positive definite and has norm bigger than $k$. Now observe that the submanifold ${\mathcal P}^t \coloneqq E^+_{\Omega}(\gamma(t))\cap \eta $ has also $\Pi^1_{u}$ as a tangent space and by the Gauss formula, any curve $\beta:(-\delta,\delta)\rightarrow{\mathcal P}^t $ parametrized by the arc length with $\beta(0)=\gamma(\tau)$ and $\dot\beta(0)=w$ satisfies 
\begin{equation}\label{gaussformula}
\|\ddot\beta(0)\|^2=\|(\ddot\beta(0))^\top\|^2+\sigma^t_{E^+}(w,w)^2,
\end{equation}
where $(\ddot\beta(0))^\top$ denotes the tangent part to ${\mathcal P}^t $.
Finally, if we denote by $\sigma_{\mathcal P^t}$ the second fundamental form  of ${\mathcal P}^t$ at $\gamma(\tau)$,  when considering as $\beta$ a unit geodesic of ${\mathcal P}^t$ (with respect to the induced Euclidean metric on $\mathcal{P}^t$), it follows that
$\|\ddot\beta(0)\|=\sigma_{\mathcal P^t}(w,w)$ and, using  \eqref{gaussformula}:
\[\sigma_{\mathcal P^t}(w,w)^2=\|(\ddot\beta(0))^\top\|^2+\sigma^t_{E^+}(w,w)^2\geq \sigma^t_{E^+}(w,w)^2\geq k^2,\]
which concludes that the norm of $\sigma_{\mathcal P^t}$ is also bigger than $k$.\footnote{Consistently with the Convention above, here we have used the Euclidean metric to write Gauss formula and curvature estimates in a standard Euclidean way. However, the technicality of changing the transverse vector field from $-E_n$ to the Euclidean $N^t$ could be avoided just considering the affine Gauss equation for $-E_n$ (as, for example, in \cite[Proposition 1.1]{NS94}) and using the auxiliary Euclidean metric just to determine unit vectors and verify the divergence of the second fundamental form.}

The proof for the associated refracted or reflected trajectory is analogous just considering the reverse exponential map. 
\end{proof}

\begin{thm}
\label{thm:orientation}
Let $ \gamma_u: [a,\tau] \rightarrow \overline{Q}_1 $ be an incident trajectory and $ \gamma_r: [\tau,b] \rightarrow \overline{Q}_{\mu} $ an associated reflected or refracted one ($ \mu=1 $ when reflected, $ \mu=2 $ when refracted), such that $ \gamma_u \cup \gamma_r $ is transverse to $ \eta $. Then, $ \gamma_u \cup \gamma_r $ is a Snell cone geodesic if and only if the broken hyperplane $ H_u^1 \cup H_r^{\mu} $ is straight oriented.
\end{thm}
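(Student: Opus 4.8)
The plan is to reduce the statement to a purely local question at $p\coloneqq\gamma_u(\tau)=\gamma_r(\tau)$ and then to settle it by comparing two wavefronts on $\eta$ with the help of Lemma~\ref{lem:strong_convexity}. By the remark preceding the theorem, each piece $\gamma_u$, $\gamma_r$ is already a cone geodesic of its cone structure, hence a Snell cone geodesic; therefore $\gamma_u\cup\gamma_r$ is a Snell cone geodesic if and only if it is locally horismotic at $p$. Fixing the adapted convex chart $(\Omega,\varphi)$ of Lemma~\ref{lem:strong_convexity}, this is equivalent to proving that, for $t_1<\tau<t_2$ close enough to $\tau$, the point $\gamma_r(t_2)$ lies in the extended horismos $E^+(\gamma_u(t_1))=J^+(\gamma_u(t_1))\setminus I^+(\gamma_u(t_1))$.

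Next I would translate this extended causal relation into a condition on $\eta$. Set $A\coloneqq J^+_\Omega(\gamma_u(t_1))\cap\eta$ and $B\coloneqq J^-_\Omega(\gamma_r(t_2))\cap\eta$, which, using $E^\pm_\Omega=\partial J^\pm_\Omega$ from Lemma~\ref{lem:strong_convexity}, are the two closed regions of $\eta$ bounded by the wavefronts $\mathcal P_u\coloneqq E^+_\Omega(\gamma_u(t_1))\cap\eta$ and $\mathcal P_r\coloneqq E^-_\Omega(\gamma_r(t_2))\cap\eta$. Since every extended causal curve from $\gamma_u(t_1)$ to $\gamma_r(t_2)$ within $\Omega$ crosses $\eta$ at a single point $x$ and splits into a causal piece in $\overline Q_1$ and one in $\overline Q_\mu$, one gets $\gamma_r(t_2)\in J^+(\gamma_u(t_1))\Leftrightarrow A\cap B\neq\emptyset$, and $\gamma_r(t_2)\in I^+(\gamma_u(t_1))\Leftrightarrow \mathrm{int}(A)\cap\mathrm{int}(B)\neq\emptyset$ (gluing two timelike pieces at an interior crossing yields an extended timelike curve, by the definition of causality in $(Q,\C^1\cup\C^2)$). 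As $\gamma_u$, $\gamma_r$ are cone geodesics hitting $\eta$ exactly at $p$ along the lightlike directions $u$, $r$, the apex $p$ always belongs to $A\cap B$ with $T_p\mathcal P_u=\Pi_u^1=\Pi_r^\mu=T_p\mathcal P_r$; hence $\gamma_u\cup\gamma_r$ is a Snell cone geodesic if and only if $\mathrm{int}(A)\cap\mathrm{int}(B)=\emptyset$ in a neighbourhood of $p$.

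It then remains to decide this tangential-contact alternative for the two convex regions $A$, $B$, which meet at $p$ with common boundary tangent $\Pi_u^1$, and to match it with the orientation of $H_u^1\cup H_r^\mu$. Choosing a direction $\xi$ transverse to $\Pi_u^1$ inside $T_p\eta$ and writing $\mathcal P_u=\{\xi=f_u(y)\}$, $\mathcal P_r=\{\xi=f_r(y)\}$ as graphs over $\Pi_u^1$ (with $f_\bullet(0)=0$, $df_\bullet(0)=0$), the interiors are disjoint near $p$ precisely when $f_u-f_r$ has constant sign there, i.e.\ when the difference of Hessians $D^2f_u(0)-D^2f_r(0)$ --- the difference of the Euclidean second fundamental forms of the wavefronts --- is definite. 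The side of $\mathcal P_u$ on which $A$ lies is exactly the one distinguished by the orientation of $H_u^1$ (the side toward which $\C^1_p$ opens), and likewise for $B$, $\mathcal P_r$ and $H_r^\mu$; while the curving direction of each wavefront is that of the corresponding conic section of the tangent lightcone at $p$. A direct computation in this flat tangent-cone model shows that $H_u^1\cup H_r^\mu$ is straight oriented exactly when $A$ and $B$ curve away from each other, giving $\mathrm{int}(A)\cap\mathrm{int}(B)=\emptyset$, and reversely oriented exactly when they overlap. Finally, Lemma~\ref{lem:strong_convexity} upgrades this model computation to the genuine curved wavefronts: taking $t_1,t_2$ close to $\tau$ makes the second fundamental forms of $\mathcal P_u$, $\mathcal P_r$ definite with norm larger than any prescribed $k$, so their quadratic parts dominate the uniformly bounded exponential-map corrections (the term $r^2E^t$ in \eqref{exploc}), and the sign of $f_u-f_r$ --- hence the contact/overlap dichotomy --- is dictated solely by the orientation. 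The reflected case $\mu=1$ then follows by applying the refracted case in the double manifold $\overline Q_1\cup_{\mathrm{Id}}\overline Q_1$ (Remark~\ref{rem:double}).

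The main obstacle is this orientation-matching step: one must verify, uniformly as the apex approaches $\eta$, that the side and curving direction of each wavefront region are governed by the abstract orientation of $H_r^\mu$ (the side where $\C^\mu_p$ lies) together with the strong convexity of the cone --- rather than by incidental choices --- and that the divergence of the second fundamental forms provided by Lemma~\ref{lem:strong_convexity} genuinely reduces the curved comparison to the flat tangent-cone one.
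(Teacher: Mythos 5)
Your overall strategy is the same as the paper's: reduce to local horismoticity at $p=\gamma_u(\tau)$, translate that into the condition that $J^+_{\Omega}(q_{\varepsilon})\cap\eta$ and $J^-_{\Omega}(q_{\delta})\cap\eta$ (with $q_{\varepsilon}=\gamma_u(\tau-\varepsilon)$, $q_{\delta}=\gamma_r(\tau+\delta)$) meet only at $p$, control the two wavefronts $E^{\pm}_{\Omega}\cap\eta$ via Lemma~\ref{lem:strong_convexity}, and match the outcome with the orientation of $H_u^1\cup H_r^{\mu}$. The reduction in your first two paragraphs is sound and is exactly what the paper does. The gap is in the decisive third step, which you yourself flag as the ``main obstacle'' and leave as an unexecuted ``direct computation'': as stated, your criterion there is not correct. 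Disjointness of $\mathrm{int}(A)$ and $\mathrm{int}(B)$ is \emph{not} equivalent to $f_u-f_r$ having constant sign, and a fortiori not to definiteness of $D^2f_u(0)-D^2f_r(0)$: it depends on which side of its boundary wavefront each region occupies. If both regions open toward the same side of $p+\Pi_u^1$, their interiors overlap (they share the open set $\{\xi>\max(f_u,f_r)\}$, which accumulates at $p$) no matter how the two Hessians compare; and in the opposite-sides case no comparison of the two second fundamental forms is needed either. So the difference of Hessians is a red herring.

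The paper's mechanism avoids comparing the two wavefronts to each other. Lemma~\ref{lem:strong_convexity} makes the second fundamental form of each of $E^+_{\Omega}(q_{\varepsilon})\cap\eta$ and $E^-_{\Omega}(q_{\delta})\cap\eta$ definite (with large norm), so, after shrinking $\Omega$, $\varepsilon$, $\delta$, each wavefront lies strictly on one side of the \emph{fixed} affine hyperplane $p+\Pi_u^1$ inside $\eta$, touching it only at $p$; hence so does each region $J^{\pm}_{\Omega}\cap\eta$. The whole question then reduces to ``same side or opposite sides of $p+\Pi_u^1$'', and this is read off from the orientations: $I^+_{\Omega}(q_{\varepsilon})$ accumulates at $p$ from the side of $u^{\perp_{\C^1}}$ containing $\C^1_p$ (the orientation of $H_u^1$), while $I^-_{\Omega}(q_{\delta})$ accumulates from the side of $r^{\perp_{\C^{\mu}}}$ \emph{opposite} to $\C^{\mu}_p$; tracing both on $T_p\eta$, the two regions fall on opposite sides of $\Pi_u^1$ exactly when the two orientations point into the same component of $T_pQ\setminus(H_u^1\cup H_r^{\mu})$, i.e., exactly when the broken hyperplane is straight oriented. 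Opposite sides gives $J^+_{\Omega}(q_{\varepsilon})\cap\eta\cap J^-_{\Omega}(q_{\delta})=\{p\}$, hence no timelike connection and horismoticity; same side gives points of $I^+_{\Omega}(q_{\varepsilon})\cap\eta\cap I^-_{\Omega}(q_{\delta})$ arbitrarily close to $p$, destroying horismoticity. This is the computation you defer, and it is where the proof actually lives.
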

\begin{proof}
Let $ \Omega \subset Q $ be an open neighborhood of $ p \coloneqq \gamma_u(\tau) $ (where we restrict the computation of the causal futures and pasts) satisfying Lemma~\ref{lem:strong_convexity}, which in particular is contained in a convex neighborhood with coordinates adapted to $\eta$,  and take $ \varepsilon > 0 $ small enough so that $ q_{\varepsilon} \coloneqq \gamma_u(\tau-\varepsilon) \in \Omega $ and $ q_{\varepsilon} \rightarrow_{\Omega} p $, i.e. $ p \in E^+_{\Omega}(q_{\varepsilon}) $. Moreover, by \eqref{Piu},  $\Pi_u^1 = T_p(E^+_{\Omega}(q_{\varepsilon}) \cap \eta) $. Analogously, since $ \gamma_r $ is a cone geodesic of $ \C^{\mu} $, we can find $ q_{\delta} \coloneqq \gamma_r(\tau+\delta) \in \Omega $ with $ \delta > 0 $ small enough so that $ r^{\perp_{\C^{\mu}}} = T_p(E^-_{\Omega}(q_{\delta}) \cap \overline{Q}_{\mu}) $ and hence, $ \Pi_r^{\mu} = T_p(E^-_{\Omega}(q_{\delta}) \cap \eta) $. Observe that $ \Pi_u^1 = \Pi_r^{\mu} $ by hypothesis, so we deduce that $ E^+_{\Omega}(q_{\varepsilon}) \cap \eta $ and $ E^-_{\Omega}(q_{\delta}) \cap \eta $ must be tangent to each other at $ p $. By reducing $ \varepsilon $ and $\Omega$ if necesssary, we can ensure that $ E^+_{\Omega}(q_{\varepsilon}) \cap \eta $ (and thus, $ J^+_{\Omega}(q_{\varepsilon}) \cap \eta $) is entirely contained in one of the two sides of $ \Omega \cap \eta $ divided by the affine hyperplane $p+\Pi_u^1$. Indeed, Lemma~\ref{lem:strong_convexity} ensures that $ E^+_{\Omega}(q_{\varepsilon}) \cap \eta $ has definite second fundamental form at $p$ and, by continuity, also in some neighborhood of $p$. By reducing  $\Omega$, we can assume that the intersection of $\Omega$ with $ E^+_{\Omega}(q_{\varepsilon}) \cap \eta $ is connected and with definite second fundamental form. Analogously, $ E^-_{\Omega}(q_{\delta}) \cap \eta $ lies also in one of the two mentioned sides and has definite second fundamental form. Specifically, by construction $ J^+_{\Omega}(q_{\varepsilon}) \cap \eta $ and $ J^-_{\Omega}(q_{\delta}) \cap \eta $ lie in opposite sides if and only if $ H_u^1 \cup H_r^{\mu} $ is straight oriented or, equivalently, they lie in the same side if and only if $ H_u^1 \cup H_r^{\mu} $ is reversely oriented. Therefore:
\begin{itemize}
\item When $ H_u^1 \cup H_r^{\mu} $ is straight oriented, then $ J^+_{\Omega}(q_{\varepsilon}) \cap \eta \cap J^-_{\Omega}(q_{\delta}) = \{p\} $, which implies that $ I^+_{\Omega}(q_{\varepsilon}) \cap \eta \cap I^-_{\Omega}(q_{\delta}) = \emptyset $. Hence, there is no timelike curve in $ \Omega $ going from $ q_{\varepsilon} $ to $ q_{\delta} $, so $ q_{\delta} \in \partial J^+_{\Omega}(q_{\varepsilon}) $ and we conclude that $ \gamma_u \cup \gamma_r $ is locally horismotic at $ p $, i.e. it is a Snell cone geodesic.
\item When $ H_u^1 \cup H_r^{\mu} $ is reversely oriented, then there are points in $ I^+_{\Omega}(q_{\varepsilon}) \cap \eta \cap I^-_{\Omega}(q_{\delta}) $ arbitrarily close to $ p $, i.e. there exist timelike curves from $ q_{\varepsilon} $ to $ q_{\delta} $ arbitrarily close to $ \gamma_u \cup \gamma_r|_{[\tau-\varepsilon,\tau+\delta]} $, which violates the local horismoticity at $ p $. Therefore, $ \gamma_u \cup \gamma_r $ is not a Snell cone geodesic.
\end{itemize}
\end{proof}

In conclusion, as already anticipated, a refracted trajectory is not always a Snell cone geodesic (as in Figure~\ref{fig:reversely}). On the other hand, the classical expectation is true for reflection: a non-smooth reflected trajectory cannot be a Snell cone geodesic, as we show next.

\begin{cor}
\label{cor:cone_reflected}
Let $ \gamma_u: [a,\tau] \rightarrow \overline{Q}_1 $ be an incident trajectory and $ \gamma_w: [\tau,b] \rightarrow \overline{Q}_1 $ an associated reflected one, such that $ \gamma_u \cup \gamma_w $ is transverse to $ \eta $ (case (A$^*$)(i) of Corollary~\ref{thm:existence_reflection}). Then, $ \gamma_u \cup \gamma_w $ is not a Snell cone geodesic.
\end{cor}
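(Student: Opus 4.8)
The plan is to invoke Theorem~\ref{thm:orientation}, which here (with $\mu=1$, since both pieces lie in $\overline{Q}_1$) reduces the statement to a purely linear-algebraic assertion at $p:=\gamma_u(\tau)\in\eta$: the curve $\gamma_u\cup\gamma_w$ is a Snell cone geodesic if and only if the broken hyperplane $H_u^1\cup H_w^1$ is straight oriented. By Corollary~\ref{thm:existence_reflection} a reflected direction exists only when $T_p\eta$ is $\C^1$-timelike, and then exactly two cases arise: $w=u$ (case (A$^*$)(ii), with $u\in T_p\eta$) or $w\neq u$ (case (A$^*$)(i), with $u\in Q^2_p$ and $w\in Q^1_p$, both non-tangent to $\eta$). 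The easy direction is $w=u$: then $u^{\perp_{\C^1}}=w^{\perp_{\C^1}}$, so $H_u^1$ and $H_w^1$ are the same oriented half-hyperplane and the ``broken'' hyperplane is a single hyperplane, which is straight oriented; equivalently $\dot\gamma(\tau^-)=\dot\gamma(\tau^+)$, so $\gamma_u\cup\gamma_w$ is an unbroken cone geodesic of $\C^1$ and hence locally horismotic. This gives the ``if'' part.

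The substance is the converse: a genuine reflection $w\neq u$ must make $H_u^1\cup H_w^1$ reversely oriented. I would argue this by passing to the $2$-dimensional quotient $V:=T_pQ/\Pi$, where $\Pi:=\Pi_u^1=\Pi_w^1$. This is legitimate because $\Pi$ is contained in $T_p\eta$, in $u^{\perp_{\C^1}}$ and in $w^{\perp_{\C^1}}$, so each of these hyperplanes, together with its chosen orientation, descends to $V$, and the straight/reverse distinction is preserved. Writing $\pi\colon T_pQ\to V$ for the projection, the three hyperplanes become three distinct lines through the origin, $\ell_\eta=\pi(T_p\eta)$, $\ell_u=\pi(u^{\perp_{\C^1}})$ and $\ell_w=\pi(w^{\perp_{\C^1}})$ (distinct since $u^{\perp_{\C^1}}\cap w^{\perp_{\C^1}}=\Pi$ and both are transverse to $T_p\eta$), with $\ell_\eta$ separating $\pi(Q^1_p)$ from $\pi(Q^2_p)$.

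The key picture to establish in $V$ is that $\bar A:=\pi(A_p)$ is a salient convex cone, i.e. a planar sector of angle $<\pi$, whose two bounding rays lie along $\ell_u$ and $\ell_w$, with $\pi(u)$ on the first and $\pi(w)$ on the second. Indeed, by Definition~\ref{def:cone_structure}(ii) and Remark~\ref{rem:orth}(3), $u^{\perp_{\C^1}}$ and $w^{\perp_{\C^1}}$ are supporting hyperplanes of the convex set $A_p$, tangent to $\C^1_p$ along $u$ and $w$ respectively; projecting, $\bar A$ lies in a closed half-plane of each of $\ell_u,\ell_w$, touching $\ell_u$ exactly along the ray through $\pi(u)$ and $\ell_w$ along the ray through $\pi(w)$, so $\bar A$ is the sector between these two rays. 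Since $u\in Q^2_p$ while $w\in Q^1_p$, the bounding ray through $\pi(u)$ lies in $\pi(Q^2_p)$ and the one through $\pi(w)$ in $\pi(Q^1_p)$, so $\bar A$ straddles $\ell_\eta$. Now the images $\pi(H_u^1),\pi(H_w^1)$ are the rays of $\ell_u,\ell_w$ on the closed $\pi(Q^1_p)$ side: because $\pi(u)\in\pi(Q^2_p)$, the ray $\pi(H_u^1)$ is the one \emph{opposite} to the bounding edge of $\bar A$ along $\ell_u$, whereas because $\pi(w)\in\pi(Q^1_p)$, the ray $\pi(H_w^1)$ \emph{is} the bounding edge of $\bar A$ along $\ell_w$. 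Both orientations point toward $\bar A$ (Definition~\ref{def:orientation}), and a short check shows they point into opposite components of $V$ minus $\pi(H_u^1)\cup\pi(H_w^1)$: along the edge $\pi(H_w^1)$ the orientation enters $\bar A$ and hence the component $S'$ containing the $\pi(u)$-direction, while along $\pi(H_u^1)$—the ray opposite the edge—the orientation toward the $\bar A$-side of the line $\ell_u$ enters the other component $S$ (the wedge between $\pi(H_u^1)$ and $\pi(H_w^1)$). Thus $H_u^1\cup H_w^1$ is reversely oriented, and by Theorem~\ref{thm:orientation} the trajectory is not a Snell cone geodesic.

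The main obstacle I anticipate is precisely this quotient step: verifying rigorously that $\bar A$ remains a salient sector after projecting along $\Pi$ (projections of salient cones need not stay salient in general, so one must use that $\ell_u\neq\ell_w$ are genuine supporting lines), and then transporting both orientations through the quotient and reading off their disagreement. The decisive structural fact—and the reason reflection behaves oppositely to refraction—is the incident/reflected asymmetry: the incident direction sits on the far ($Q_2$) side of $\eta$, so $\pi(H_u^1)$ is the ray opposite a bounding edge of $\bar A$, while the reflected direction sits on the near ($Q_1$) side, so $\pi(H_w^1)$ is itself a bounding edge; this mismatch is exactly what forces the reversal, and hence the failure of horismoticity, whenever the reflection is nontrivial.
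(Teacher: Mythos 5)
Your proposal is correct and follows essentially the same route as the paper: both reduce the statement to Theorem~\ref{thm:orientation} via the case analysis of Corollary~\ref{thm:existence_reflection}, handle case (A$^*$)(ii) trivially, and in case (A$^*$)(i) deduce reverse orientation from the fact that $u^{\perp_{\C^1}}$ and $w^{\perp_{\C^1}}$ are two distinct supporting hyperplanes of the same convex cone $\C^1_p$, each oriented toward it. Your passage to the quotient $T_pQ/\Pi^1_u$ is simply a rigorous rendering of the paper's one-line argument that the two hyperplanes sit ``in front'' of each other with the cone in the middle, and it correctly isolates the decisive asymmetry ($u\in Q^2_p$ versus $w\in Q^1_p$) that the paper leaves implicit.
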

\begin{proof}
Let $ p \coloneqq \gamma_u(\tau) = \gamma_w(\tau) $. By hypothesis, $ T_p\eta $ is $ \C^1 $-timelike and $ u, w \in \C_p^1 $, with $ u\not= w $. Therefore, any $ \C^1 $-timelike vector at $ p $ defines the orientation of both half-hyperplanes $ H^1_u, H^1_w $. Since $ H^1_u $ and $ H^1_w $ are contained in the same side of $ T_p\eta $, this means that $ H^1_u \cup H^1_w $ must be reversely oriented, so $ \gamma_u \cup \gamma_w $ is not a Snell cone geodesic by Theorem~\ref{thm:orientation}.
\end{proof}

\subsection{The double refraction phenomenon}
\label{subsec:double_refraction}
In contrast with the total reflection phenomenon described in \S~\ref{subsec:total_reflection} (where there is no refraction), case (C) of Theorem~\ref{thm:existence_refraction} is the only situation in which two distinct refracted directions transverse to $ \eta $ arise (see Figure~\ref{fig:case_c}). We refer to this as the {\em double refraction phenomenon}.\footnote{Case (B)(i) of Theorem~\ref{thm:existence_refraction} also exhibits two different refracted directions (see Figure~\ref{fig:case_bi}), although one of them is tangent to $ \eta $ and satisfies Snell's law as a strict inclusion, which, as explained in Remark~\ref{rem:directions}(5), is not expected to occur in physical applications.}

The fact that there are two possible refractions might seem unrealistic at first glance, as one may wonder what happens to a particle (e.g., a photon) that undergoes this situation. However, the following result shows that both refracted directions are not indistinguishable: only one of them generates a Snell cone geodesic. This solves the physical ambivalence, since free particles always tend to follow locally horismotic paths (unless forced to choose other trajectories, e.g. when reflected).

\begin{prop}
\label{prop:double_refraction}
Let $ \gamma_u: [a,\tau] \rightarrow \overline{Q}_1 $ be an incident trajectory and $ \gamma_v, \gamma_{\hat{v}}: [\tau,b] \rightarrow \overline{Q}_2 $ two associated refracted trajectories, such that $ \gamma_u \cup \gamma_v $ and $ \gamma_u \cup \gamma_{\hat{v}} $ are transverse to $ \eta $ (case (C) of Theorem~\ref{thm:existence_refraction}). Then, either $ \gamma_u \cup \gamma_v $ is a Snell cone geodesic and $ \gamma_u \cup \gamma_{\hat{v}} $ is not, or vice versa.
\end{prop}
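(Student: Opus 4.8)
The plan is to reduce everything to Theorem~\ref{thm:orientation} and then to exploit the fact that the two refracted directions live on a single cone $\C^2_p$, in complete parallel with the way the two reflected directions live on $\C^1_p$ in Corollary~\ref{cor:cone_reflected}. By Theorem~\ref{thm:orientation}, $\gamma_u\cup\gamma_v$ (resp. $\gamma_u\cup\gamma_{\hat v}$) is a Snell cone geodesic if and only if the broken hyperplane $H^1_u\cup H^2_v$ (resp. $H^1_u\cup H^2_{\hat v}$) is straight oriented. Since the incident piece $H^1_u$, \emph{together with its orientation}, is the same in both broken hyperplanes, the statement collapses to a comparison between the two refracted pieces $H^2_v$ and $H^2_{\hat v}$: I must show that exactly one of them glues to the common $H^1_u$ in a straight way.

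The geometric heart of the matter is the relative position of $v^{\perp_{\C^2}}$ and $\hat v^{\perp_{\C^2}}$. By Lemma~\ref{lem:Pi}(i), as used in case (C) of Theorem~\ref{thm:existence_refraction}, the directions $v$ and $\hat v$ are precisely the two $\C^2$-lightlike directions orthogonal to $\Pi^1_u$, so $v^{\perp_{\C^2}}$ and $\hat v^{\perp_{\C^2}}$ are the two supporting hyperplanes of the convex cone $\C^2_p$ that contain $\Pi^1_u$. As in Corollary~\ref{cor:cone_reflected}, these two hyperplanes face each other with $\C^2_p$ squeezed in between, so the orientations they induce---both chosen towards $\C^2_p$---are opposite to one another. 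To make this precise I would pass to the two-dimensional quotient $W\coloneqq T_pQ/\Pi^1_u$, in which $T_p\eta$ projects to a line $\ell_\eta$, the open regions $Q^1_p,Q^2_p$ to the two half-planes, and $\overline{A}^2_p$ to a convex body $C\subset W$ which---by the $\C^2$-spacelike character of $T_p\eta$---lies in the open $Q^2_p$-half-plane with the origin outside it. Under this projection the hyperplanes $v^{\perp_{\C^2}},\hat v^{\perp_{\C^2}}$ become the two tangent lines from the origin to $C$, the half-hyperplanes $H^2_v,H^2_{\hat v}$ become their rays in the $Q^2_p$-half-plane, and the induced orientations become the normals pointing into the angular sector that contains $C$.

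With this picture, straightness of $H^1_u\cup H^2_r$ translates into the condition that the fixed oriented image of $H^1_u$ and the oriented ray image of $H^2_r$ lie on the same side of the bent line they form in $W$ (this is exactly the reduction underlying the proof of Theorem~\ref{thm:orientation}). Because the two tangents touch $C$ from opposite angular sides, the induced orientations of $H^2_v$ and $H^2_{\hat v}$ point to opposite sides relative to the common ray $\pi(H^1_u)$, and a direct inspection of the cyclic order of these four rays on the circle of directions in $W$ shows that $\pi(H^1_u)$ is straight with exactly one of them and reverse with the other. By Theorem~\ref{thm:orientation} this gives that precisely one of $\gamma_u\cup\gamma_v$, $\gamma_u\cup\gamma_{\hat v}$ is a Snell cone geodesic. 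The step I expect to be most delicate is this final one: turning the slogan \emph{``the two orientations are opposite, hence exactly one gluing is straight''} into a rigorous statement, that is, controlling the cyclic arrangement of the incident ray, the two tangent rays and their induced normals in $W$ for \emph{every} admissible position of the incident cone $\C^1_p$ (in particular when $T_p\eta$ is $\C^1$-timelike). This is precisely where the strong convexity of $\C^2_p$ is indispensable---it forces the two tangency points, and hence the two induced orientations about $C$, to be genuinely distinct and oppositely placed---and it is the only place where one must argue beyond the formal analogy with the reflected case treated in Corollary~\ref{cor:cone_reflected}.
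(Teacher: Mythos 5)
Your proposal is correct and follows essentially the same route as the paper: reduce to Theorem~\ref{thm:orientation}, observe that $v^{\perp_{\C^2}}$ and $\hat v^{\perp_{\C^2}}$ are the two supporting hyperplanes of $\C^2_p$ through $\Pi^1_u$ with the cone sandwiched between them, so the induced orientations on $H^2_v$ and $H^2_{\hat v}$ are opposite and exactly one gluing with $H^1_u$ is straight. The only difference is that you make explicit (via the quotient $T_pQ/\Pi^1_u$) the final step that the paper asserts in one sentence, which is a legitimate and welcome refinement rather than a change of method.
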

\begin{proof}
Consider the three half-hyperplanes $ H_u^1 $, $ H_v^2 $ and $ H_{\hat{v}}^2 $, glued along $ \Pi_u^1 = \Pi_v^2 = \Pi_{\hat{v}}^2 $. Since $ \C_p^2 $ is located between $ H_v^2 $ and $ H_{\hat{v}}^2 $ (it is tangent to both half-hyperplanes), the orientation induced on $ H_v^2 $ is opposite to the one induced on $ H_{\hat{v}}^2 $, i.e. if $ H_u^1 \cup H_v^2 $ is straight oriented, then $ H_u^1 \cup H_{\hat{v}}^2 $ must be reversely oriented, and vice versa. Applying Theorem~\ref{thm:orientation}, we conclude that either $ \gamma_u \cup \gamma_v $ is a Snell cone geodesic and $ \gamma_u \cup \gamma_{\hat{v}} $ is not, or vice versa.
\end{proof}

Observe that the double refraction occurs when there is a discontinuity of the medium in time (at least for $ Q_2 $) and thus, it goes beyond the classical intuition and interpretation of Snell's law and the refraction phenomenon. Nevertheless, it may be of independent interest for various applications, such as the discretization of spacetimes (see \S~\ref{sec:discretization} below), where such discontinuities naturally arise when discretizing in time and only refracted trajectories that are Snell cone geodesics must be taken into account to determine the horismos.

\section{Physical interpretations and applications}
\label{sec:applications}
In this last section we analyze some interesting physical situations and applications.

\subsection{A natural physical case}
\label{sec:natural}
Although all the results we have presented so far depend solely on the cone structures, it is usually preferable to choose specific Lorentz-Finsler metrics when modeling particular physical phenomena, as this facilitates explicit computations. With this in mind, the following choices arise naturally when describing the propagation of anisotropic waves \cite{JPS21,P24}---such as sound waves \cite{gibbons2010,gibbons2011}, wildfire fronts \cite{JPS23,markvorsen2016}, and especially seismic waves \cite{antonelli2003,yajima2009}, where refraction between different layers of the Earth plays a key role (see \cite{BS02,SW99})---or the travel of moving objects in a current, as in the so-called Zermelo's navigation problem (see e.g. \cite{bao2004,CJS14,JS20,MPR25,P24b}):
\begin{itemize}
\item Underlying manifold: $ Q = \mathds{R} \times N $, with $ N $ representing the {\em space} and the natural projection $ t: Q \rightarrow \mathds{R} $ representing the {\em time} (in a non-relativistic context).
\item Interface: $ \eta = \mathds{R} \times \tilde{\eta} $, so that $ \tilde{\eta} $ is a hypersurface of $ N $ and $ Q_1 = \mathds{R} \times N_1 $, $ Q_2 = \mathds{R} \times N_2 $, where $ N_1, N_2 $ are open subsets of $ N $ whose closures $ \overline{N}_1, \overline{N}_2 $ are smooth manifolds with the same boundary $ \tilde{\eta} $.
\item Metrics: let $ F_1^t, F_2^t $ be two time-dependent Finsler metrics defined on $ N_1, N_2 $, respectively, whose indicatrices (unit vectors) represent the velocity vectors of a wave or moving object---as measured by the observers' reference frame $ \partial_t $---varying at each point, time and direction. We assume that $ F_1^t, F_2^t $ can be smoothly extended to $ \tilde{\eta} $. Consider then the associated Lorentz-Finsler metrics $ L_1 = dt^2-(F_1^t)^2, L_2 = dt^2-(F_2^t)^2 $ defined on $ Q_1, Q_2 $, respectively, with lightcones $ \C^{\mu} = L_{\mu}^{-1}(0) \cap dt^{-1}((0,\infty)) $ (recall Remark~\ref{rem:comments}(5)). Observe that $ \eta $ and $ \partial_t $ are $ L_1 $- and $ L_2 $-timelike, and $ t $ is a temporal function---so both Finsler spacetimes are stably causal.
\end{itemize}

In this setting, we are interested in the arrival time of lightlike curves $ \gamma $ from an $ L_1 $-spacelike submanifold $ P \subset \{a\} \times N_1 $ to the vertical line $ \alpha: t \mapsto (t,x) $ (an observer in $ \partial_t $), where $ x \in N_2 $ is a fixed point in the space and $ \gamma $ meets $ \eta $ once. Assuming $ \gamma(t) = (t,\sigma(t)) $ is $ t $-parametrized, with $ \gamma(a) = p $, $ \gamma(\tau) \in \eta $ and $ \gamma(b) \in \textup{Im}(\alpha) $, note that its arrival time is $ \T[\gamma] = \alpha^{-1}(\gamma(b)) = b $ (this is the time measured by $ \alpha $). Moreover, $ \gamma $ satisfies
\begin{equation*}
\begin{split}
L_1(\dot{\gamma}(t)) = 0 & \Leftrightarrow F_1^t(\dot{\sigma}(t)) = 1, \quad \forall t \in [a,\tau], \\
L_2(\dot{\gamma}(t)) = 0 & \Leftrightarrow F_2^t(\dot{\sigma}(t)) = 1, \quad \forall t \in [\tau,b],
\end{split}
\end{equation*}
so $ t $-parametrized lightlike curves $ \gamma $ in the spacetime project onto spatial trajectories $ \sigma $ of the wave or object ($ \dot{\sigma}(t) $ coincides with the spatial velocity, as it is $ F_{\mu}^t $-unitary). This way, we effectively model the travel of such a wave or moving object between two different media, propagating with a time-dependent anisotropic velocity.\footnote{This can be regarded as a generalization of the time-dependent Zermelo's navigation problem (see \cite{P24}) when there is a discontinuity in the medium of propagation.}

In this particular setting, Theorems~\ref{thm:snell} and \ref{thm:orientation} take the following form, thus generalizing the anisotropic Snell's law \cite[Theorem~3.4]{MP23} to the time-dependent (rheonomic) case.

\begin{cor}[Refraction for the anisotropic rheonomic extension of the classical setting]
\label{cor:snell_physical}
In the setting presented above, let $ \gamma \in \mathcal{N}_{P,\alpha} $ be $ t $-parametrized. Then, $ \gamma(t) = (t,\sigma(t)) $ is a critical point of $ \T $ (recall Definition~\ref{def:arrival_time}) if and only if $ \dot{\sigma}(a) \perp_{F_1^a} P $, $ \gamma|_{[a,\tau]} $ is a ($ t $-parametrized) pregeodesic of $ L_1 $, $ \gamma|_{[\tau,b]} $ is a ($ t $-parametrized) pregeodesic of $ L_2 $ and
\begin{equation}
\label{eq:snell_natural}
g^{F_1^\tau}_{\dot{\sigma}(\tau^-)}(\dot{\sigma}(\tau^-),\tilde{z}) = g^{F_2^\tau}_{\dot{\sigma}(\tau^+)}(\dot{\sigma}(\tau^+),\tilde{z}), \quad \forall \tilde{z} \in T_{\sigma(\tau)}\tilde{\eta}.
\end{equation}
Moreover, if this holds and $\gamma$ is transverse to $\eta$, then $\gamma$ is a Snell cone geodesic.

In particular, when $ F_1^t = F_1, F_2^t = F_2 $ are time-independent, then $ \gamma(t) = (t,\sigma(t)) $ is a critical point of $ \T $ if and only if $ \dot{\sigma}(a) \perp_{F_1} P $, $ \sigma|_{[a,\tau]} $ is a unit-speed geodesic of $ F_1 $, $ \sigma|_{[\tau,b]} $ is a unit-speed geodesic of $ F_2 $ and \eqref{eq:snell_natural} holds.
\end{cor}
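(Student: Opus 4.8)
The plan is to obtain Corollary~\ref{cor:snell_physical} as a direct specialization of Theorem~\ref{thm:snell} to the product setting $Q=\mathds{R}\times N$ with $L_\mu=dt^2-(F_\mu^t)^2$, the real work being the translation of the abstract $\C^\mu$-orthogonality conditions into the Finslerian data on $N$. First I would check that the two standing hypotheses of Theorem~\ref{thm:snell} hold automatically here. Since $\alpha$ is the vertical line $t\mapsto(t,x)$, one has $\dot\alpha=\partial_t$ and $L_2(\partial_t)=1>0$, so $\alpha$ is $\C^2$-timelike and the non-orthogonality requirement $\dot\alpha(\T[\gamma])\notin\dot\gamma(b)^{\perp_{\C^2}}$ is satisfied (Remark~\ref{rem:restrictions}(1)). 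Likewise $\eta=\mathds{R}\times\tilde\eta$ contains $\partial_t$, hence $T_{\gamma(\tau)}\eta$ is both $\C^1$- and $\C^2$-timelike, so the transversality of $\dot\gamma(\tau^\pm)^{\perp_{\C^\mu}}$ to $T_{\gamma(\tau)}\eta$ is automatic (Remark~\ref{rem:orth}(4), Remark~\ref{rem:directions}(1)). Theorem~\ref{thm:snell} then applies and produces the four critical-point conditions, which I would rewrite one at a time.

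The computational heart is the fundamental tensor of $L_\mu$ along lightlike directions. Writing a tangent vector as $v=(v^0,\tilde v)$ and using \eqref{eq:fund_tensor_1} together with \eqref{eq:fund_tensor_2}, a short computation gives, for every $w=(w^0,\tilde w)$,
\begin{equation*}
g^{L_\mu}_v(v,w)=v^0w^0-g^{F_\mu^t}_{\tilde v}(\tilde v,\tilde w).
\end{equation*}
(The failure of $L_\mu$ to be smooth on $\textup{Span}(\partial_t)$ noted in Remark~\ref{rem:comments}(5) is irrelevant, as $v$ is lightlike.) For a $t$-parametrized lightlike $\gamma(t)=(t,\sigma(t))$ one has $v^0=1$ and $F_\mu^t(\dot\sigma)=1$. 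Applying this at $t=a$ with $w\in T_{\gamma(a)}P\subset\textup{Ker}\,dt$ (so $w^0=0$) shows that $\dot\gamma(a)\perp_{L_1}P$ is equivalent to $\dot\sigma(a)\perp_{F_1^a}P$, and the cone-geodesic conditions transfer to $t$-parametrized $L_\mu$-pregeodesics through Theorem~\ref{thm:cone_geodesics}.

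The key step is the translation of Snell's law \eqref{eq:snell}. In its equivalent form \eqref{eq:snell_L} it asserts that the functionals $\phi_1(z)\coloneqq g^{L_1}_{\dot\gamma(\tau^-)}(\dot\gamma(\tau^-),z)$ and $\phi_2(z)\coloneqq g^{L_2}_{\dot\gamma(\tau^+)}(\dot\gamma(\tau^+),z)$ on $T_{\gamma(\tau)}\eta$ have the same kernel; by the transversality just verified both are nonzero, hence proportional, $\phi_2=\lambda\phi_1$. The decisive observation is that $\partial_t\in T_{\gamma(\tau)}\eta$ and, by the tensor formula above, $\phi_1(\partial_t)=\phi_2(\partial_t)=1$; therefore $\lambda=1$ and $\phi_1=\phi_2$ identically on $T_{\gamma(\tau)}\eta$. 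Evaluating this equality at $z=(0,\tilde z)$ with $\tilde z\in T_{\sigma(\tau)}\tilde\eta$ cancels the common $z^0$-term and yields exactly \eqref{eq:snell_natural}. I expect this normalization argument---upgrading the equality of kernels granted by Theorem~\ref{thm:snell} to an honest equality of the bilinear expressions---to be the main (if short) conceptual point, since it is precisely the product structure, via $\partial_t\in T\eta$, that pins the proportionality constant to $1$.

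Finally, for the time-independent statement I would invoke the standard correspondence for static Fermat metrics: when $F_\mu^t=F_\mu$, a $t$-parametrized lightlike pregeodesic of $dt^2-F_\mu^2$ projects to a Finsler pregeodesic of $F_\mu$, and since $F_\mu(\dot\sigma)\equiv1$ this projection is in fact a unit-speed geodesic; the orthogonality and Snell conditions are unchanged, now read as equalities of the (constant) tensors at $\tau$. The only care needed here is to confirm that the spatial projection of the $L_\mu$-geodesic equations reduces to the $F_\mu$-geodesic equations under the constant-speed constraint, which is routine given the block form of $L_\mu$.
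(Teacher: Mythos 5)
Your proposal is correct and follows essentially the same route as the paper: both reduce to Theorem~\ref{thm:snell} (in its $L_\mu$-formulation), exploit the block form $g^{L_\mu}_v(v,w)=v^0w^0-g^{F_\mu^t}_{\tilde v}(\tilde v,\tilde w)$, and use the product structure $\eta=\mathds{R}\times\tilde\eta$ to upgrade the equality of orthogonal hyperplanes in \eqref{eq:snell_L} to the equality of spatial fundamental tensors \eqref{eq:snell_natural}. The only cosmetic difference is that you fix the proportionality constant between the two functionals by evaluating at $\partial_t\in T_{\gamma(\tau)}\eta$, whereas the paper lets the $z^0$-component of $z$ range freely---these are the same observation---and your appeal to the static-Fermat correspondence for the time-independent case corresponds to the paper's explicit reduction of the formal Christoffel symbols.
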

\begin{proof}
Throughout this proof, indices $ i,j,k,r $ run from $ 0 $ to $ n $, while $ \lambda,\nu,\xi $ run from $ 1 $ to $ n $. Let $ \varphi = (t=x^0,\ldots,x^n) $ be a coordinate system on $ Q $, so $ \tilde{\varphi} = (x^1,\ldots,x^n) $ is a coordinate system on $ N $. Note first that the timelike character of $ \eta $ and $ \alpha $ ensures that the restrictions in Remark~\ref{rem:restrictions} are always satisfied. In particular, the converse of Theorem~\ref{thm:snell}(I) holds and Snell's law \eqref{eq:snell_L} is equivalent to its version with equality \eqref{eq:snell_equality}. Also, consider the equivalent formulation of Theorem~\ref{thm:snell}(I) in terms of $ L_1, L_2 $, given at the beginning of its proof.

For the time-dependent case, observe that the relationship between the fundamental tensors of $ L_{\mu} = dt^2-(F_{\mu}^t)^2 $ and $ F_{\mu}^t $ is
\begin{equation*}
g^{L_{\mu}}_{v}(u,w) = u^0 w^0 - g^{F_{\mu}^t}_{\tilde{v}}(\tilde{u},\tilde{w}), \quad \mu=1,2,
\end{equation*}
for any $ v = (v^0,\tilde{v}) \in \C^{\mu}_{(t,x)} $ and any vectors $ u = (u^0,\tilde{u}), w = (w^0,\tilde{w}) \in T_{(t,x)}Q_{\mu} \equiv \mathds{R} \times T_xN_{\mu} $, with $ (t,x) \in Q_{\mu} $. In matrix form:
\begin{equation*}
(g^{L_{\mu}}_{v})_{ij} = \left(
\begin{array}{cccc}
1 & 0 & \cdots & 0 \\
0 & \ & \ & \ \\
\vdots & \ & -(g^{F_{\mu}^t}_{\tilde{v}})_{\lambda\nu} & \ \\
0 & \ & \ & \
\end{array}\right),
\quad
(g^{L_{\mu}}_{v})^{ij} = \left(
\begin{array}{cccc}
1 & 0 & \cdots & 0 \\
0 & \ & \ & \ \\
\vdots & \ & -(g^{F_{\mu}^t}_{\tilde{v}})^{\lambda\nu} & \ \\
0 & \ & \ & \
\end{array}\right).
\end{equation*}
Since $ P \subset \{a\} \times N_1 $, every $ z \in T_{\gamma(a)}P $ is of the form $ z = (0,\tilde{z}) $, with $ \tilde{z} \in T_{\sigma(a)}P $, so
\begin{equation*}
\begin{split}
\dot{\gamma}(a) \perp_{L_1} P & \Leftrightarrow g^{L_1}_{\dot{\gamma}}(\dot{\gamma},z) = 0, \quad \forall z \in T_{\gamma(a)}P \\
& \Leftrightarrow g^{F_1^a}_{\dot{\sigma}}(\dot{\sigma},\tilde{z}) = 0, \quad \forall \tilde{z} \in T_{\sigma(a)}P \Leftrightarrow \dot{\sigma}(a) \perp_{F_1^a} P.
\end{split}
\end{equation*}
It only remains to show that \eqref{eq:snell_equality} reduces to \eqref{eq:snell_natural}. Since $ \eta = \mathds{R} \times \tilde{\eta} $, $ z = (z^0,\tilde{z}) \in T_{\gamma(\tau)}\eta $ if and only if $ \tilde{z} \in T_{\sigma(\tau)}\tilde{\eta} $. Then, \eqref{eq:snell_equality} becomes
\begin{equation*}
g^{F_1^\tau}_{\dot{\sigma}(\tau^-)}(\dot{\sigma}(\tau^-),\tilde{z}) = z^0 \Leftrightarrow g^{F_2^\tau}_{\dot{\sigma}(\tau^+)}(\dot{\sigma}(\tau^+),\tilde{z}) = z^0, \quad \forall \tilde{z} \in T_{\sigma(\tau)}\tilde{\eta},
\end{equation*}
and notice that $ z^0 \in \mathds{R} $ is arbitrary, so this condition is equivalent to \eqref{eq:snell_natural}.

If $ \gamma $ is a critical point of $ \mathcal{T} $ transverse to $ \eta $, then the orientation on the half-hyperplanes $ H^1_{\dot{\gamma}^-}, H^2_{\dot{\gamma}^+} $ is given by $ \partial_t|_p $, as this vector is both $ \C^1 $- and $ \C^2 $-timelike. Since $ H^1_{\dot{\gamma}^-} $ and $ H^2_{\dot{\gamma}^+} $ are contained in opposite sides of $ T_p\eta $, this means that $ H^1_{\dot{\gamma}^-} \cup H^2_{\dot{\gamma}^+} $ is straight oriented, so $ \gamma $ is a Snell cone geodesic by Theorem~\ref{thm:orientation}.

Finally, the claim for the time-independent case is straightforward from the fact that $ t $-parametrized lightlike pregeodesics of $ L_{\mu} $ project onto unit-speed geodesics of $ F_{\mu} $ (see e.g. \cite[Proposition~B.1]{CS18} and \cite[Proposition~4.1]{CJS11}). Indeed, the ordinary diffential equation system for a $ t $-parametrized curve $ \gamma(t) = (t,\sigma(t)) $ to be a pregeodesic of $ L_{\mu} $ is (see \cite[\S~4.2]{JPS21})
\begin{equation}
\label{eq:L_pregeodesics}
\ddot{\sigma}^{\xi} = -\gamma^{\xi}_{\ ij}(\dot{\gamma}) \dot{\sigma}^i \dot{\sigma}^j + \gamma^0_{\ ij}(\dot{\gamma}) \dot{\sigma}^i \dot{\sigma}^j \dot{\sigma}^{\xi}, \quad \xi = 1,\ldots,n,
\end{equation}
where $ \gamma^k_{\ ij} $ are the formal Christoffel symbols of $ L_{\mu} $, given by \eqref{eq:christoffel}. In the time-independent case, $ \gamma_{\ ij}^0(v) = 0 $ and $ \gamma_{\ \lambda\nu}^{\xi}(v) = \tilde{\gamma}_{\ \lambda\nu}^{\xi}(\tilde{v}) $ for any $ v = (v^0,\tilde{v}) \in \C^{\mu}_{(t,x)} $, being $ \tilde{\gamma}_{\ \lambda\nu}^{\xi} $ the formal Christoffel symbols of $ F_{\mu} $ (defined analogously). Therefore, \eqref{eq:L_pregeodesics} reduces to
\begin{equation*}
\ddot{\sigma}^{\xi} = -\tilde{\gamma}_{\ \lambda\nu}^{\xi}(\dot{\sigma}) \dot{\sigma}^{\lambda} \dot{\sigma}^{\nu}, \quad \xi = 1,\ldots,n,
\end{equation*}
which are the geodesic equations of $ F_{\mu} $. Moreover, since $ \gamma $ is lightlike,
\begin{equation*}
L_{\mu}(\dot{\gamma}(t)) = 0 \Leftrightarrow F_{\mu}(\dot{\sigma}(t)) = 1,
\end{equation*}
so $ \gamma $ is a $ t $-parametrized lightlike pregeodesic of $ L_{\mu} $ if and only if $ \sigma $ is a unit-speed geodesic of $ F_{\mu} $.
\end{proof}

Analogously, the particularization of Corollaries~\ref{thm:reflection} and \ref{cor:cone_reflected} to this setting generalizes the anisotropic law of reflection \cite[Theorem~4.1]{MP23} to the time-dependent case.

\begin{cor}[Reflection for the anisotropic rheonomic extension of the classical setting]
\label{cor:reflection_physical}
In the setting presented above, let $ \gamma \in \mathcal{N}^*_{P,\alpha} $ be $ t $-parametrized. Then, $ \gamma(t) = (t,\sigma(t)) $ is a critical point of $ \T^* $ (recall Definition~\ref{def:arrival_time_reflection}) if and only if $ \dot{\sigma}(a) \perp_{F_1^a} P $, $ \gamma|_{[a,\tau]} $ and $ \gamma|_{[\tau,b]} $ are ($ t $-parametrized) pregeodesics of $ L_1 $, and
\begin{equation}
\label{eq:reflection_natural}
g^{F_1^\tau}_{\dot{\sigma}(\tau^-)}(\dot{\sigma}(\tau^-),\tilde{z}) = g^{F_1^\tau}_{\dot{\sigma}(\tau^+)}(\dot{\sigma}(\tau^+),\tilde{z}), \quad \forall \tilde{z} \in T_{\sigma(\tau)}\tilde{\eta}.
\end{equation}
Moreover, if this holds and $\gamma$ is transverse to $\eta$, then $\gamma$ is not a Snell cone geodesic.

In particular, when $ F_1^t = F_1, F_2^t = F_2 $ are time-independent, then $ \gamma(t) = (t,\sigma(t)) $ is a critical point of $ \T $ if and only if $ \dot{\sigma}(a) \perp_{F_1} P $, $ \sigma|_{[a,\tau]} $ and $ \sigma|_{[\tau,b]} $ are unit-speed geodesics of $ F_1 $, and \eqref{eq:reflection_natural} holds.
\end{cor}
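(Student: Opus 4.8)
The plan is to mirror the proof of Corollary~\ref{cor:snell_physical} almost verbatim, the only structural change being that the reflected branch $\gamma|_{[\tau,b]}$ now stays inside $Q_1$, so the second medium's metric $L_2$ (resp.\ $F_2^t$) is replaced everywhere by $L_1$ (resp.\ $F_1^t$). Concretely, I would start from the Lorentz--Finsler reformulation of Corollary~\ref{thm:reflection}: a $t$-parametrized $\gamma$ is a critical point of $\T^*$ if and only if $\dot{\gamma}(a)\perp_{L_1}P$, both $\gamma|_{[a,\tau]}$ and $\gamma|_{[\tau,b]}$ are lightlike pregeodesics of $L_1$, and the reflection identity $\dot{\gamma}(\tau^-)^{\perp_{L_1}}\cap T_{\gamma(\tau)}\eta=\dot{\gamma}(\tau^+)^{\perp_{L_1}}\cap T_{\gamma(\tau)}\eta$ holds. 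Since $\eta$ and $\alpha$ are both $L_1$-timelike in this setting, the non-orthogonality and transversality hypotheses of Corollary~\ref{thm:reflection} are automatic and need not be assumed, exactly as in the refraction case.

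Next I would record the block-diagonal relation $g^{L_1}_v(u,w)=u^0w^0-g^{F_1^t}_{\tilde v}(\tilde u,\tilde w)$ valid for $v\in\C^1_{(t,x)}$, and run the two reductions. Because $P\subset\{a\}\times N_1$, every $z\in T_{\gamma(a)}P$ has the form $(0,\tilde z)$, so $\dot{\gamma}(a)\perp_{L_1}P$ collapses to $g^{F_1^a}_{\dot\sigma}(\dot\sigma,\tilde z)=0$ for all $\tilde z\in T_{\sigma(a)}P$, i.e.\ $\dot\sigma(a)\perp_{F_1^a}P$. For the interface identity I would use $\eta=\mathds{R}\times\tilde\eta$, so that $z=(z^0,\tilde z)\in T_{\gamma(\tau)}\eta$ iff $\tilde z\in T_{\sigma(\tau)}\tilde\eta$; the reflection identity then reads $g^{F_1^\tau}_{\dot\sigma(\tau^-)}(\dot\sigma(\tau^-),\tilde z)=z^0\Leftrightarrow g^{F_1^\tau}_{\dot\sigma(\tau^+)}(\dot\sigma(\tau^+),\tilde z)=z^0$, and since $z^0$ ranges freely over $\mathds{R}$ this is precisely \eqref{eq:reflection_natural}.

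For the time-independent case I would invoke the same projection argument used at the end of the proof of Corollary~\ref{cor:snell_physical}: with vanishing time Christoffel symbols, the pregeodesic system \eqref{eq:L_pregeodesics} reduces to the $F_1$-geodesic equations for $\sigma$, and $L_1(\dot\gamma)=0$ is equivalent to $F_1(\dot\sigma)=1$; hence the $L_1$-pregeodesic condition on each of $\gamma|_{[a,\tau]}$ and $\gamma|_{[\tau,b]}$ translates into $\sigma$ being a unit-speed $F_1$-geodesic on each piece.

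I do not expect a genuine obstacle: the reflection case is in fact slightly simpler than refraction, since the \emph{same} fundamental tensor $g^{F_1^\tau}$ appears on both sides of \eqref{eq:reflection_natural}, so no mirroring of metrics enters the computation. The only point requiring mild care is that the incident and reflected directions both live in the common cone $\C^1_p$. As an alternative to the direct computation, one could bypass it entirely by applying Corollary~\ref{cor:snell_physical} on the double manifold $\overline{Q}_1\cup_{\mathrm{Id}}\overline{Q}_1$ of Remark~\ref{rem:double}, where reflection becomes refraction between two mirrored copies of $(N_1,F_1^t)$; I would nonetheless favor the explicit reduction above since it keeps the coordinate identities transparent.
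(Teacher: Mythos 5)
Your proposal is correct and matches the paper's approach: the paper gives no separate proof for Corollary~\ref{cor:reflection_physical}, presenting it as the direct particularization of Corollary~\ref{thm:reflection} obtained by repeating the computations in the proof of Corollary~\ref{cor:snell_physical} with $L_2$, $F_2^t$ replaced by $L_1$, $F_1^t$, which is exactly what you carry out. Your observations that the non-orthogonality and transversality hypotheses hold automatically and that the double-manifold reduction of Remark~\ref{rem:double} gives an alternative route are both consistent with the paper.
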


\begin{rem}[Coordinates]
In coordinates, taking into account that $ \sigma $ is $ F_{\mu} $-unitary and $ g^{F_{\mu}}_{\tilde{v}}(\tilde{v},\tilde{z}) = F_{\mu}(\tilde{v}) \frac{\partial F_{\mu}}{\partial y^{\xi}}(\tilde{v})\tilde{z}^{\xi} $, with $ \xi = 1,\ldots,n $ (from \eqref{eq:fund_tensor_2}), Snell's law \eqref{eq:snell_natural} is equivalent to (see also \cite[Equation~(9)]{MP23})
\begin{equation*}
\left( \frac{\partial F_1^{\tau}}{\partial y^{\xi}}(\dot{\sigma}(\tau^-)) - \frac{\partial F_2^{\tau}}{\partial y^{\xi}}(\dot{\sigma}(\tau^+)) \right) \tilde{z}^{\xi} = 0, \quad \forall \tilde{z} \equiv (\tilde{z}^1,\ldots,\tilde{z}^n) \in T_{\sigma(\tau)}\tilde{\eta},
\end{equation*}
and the law of reflection \eqref{eq:reflection_natural} is equivalent to (see also \cite[Equation~(11)]{MP23})
\begin{equation*}
\left( \frac{\partial F_1^{\tau}}{\partial y^{\xi}}(\dot{\sigma}(\tau^-)) - \frac{\partial F_1^{\tau}}{\partial y^{\xi}}(\dot{\sigma}(\tau^+)) \right) \tilde{z}^{\xi} = 0, \quad \forall \tilde{z} \equiv (\tilde{z}^1,\ldots,\tilde{z}^n) \in T_{\sigma(\tau)}\tilde{\eta}.
\end{equation*}
\end{rem}

The following remark offers yet another interpretation of these laws, this time in terms of Finslerian angles. This perspective is more akin to the classical interpretation and, in fact, the subsequent example explicitly shows how these results reduce to the classical Snell's law and the law of reflection in the isotropic case (see also \cite[Examples~3.7 and 4.3]{MP23}).

\begin{rem}[Finslerian angles]
Fixing $ (t,x) \in Q $ and $ \tilde{v} \in T_xN $, we define the {\em $ F_{\mu}^t $-angle} between two vectors $ \tilde{u}, \tilde{w} \in T_xN $ in the direction $ \tilde{v} $ as the unique value $ \theta_{\tilde{v}}^{F_{\mu}^t}(\tilde{u},\tilde{w}) \in [0,\pi] $ satisfying
\begin{equation}
\label{eq:finsler_angle}
\cos{\left(\theta_{\tilde{v}}^{F_{\mu}^t}(\tilde{u},\tilde{w})\right)} = \frac{g^{F_{\mu}^t}_{\tilde{v}}(\tilde{u},\tilde{w})}{F_{\mu}^t(\tilde{u}) F_{\mu}^t(\tilde{w})}.
\end{equation}
In terms of these Finslerian angles, taking into account that $ \sigma $ is $ F_{\mu} $-unitary, Snell's law \eqref{eq:snell_natural} is equivalent to
\begin{equation}
\label{eq:snell_angles}
F_1^{\tau}(\tilde{z}) \cos{\left(\theta_{\dot{\sigma}(\tau^-)}^{F_1^\tau}(\dot{\sigma}(\tau^-),\tilde{z})\right)} = F_2^{\tau}(\tilde{z}) \cos{\left(\theta_{\dot{\sigma}(\tau^+)}^{F_2^\tau}(\dot{\sigma}(\tau^+),\tilde{z})\right)}, \quad \forall \tilde{z} \in T_{\sigma(\tau)}\tilde{\eta},
\end{equation}
and the law of reflection \eqref{eq:reflection_natural} is equivalent to
\begin{equation}
\label{eq:reflection_angles}
\cos{\left(\theta_{\dot{\sigma}(\tau^-)}^{F_1^\tau}(\dot{\sigma}(\tau^-),\tilde{z})\right)} = \cos{\left(\theta_{\dot{\sigma}(\tau^+)}^{F_1^\tau}(\dot{\sigma}(\tau^+),\tilde{z})\right)}, \quad \forall \tilde{z} \in T_{\sigma(\tau)}\tilde{\eta}.
\end{equation}

In particular, from \eqref{eq:finsler_angle} it is straightforward to check that Finslerian angles are conformally invariant. Therefore, if $ F_1^t, F_2^t $ are conformal at $ \gamma(\tau) = (\tau,\sigma(\tau)) \in \eta $, i.e. there exists $ \lambda > 0 $ such that $ F_2^\tau(\tilde{v}) = \lambda F_1^\tau(\tilde{v}) $ for all $ \tilde{v} \in T_{\sigma(\tau)}N $, then Snell's law \eqref{eq:snell_angles} reduces to
\begin{equation}
\label{eq:snell_conf}
\cos{\left(\theta_{\dot{\sigma}(\tau^-)}^{F}(\dot{\sigma}(\tau^-),\tilde{z})\right)} = \lambda \cos{\left(\theta_{\dot{\sigma}(\tau^+)}^{F}(\dot{\sigma}(\tau^+),\tilde{z})\right)}, \quad \forall \tilde{z} \in T_{\sigma(\tau)}\tilde{\eta},
\end{equation}
and the law of reflection \eqref{eq:reflection_angles} to
\begin{equation}
\label{eq:reflection_conf}
\cos{\left(\theta_{\dot{\sigma}(\tau^-)}^{F}(\dot{\sigma}(\tau^-),\tilde{z})\right)} = \cos{\left(\theta_{\dot{\sigma}(\tau^+)}^{F}(\dot{\sigma}(\tau^+),\tilde{z})\right)}, \quad \forall \tilde{z} \in T_{\sigma(\tau)}\tilde{\eta},
\end{equation}
where $ F $ is any Minkowski norm on $ T_{\sigma(\tau)}N $ conformal to $ F_1^\tau, F_2^\tau $.
\end{rem}

\begin{exe}[Classical laws]
\label{ex:classical_laws}
In $ Q = \mathds{R}^{n+1} $, let $ \varphi = (t=x^0,x^1,\ldots,x^n) $ be the natural Euclidean coordinates and assume that $ Q_1, Q_2 $ represent two different media where the propagation of light (or any other wave) is isotropic. Namely, the speed of light in $ Q_{\mu} $ is given by $ c/n_{\mu} $, where $ c $ is the speed of light in vacuum and $ n_{\mu}(t,x) $ is the refractive index of $ Q_{\mu} $, which may vary in space and time---but is independent of the direction. At each $ (t,x) \in Q_{\mu} $, the (spatial) velocity vectors of the wave are then given by the unit vectors of the time-dependent Finsler (in fact, Riemannian) metric
\begin{equation*}
F^t_{\mu} = \frac{n_{\mu}(t,x)}{c} ||\cdot||, \quad \mu = 1,2,
\end{equation*}
where $ ||\cdot|| $ denotes the natural Euclidean norm on $ \mathds{R}^n $. Using the notation introduced in \S~\ref{sec:existence}, let $ \gamma_u = (t,\sigma_u) $, $ \gamma_v = (t,\sigma_v) $ and $ \gamma_w = (t,\sigma_w) $ be an incident, refracted and reflected ($ t $-parametrized) trajectory, respectively, with its corresponding incident, refracted and reflected direction $ u = (1,\tilde{u}) $, $ v = (1,\tilde{v}) $ and $ w = (1,\tilde{w}) $ at some point $ p = (\tau,x) \in \eta = \mathds{R} \times \tilde{\eta} $. Since $ \sigma_r $ must be $ F^t_{\mu} $ unitary ($ \mu = 1 $ when $ r = u,w $; $ \mu = 2 $ when $ r = v $), we have the additional conditions
\begin{equation}
\label{eq:norms}
n_1(p)||\tilde{u}|| = n_2(p)||\tilde{v}||, \qquad ||\tilde{u}|| = ||\tilde{w}||.
\end{equation}
Note that $ F_2^t = \frac{n_2}{n_1} F_1^t $ and both Finsler metrics are conformal to the Euclidean norm, so Snell's law \eqref{eq:snell_conf} and the law of reflection \eqref{eq:reflection_conf} become, respectively,
\begin{equation}
\label{eq:laws_classic}
\begin{split}
& n_1(p) \cos{\left(\theta(\tilde{u},\tilde{z})\right)} = n_2(p) \cos{\left(\theta(\tilde{v},\tilde{z})\right)},\quad \forall \tilde{z} \in T_x\tilde{\eta},\\
& \cos{\left(\theta(\tilde{u},\tilde{z})\right)} = \cos{\left(\theta(\tilde{w},\tilde{z})\right)},\quad \forall \tilde{z} \in T_x\tilde{\eta},
\end{split}
\end{equation}
where $ \theta(\cdot,\cdot) $ refers to the usual Euclidean angle between two vectors. 
Now, let $ \{\tilde{z}_1,\ldots,\tilde{z}_n\} $ be a Euclidean orthonormal basis of $ T_xN $ such that $ \{\tilde{z}_i\}_{i=2}^n $ is a basis of $ T_x\tilde{\eta} $. If we write $ \tilde{r} \equiv (\tilde{r}^1,\ldots,\tilde{r}^n) $ in this basis, for $ \tilde{r} = \tilde{u},\tilde{v},\tilde{w} $, \eqref{eq:norms} and \eqref{eq:laws_classic} yield the following relations:
\begin{equation*}
\begin{split}
\tilde{v}^i & = \langle \tilde{v},\tilde{z}_i \rangle = ||\tilde{v}|| \cos{(\theta(\tilde{v},\tilde{z}_i))} = \frac{n_2(p)^2}{n_1(p)^2} ||\tilde{u}|| \cos{(\theta(\tilde{u},\tilde{z}_i))} = \frac{n_2(p)^2}{n_1(p)^2} \tilde{u}^i, \quad i=2,\ldots,n,\\
\tilde{w}^i & = \langle \tilde{w},\tilde{z}_i \rangle = ||\tilde{w}|| \cos{(\theta(\tilde{w},\tilde{z}_i))} = ||\tilde{u}|| \cos{(\theta(\tilde{u},\tilde{z}_i))} = \tilde{u}^i, \quad i=2,\ldots,n,
\end{split}
\end{equation*}
where $ \langle \cdot,\cdot \rangle $ denotes the natural Euclidean metric. Therefore:
\begin{equation*}
\begin{split}
\tilde{v} & = \tilde{v}^1 \tilde{z}_1 + \frac{n_2(p)^2}{n_1(p)^2}\sum_{i=2}^n \tilde{u}^i \tilde{z}_i = \tilde{v}^1 \tilde{z}_1 + \frac{n_2(p)^2}{n_1(p)^2} (\tilde{u} - \tilde{u}^1 \tilde{z}_1) = \left( \tilde{v}^1- \frac{n_2(p)^2}{n_1(p)^2} \tilde{u}^1 \right) \tilde{z}_1 + \frac{n_2(p)^2}{n_1(p)^2} \tilde{u},\\
\tilde{w} & = \tilde{w}^1 \tilde{z}_1 + \sum_{i=2}^n \tilde{u}^i \tilde{z}_i = \tilde{w}^1 \tilde{z}_1 + \tilde{u} - \tilde{u}^1 \tilde{z}_1 = (\tilde{w}^1-\tilde{u}^1)\tilde{z}_1 + \tilde{u},
\end{split}
\end{equation*}
which means that $ \tilde{v}, \tilde{w} $ can be written as a linear combination of $ \tilde{z}_1 $ and $ \tilde{u} $, i.e. the (spatial) refracted and reflected directions $ \tilde{v}, \tilde{w} $ belong to the 2-dimensional linear subspace generated by the incident direction $ \tilde{u} $ and the Euclidean normal $ \tilde{z}_1 $ to the interface $ \tilde{\eta} $ at $ p $.\footnote{This is a feature of the isotropic case, but it is not true in general due to the fact that the measurement of the Finslerian angles depends on the direction.}

Hence, without loss of generality, we can assume that $ n = 2 $ and $ \tilde{z}_i = \partial_{x^i} $, for $ i = 1,2 $ (see Figure~\ref{fig:classical}). In this case, instead of considering angles in $ [0,\pi] $ between two (oriented) vectors, it is more convenient---following the classical formulation---to work with angles in $ [0,\frac{\pi}{2}] $ between (non-oriented) lines: we define the {\em angle of incidence}, {\em refraction} and {\em reflection} $ \theta_r \in [0,\frac{\pi}{2}] $ (associated with $ r = u,v,w $, respectively) as the Euclidean angle between $ \{\lambda \tilde{r}: \lambda \in \mathds{R}\} $ and $ \{\lambda \tilde{z}_1: \lambda \in \mathds{R}\} $ (the normal line to the interface). Note that $ \{\tilde{z}_2\} $ is now a basis of $ T_x\tilde{\eta} $ and
\begin{equation*}
\cos{(\theta(\tilde{r},\tilde{z}_2))} = \pm \sin{\theta_r}, \quad r = u,v,w.
\end{equation*}
So, in the end, Snell's law and the law of reflection \eqref{eq:laws_classic} take the classical form:
\begin{equation}
\label{eq:classical}
n_1(p)\sin{\theta_u} = n_2(p)\sin{\theta_v}, \qquad \theta_u = \theta_w,
\end{equation}
where the previous $ \pm $ cancels out because $ \sin{\theta_r} > 0 $, and $ \sin{\theta_u} = \sin{\theta_w} $ implies $ \theta_u = \theta_w $ because $ \theta_r \in [0,\frac{\pi}{2}] $. It is worth noting that, since the refraction and reflection occur at a specific point $ p \in \eta $ (fixing both the position and instant of time), and these laws operate solely at the level of the tangent space $ T_pQ $, the classical laws are recovered even when the media are inhomogeneous and time-dependent.\footnote{For this to hold, it is essential that $ \eta = \mathds{R} \times \tilde{\eta} $, i.e. the interface must remain the same over time. Otherwise, its velocity must be taken into account, which modifies the classical laws (essentially, \eqref{eq:snell_natural} and \eqref{eq:reflection_natural} no longer hold and one must instead apply the general laws \eqref{eq:snell} and \eqref{eq:reflection}).}
\end{exe}

\begin{rem}[Classical total reflection]
\label{rem:clas_total_reflection}
The simplicity of \eqref{eq:classical} facilitates, for instance, the interpretation of the total reflection phenomenon described in \S~\ref{subsec:total_reflection}. According to \eqref{eq:classical}, the angle of refraction approaches its limit $ \theta_v \rightarrow \frac{\pi}{2} $ as $ \theta_u \rightarrow \widehat{\theta} $, where $ \widehat{\theta} \coloneqq \arcsin{(n_2(p)/n_1(p))} $ is known as the {\em critical angle}. This implies that such a limit can only be reached---in the isotropic case---when $ n_1(p) \geq n_2(p) $, i.e. when the speed of light in the second medium at $ p $ is greater than in the first one or, in terms of cone structures, when $ \C^1_p $ is entirely contained in $ \C^2_p $ (otherwise, $ \widehat{\theta} $ does not exist). When this condition is met, then for angles of incidence satisfying $ \theta_u > \widehat{\theta} $, the incident trajectory undergoes total reflection (see Figure~\ref{fig:classical}). At $ \theta_u = \widehat{\theta} $, $ \gamma_v $ travels along $ \eta $, yet it can still be regarded as a refracted trajectory since, in this particular case, it is in fact a global minimum of the arrival time functional (see \cite[Proposition~5.6]{MP23}).
\end{rem}

\begin{figure}
\centering
\begin{subfigure}{0.47\textwidth}
\includegraphics[width=\textwidth]{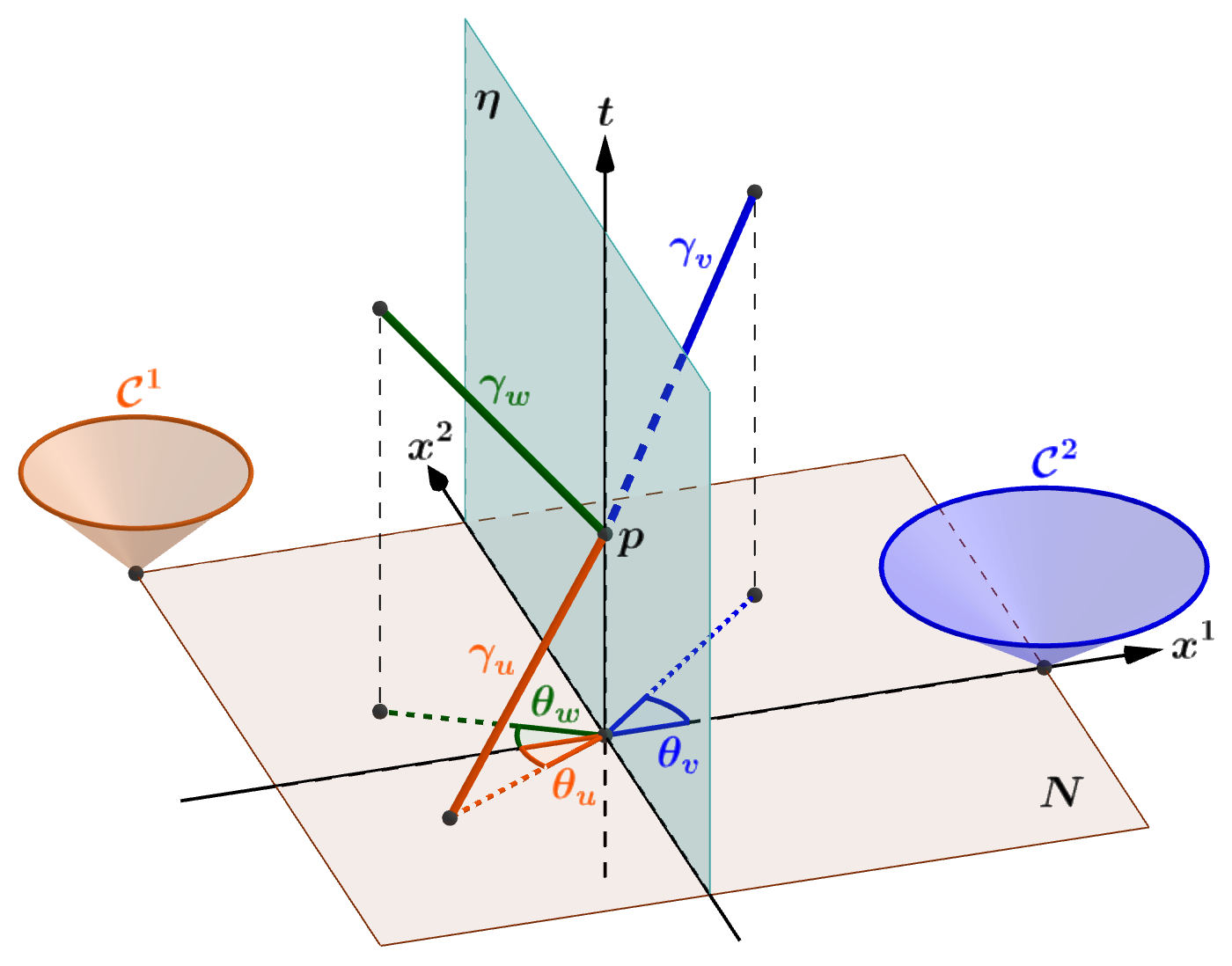}
\caption{$ \theta_u < \widehat{\theta} $.}
\end{subfigure}
\begin{subfigure}{0.37\textwidth}
\includegraphics[width=\textwidth]{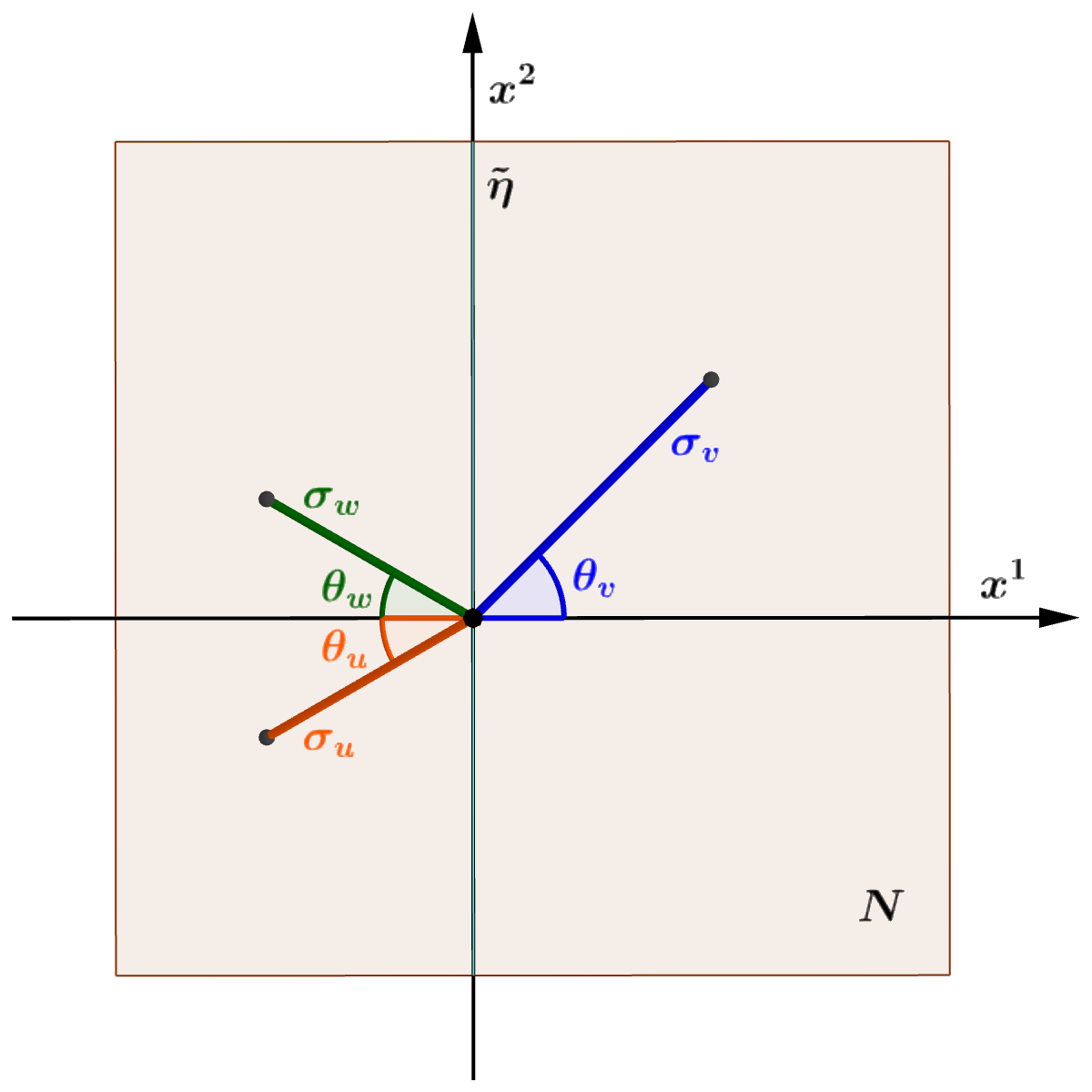}
\caption{$ \theta_u < \widehat{\theta} $. Projection on $ N $.}
\end{subfigure}
\begin{subfigure}{0.47\textwidth}
\includegraphics[width=\textwidth]{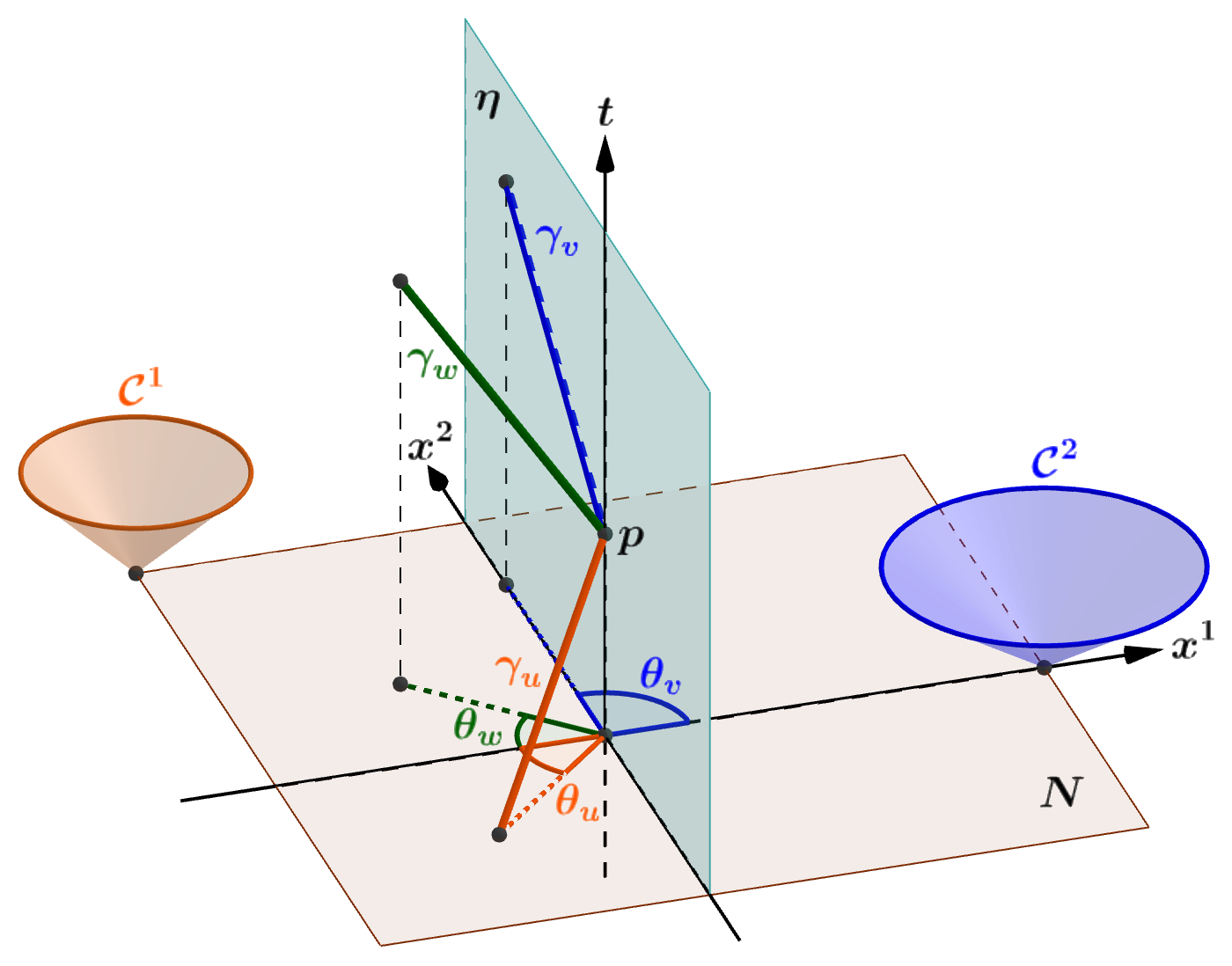}
\caption{$ \theta_u = \widehat{\theta} $.}
\end{subfigure}
\begin{subfigure}{0.37\textwidth}
\includegraphics[width=\textwidth]{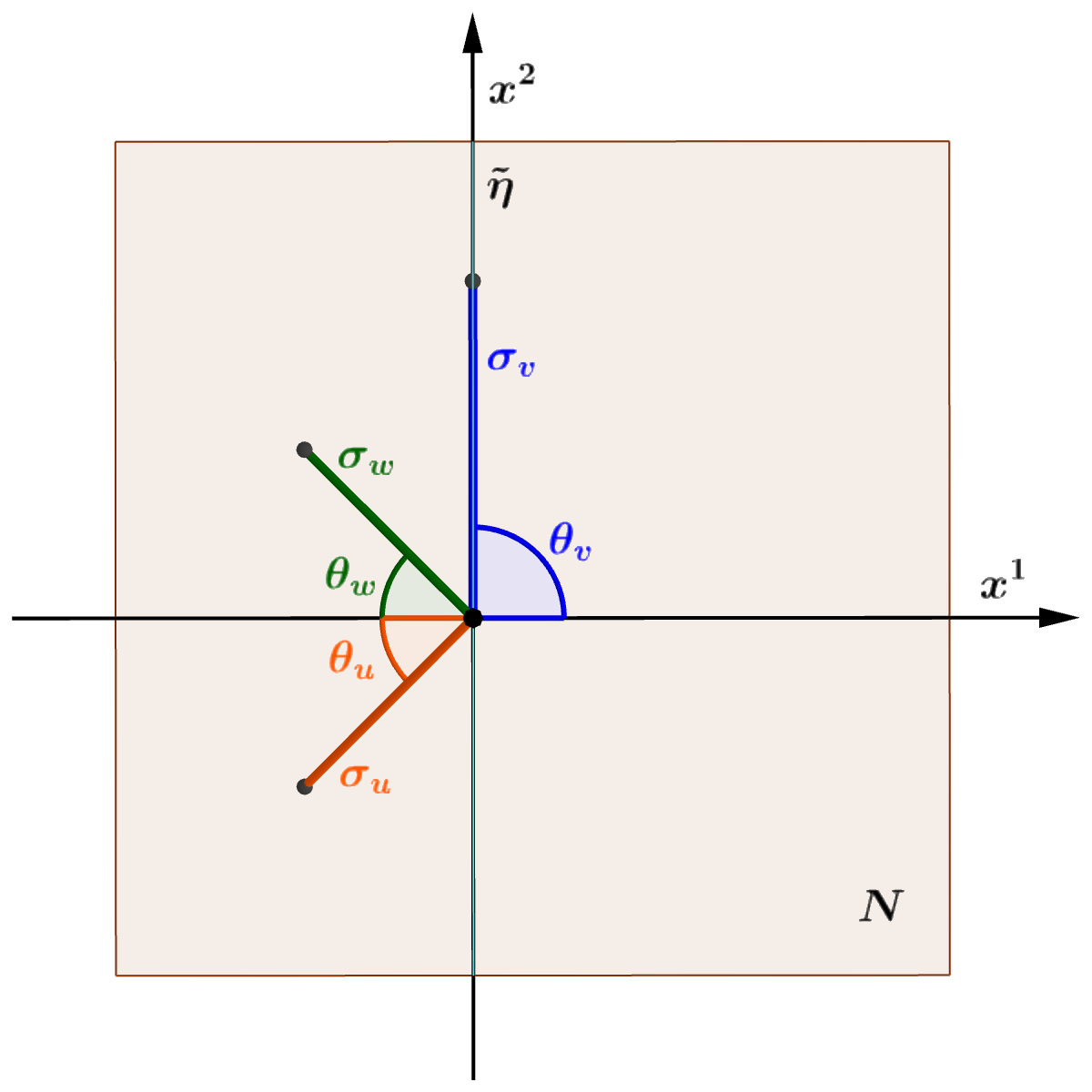}
\caption{$ \theta_u = \widehat{\theta} $. Projection on $ N $.}
\end{subfigure}
\begin{subfigure}{0.47\textwidth}
\includegraphics[width=\textwidth]{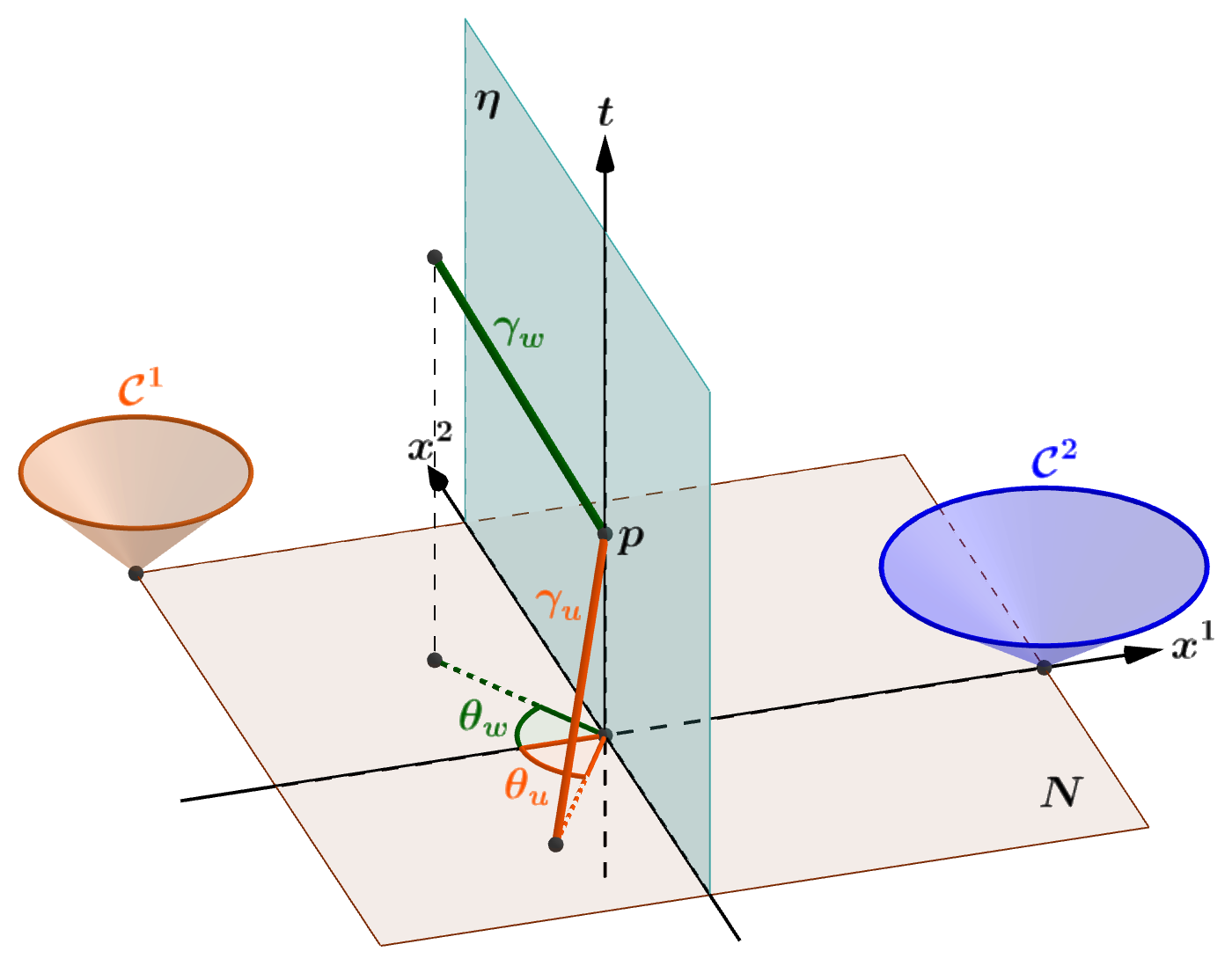}
\caption{$ \theta_u > \widehat{\theta} $.}
\end{subfigure}
\begin{subfigure}{0.37\textwidth}
\includegraphics[width=\textwidth]{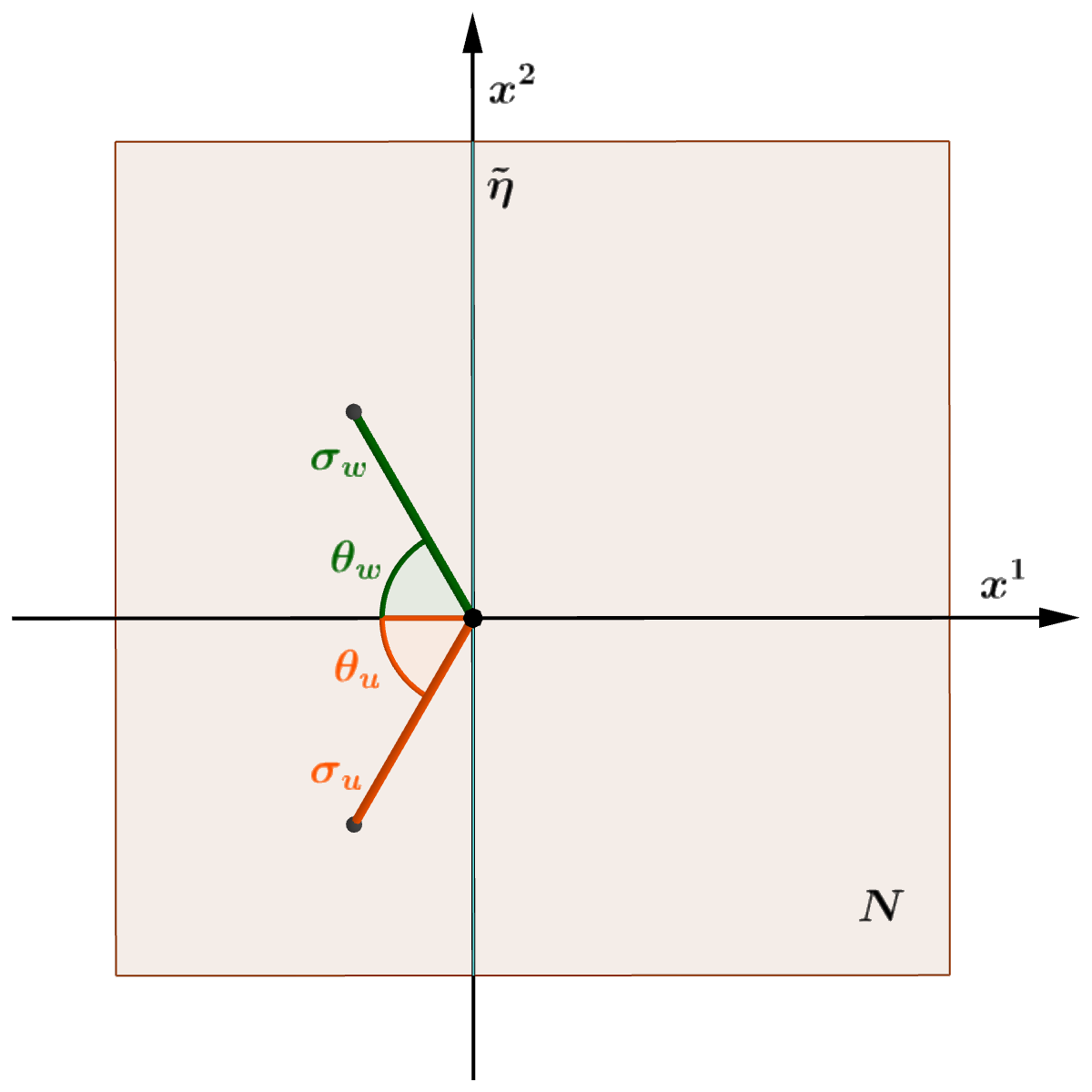}
\caption{$ \theta_u > \widehat{\theta} $. Projection on $ N $.}
\end{subfigure}
\caption{Illustration of the total reflection phenomenon in the isotropic, homogeneous and time-independent case (cone geodesics are straight lines) as $ \theta_u $ approaches and eventually exceeds the critical angle $ \widehat{\theta} $ (see Remark~\ref{rem:clas_total_reflection}). Observe that the refracted trajectory is faster than the incident and reflected ones, as $ n_1 > n_2 $ ($ \C^2 $ is wider than $ \C^1 $). In this case, the critical angle must appear because, as $ \theta_u $ increases, the line $ \Pi_u^1 \coloneqq u^{\bot_{\C^1}} \cap T_p\eta $ becomes tangent to $ \C^2_p $ (when $ v \in \Pi^1_u $ and $ \theta_u = \widehat{\theta} $) before it becomes tangent to $ \C^1_p $ (when $ u \in T_p\eta $). Between these two events, there is total reflection.}
\label{fig:classical}
\end{figure}

\subsection{Discretized spacetimes}
\label{sec:discretization}
As a final note on the applications, we briefly pose the following connection with the study of discretized spacetimes, to be explored in detail in a future work. Suppose we take a smooth cone structure $ \C $ on a smooth manifold $ Q $ and discretize it (in both space and time),\footnote{Formally, the discretization of a standard spacetime (with a specific Lorentzian metric) can be split into the discretization of the conformal structure and the conformal factor. The latter, however, is not relevant here.} i.e. we consider a grid covering $ Q $ where $ \C $ is assumed to be constant within each cell, and the cell boundaries are treated as interfaces (see also the physical setting in the review \cite{sanchez2026}). Cone geodesics in this discretized spacetime can be computed as straight lightlike curves within the cells, applying Snell's law of refraction \eqref{eq:snell} each time they cross an interface. It is expected that these discretized cone geodesics converge to the original smooth ones as the grid becomes finer, thereby recovering the causality of the original spacetime. For the computation of objects such as event horizons, only Snell cone geodesics should be considered, whose determination---particularly in the case of the double-refraction phenomenon---is described in \S~\ref{sec:cone_geodesics}. In future work, our procedure will be compared with more standard approaches in numerical relativity, such as ray tracing \cite{LehnerRT} (which is, in principle, less efficient), as well as with methods addressing more open problems, such as neutrino propagation \cite{LehnerN}.

As we have seen throughout this work, however, determining refracted trajectories via Snell's law is not straightforward. In particular, most of the challenges and difficulties we have encountered arise when the interface is $ \C^2 $-lightlike (or $ \C^1 $-lightlike in the case of reflection): in such cases, Snell's law may take the form of a strict inclusion rather than an equality (recall Remark~\ref{rem:directions}(5)). Nevertheless, this is not a practical concern here, since these cases are non-generic and the grid can always be slightly perturbed to ensure that they never occur.\footnote{The same solution can also be applied if the cone geodesic encounters the (non-smooth) corner of a cell.} With this adjustment, the grid induces only discontinuous jumps of the following type:
\begin{itemize}
\item Discontinuity along space ($ T_p\eta $ is $ \C^2 $-timelike): there is no ambiguity in the determination of the refracted trajectory (see Table~\ref{tab:refraction}).
\item Discontinuity along time ($ T_p\eta $ is $ \C^2 $-spacelike): the doble refraction phenomenon appears (see \S~\ref{subsec:double_refraction}), but there is no ambiguity in the determination of the locally horismotic trajectory (Snell cone geodesic).
\end{itemize}
For reflected trajectories, the picture is even simpler (see Table~\ref{tab:reflection}), since a $ \C^1 $-spacelike interface does not allow for reflected directions.

\appendix

\section{Timelike submanifold as receiver}
\label{app:receiver}
To our knowledge, there is only one (previously considered) generalization of the classical Fermat's principle that is not directly encompassed by the general framework presented in this work. This corresponds to the case in which the receiver $ B \subset Q_2 $ is a $ \C^2 $-timelike submanifold (rather than a curve $ \alpha $) endowed with a temporal function $ t_B $, as studied in \cite{PP98} within the Lorentzian setting. In this context, $ B $ represents a family of synchronized observers---e.g., moving along the integral curves of the gradient of $t_B$, which is conformally invariant in the Lorentzian case---distributed over a submanifold. To incorporate this case into our cone structure framework---i.e., keeping $ Q $, $ Q_{\mu} $, $ \C^{\mu} $, $ L_{\mu} $ ($ \mu = 1, 2 $) and $ \eta $ as defined at the beginning of \S~\ref{sec:setting}---we must first adapt the set of trial curves and the arrival time functional.

\begin{defi}
Let $ P $ be a submanifold of $ Q_1 $, $ B $ a $ \C^2 $-timelike submanifold of $ Q_2 $ and $ t_B: B \rightarrow \mathds{R} $ a temporal function on $ B $, with $ 0 \leq \textup{dim}(P) \leq n $ and $ 1 \leq \textup{dim}(B) \leq n $. Fixing an interval $ [a,b] $ and some $ \tau \in (a,b) $, let $ \mathcal{N}_{P,B} $ be the set of all (regular) piecewise smooth curves $ \gamma: [a,b] \rightarrow Q $ such that:
\begin{itemize}
\item $ \gamma(a) \in P $, $ \gamma(\tau) \in \eta $ and $ \gamma(b) \in B $.
\item $ \gamma $ is topologically transverse to $ \eta $ and lightlike.
\end{itemize}
We define the {\em arrival time functional} $ \T $ as
\begin{equation*}
\begin{array}{crcl}
\T \colon & \mathcal{N}_{P,B} & \longrightarrow & I \\
& \gamma & \longmapsto & \T[\gamma] \coloneqq t_B(\gamma(b)).
\end{array}
\end{equation*}
\end{defi}

Then, we can define an admissible variation $ \Lambda $ of $ \gamma \in \mathcal{N}_{P,B} $ and the associated variational vector field $ Z \in \mathfrak{X}(\gamma) $ as in Definition~\ref{def:variation}, simply replacing $ \mathcal{N}_{P,\alpha} $ with $ \mathcal{N}_{P,B} $ (i.e., we require that $ \Lambda(\omega,\cdot) \coloneqq \gamma_{\omega} \in \mathcal{N}_{P,B} $). With these new elements in place, Proposition~\ref{prop:variational_field}, Lemma~\ref{lem:Z(t_0)} and Theorem~\ref{thm:snell} can be reformulated accordingly as follows.

\begin{prop}
\label{prop:app}
Let $ \gamma \in \mathcal{N}_{P,B} $ and $ Z \in \mathfrak{X}(\gamma) $:
\begin{itemize}
\item[(I)] If $ Z $ is an admissible variational vector field, then $ Z(a) \in T_{\gamma(a)}P $, $ Z(\tau) \in T_{\gamma(\tau)}\eta $, \eqref{eq:lemma} holds and $ Z(b) \in T_{\gamma(b)}B $.
\item[(II)] The converse of (I) holds when $ \gamma $ is transverse to $ \eta $.
\end{itemize}
\end{prop}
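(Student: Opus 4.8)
The plan is to mirror the proof of Proposition~\ref{prop:variational_field} almost verbatim, modifying only how the arrival endpoint $\gamma(b)$ is handled, since the receiver $B$ now replaces the curve $\alpha$. For the necessity direction I would argue exactly as in the first part of that proposition: if $Z$ is the variational field of an admissible variation $\gamma_\omega$, then $\gamma_\omega(a)\in P$, $\gamma_\omega(\tau)\in\eta$ and $\gamma_\omega(b)\in B$ directly yield $Z(a)\in T_{\gamma(a)}P$, $Z(\tau)\in T_{\gamma(\tau)}\eta$ and $Z(b)\in T_{\gamma(b)}B$, while differentiating the lightlike condition $L_\mu(\dot\gamma_\omega)=0$ in $\omega$ (using \eqref{eq:fund_tensor_1}) reproduces \eqref{eq:lemma}. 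This half needs no new idea.

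For the sufficiency I would reuse the same chart collection and the same chart-by-chart lifting of projected variations to lightlike curves, changing only the final chart $\Omega_l$ around $\gamma(b)$. The essential new observation---and the reason the non-orthogonality hypothesis of Proposition~\ref{prop:variational_field} can now be dropped from the statement---is that, since $B$ is $\C^2$-timelike, $T_{\gamma(b)}B$ contains a $\C^2$-timelike vector $\xi$; because $\dot\gamma(b)$ is $\C^2$-lightlike, $\xi$ is never $\C^2$-orthogonal to $\dot\gamma(b)$, as the hyperplane $\dot\gamma(b)^{\perp_{\C^2}}$ contains no timelike directions (Remark~\ref{rem:orth}(3)). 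I would therefore set $\partial_{x^0_l}:=\xi$ and complete it to adapted coordinates with $B\equiv\{x^d_l=\dots=x^n_l=0\}$, where $d=\dim B$. Then the non-orthogonality condition playing the role of item~(vi) in that proof holds automatically at $\gamma(b)$, and by openness throughout a neighborhood, so the lift to lightlike curves proceeds as before.

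It then remains to check that the constructed endpoint stays on $B$ and carries the prescribed boundary value. Projecting onto $\{x^0_l=0\}$ and varying $\tilde\gamma_l$ by $\tilde Z_l$, the hypothesis $Z(b)\in T_{\gamma(b)}B$ reads $Z^d_l(b)=\dots=Z^n_l(b)=0$; since $\tilde\gamma_l(b)$ already has vanishing $x^d_l,\dots,x^n_l$ coordinates, the affine base variation $\tilde\gamma_l(b)+\omega\tilde Z_l(b)$ keeps these coordinates identically zero, so after lifting the $x^0_l$-coordinate as a lightlike curve the point $\gamma_\omega(b)$ remains in $B=\{x^d_l=\dots=x^n_l=0\}$ for every $\omega$, with variational value lying in $\textup{Span}\{\partial_{x^0_l},\dots,\partial_{x^{d-1}_l}\}=T_{\gamma(b)}B$. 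The verification that the lifted variational field coincides with $Z$ is then the identical computation carried out around \eqref{eq:variational_field}.

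I expect the main obstacle to be bookkeeping rather than conceptual: in the final chart one must ensure simultaneously that the variation consists of lightlike curves, that it terminates on the submanifold $B$, and that it reproduces the prescribed $Z(b)$, all while gluing smoothly to the penultimate chart through the partition-of-unity argument. The single genuinely new ingredient is that the timelikeness of $B$ supplies, for free, a tangent direction transverse to the cone's tangent hyperplane---precisely what the non-orthogonality hypothesis had to supply by hand in the curve case---so no extra assumption on $\gamma$ is required in the statement.
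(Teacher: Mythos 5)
Your proposal is correct and follows essentially the same route as the paper: reduce both directions to the proof of Proposition~\ref{prop:variational_field}, changing only the last chart so that $B$ is a coordinate slice with $\partial_{x^0_l}$ tangent to $B$, whence the base variation (and hence its lightlike lift) stays on $B$. You additionally make explicit the point the paper leaves implicit, namely that the $\C^2$-timelikeness of $B$ supplies a tangent direction not $\C^2$-orthogonal to $\dot\gamma(b)$, which is exactly why the non-orthogonality hypothesis on the receiver can be dropped.
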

\begin{proof}
If $ Z \in \mathfrak{X}(\gamma) $ is the variational vector field associated with an admissible variation $ \Lambda(\omega,t) = \gamma_{\omega}(t) $, then $ \gamma_{\omega}(b) \in B $ for all $ \omega \in (-\varepsilon,\varepsilon) $, which directly implies that $ Z(b) \in T_{\gamma(b)}B $. The remaining properties follow exactly as in the proof of Proposition~\ref{prop:variational_field}(I).

To prove (II), we can also reproduce the proof of Proposition~\ref{prop:variational_field}(II), with the only modification that, now, we choose the coordinate system $ \varphi_l $ so that $ B \equiv \{x^1_l=\ldots=x^{s}_l=0\} $ around $ \gamma(b) $ (hence, $ \partial_{x^0_l} $ is tangent to $ B $ on $ \Omega_l $), where $ s \coloneqq \textup{codim}(B) $. This way, $ \gamma^1_l(b)=\ldots=\gamma^s_l(b)=0 $ and $ Z^1_l(b)=\ldots=Z^s_l(b)=0 $ by hypothesis, so the lightlike curves of the final constructed variation $ \Lambda(\omega,t) = \gamma_{\omega}(t) $ satisfy $ \gamma_{\omega}^1(b)=\ldots=\gamma_{\omega}^s(b)=0 $, i.e. $ \gamma_{\omega}(b) \in B $. We can then conclude, following the original proof, that $ \Lambda $ is an admissible variation of $ \gamma $ whose associated variational vector field is precisely $ Z $.
\end{proof}

\begin{lemma}
\label{lem:app}
Let $ \gamma \in \mathcal{N}_{P,\alpha} $. Then, $ \gamma $ is a critical point of $ \T $ if and only if $ Z(b) \in \textup{Ker}((dt_B)_{\gamma(b)}) $ for every admissible variational vector field $ Z \in \mathfrak{X}(\gamma) $.
\end{lemma}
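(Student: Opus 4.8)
The plan is to mirror the proof of Lemma~\ref{lem:Z(t_0)}, replacing the chain-rule computation tailored to the arrival curve $ \alpha $ by the corresponding one for the temporal function $ t_B $. The essential simplification is that, since $ \T[\gamma] = t_B(\gamma(b)) $ is now the value of a genuine function on $ B $, its variation can be computed directly by pulling back $ dt_B $, without the need to invert a regular parametrization.

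First I would fix an arbitrary admissible variation $ \Lambda(\omega,\cdot) = \gamma_{\omega} $ of $ \gamma \in \mathcal{N}_{P,B} $, with associated variational vector field $ Z \in \mathfrak{X}(\gamma) $. By the (reformulated) definition of admissible variation, $ \gamma_{\omega}(b) \in B $ for every $ \omega \in (-\varepsilon,\varepsilon) $, so that $ \T[\gamma_{\omega}] = t_B(\gamma_{\omega}(b)) $ and the map $ \omega \mapsto \T[\gamma_{\omega}] $ is smooth (as both $ t_B $ and $ \omega \mapsto \gamma_{\omega}(b) $ are smooth).

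Next I would differentiate this identity at $ \omega = 0 $ by the chain rule, obtaining
\begin{equation*}
\left. \frac{d}{d\omega} \T[\gamma_{\omega}] \right\rvert_{\omega=0} = (dt_B)_{\gamma(b)}\left( \left. \frac{\partial \gamma_{\omega}(b)}{\partial \omega} \right\rvert_{\omega=0} \right) = (dt_B)_{\gamma(b)}(Z(b)),
\end{equation*}
since $ Z(b) = \frac{\partial \Lambda}{\partial \omega}(0,b) $ by definition of the variational vector field.

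Finally, recalling that $ \gamma $ is a critical point of $ \T $ precisely when $ \left. \frac{d}{d\omega} \T[\gamma_{\omega}] \right\rvert_{\omega=0} = 0 $ for every admissible variation, the displayed identity shows this is equivalent to $ (dt_B)_{\gamma(b)}(Z(b)) = 0 $ for every admissible variational vector field $ Z $, i.e. $ Z(b) \in \textup{Ker}((dt_B)_{\gamma(b)}) $. I do not anticipate any real obstacle here: unlike the curve case of Lemma~\ref{lem:Z(t_0)}, where one must invoke the regularity of $ \alpha $ to pass between $ Z(b) = 0 $ and the vanishing of the derivative, the presence of the temporal function $ t_B $ turns the argument into an immediate application of the chain rule, with the kernel of $ dt_B $ playing the role previously held by the vanishing of $ Z(b) $.
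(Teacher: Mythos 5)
Your proof is correct and coincides with the paper's own argument: both compute $\left.\frac{d}{d\omega}\T[\gamma_\omega]\right\rvert_{\omega=0} = (dt_B)_{\gamma(b)}(Z(b))$ by the chain rule and read off the equivalence directly. Nothing further is needed.
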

\begin{proof}
Let $ Z \in \mathfrak{X}(\gamma) $ be any admissible variational vector field. Then
\begin{equation*}
\left. \frac{d}{d\omega} \T[\gamma_{\omega}] \right\rvert_{\omega=0} = \left. \frac{d}{d\omega} t_B(\gamma_{\omega}(b)) \right\rvert_{\omega=0} = (dt_B)_{\gamma(b)}(Z(b)),
\end{equation*}
so we conclude that
\begin{equation*}
\left. \frac{d}{d\omega} \T[\gamma_{\omega}] \right\rvert_{\omega=0} = 0 \Leftrightarrow Z(b) \in \textup{Ker}((dt_B)_{\gamma(b)}).
\end{equation*}
\end{proof}

\begin{thm}[Fermat's principle with a timelike submanifold as receiver]
\label{thm:app}
Let $ \gamma \in \mathcal{N}_{P,B} $:
\begin{itemize}
\item[(I)] If $ \dot{\gamma}(a) \perp_{\C^1} P $,  $ \gamma|_{[a,\tau]} $ is a cone geodesic of $ \C^1 $, $ \gamma|_{[\tau,b]} $ is a cone geodesic of $ \C^2 $, Snell's law \eqref{eq:snell} holds and $ \textup{Ker}((dt_B)_{\gamma(b)}) \subset \dot{\gamma}(b)^{\perp_{\C^2}} $, then $ \gamma $ is a critical point of $ \T $.
\item[(II)] When $ \gamma $ is cone-transverse to $ \eta $, the converse of \textup{(I)} holds with equality in \eqref{eq:snell}.
\end{itemize}
\end{thm}
\begin{proof}
If $ \gamma $ satisfies the hypotheses in (I), then by following the same steps as in the proof of Theorem~\ref{thm:snell}(I), we deduce that $ Z(b) \in \dot{\gamma}(b)^{\perp_{\C^2}} $ for any admissible variational vector field $ Z $. Also, by Proposition~\ref{prop:app}(I) we know that $ Z(b) \in T_{\gamma(b)}B $. Now, observe that $ \dot{\gamma}(b)^{\perp_{\C^2}} $ is transverse to $ T_{\gamma(b)}B $ (because $ B $ is $ \C^2 $-timelike), so
\begin{equation*}
\begin{split}
\textup{dim}(\dot{\gamma}(b)^{\perp_{\C^2}} \cap T_{\gamma(b)}B) & = \textup{dim}(\dot{\gamma}(b)^{\perp_{\C^2}}) + \textup{dim}(B) - \textup{dim}(\dot{\gamma}(b)^{\perp_{\C^2}} + T_{\gamma(b)}B) = \\
& = n+\textup{dim}(B)-(n+1) = \textup{dim}(B)-1.
\end{split}
\end{equation*}
On the other hand, $ \textup{Ker}((dt_B)_{\gamma(b)}) \subset \dot{\gamma}(b)^{\perp_{\C^2}} $ by hypothesis, $ \textup{Ker}((dt_B)_{\gamma(b)}) \subset T_{\gamma(b)}B $ by definition (since $ t_B $ is only defined on $ B $) and $ \textup{dim}(\textup{Ker}((dt_B)_{\gamma(b)})) = \textup{dim}(B)-1 $ because $ t_B $ is a temporal function, so we conclude that $ \textup{Ker}((dt_B)_{\gamma(b)}) = \dot{\gamma}(b)^{\perp_{\C^2}} \cap T_{\gamma(b)}B $. Therefore, $ Z(b) \in \textup{Ker}((dt_B)_{\gamma(b)}) $ for every admissible variational vector field $ Z $, which means, by Lemma~\ref{lem:app}, that $ \gamma $ is a critical point of $ \T $.

To prove (II), if $ \gamma $ is cone-transverse to $ \eta $ and a critical point of $ \T $, fix $ U \in \mathfrak{X}(\gamma) $ such that $ U(\tau) \in T_{\gamma(\tau)}\eta $, $ U(b) \in T_{\gamma(b)}B $ is $ \C^2 $-timelike and $ \dot{\gamma} \not\perp_{L_{\mu}} U $ everywhere, and define
\begin{equation*}
\mathcal{V} \coloneqq \{ W \in \mathfrak{X}(\gamma): W(a) \in T_{\gamma(a)}P, W(\tau) \in T_{\gamma(\tau)}\eta \text{ and } W(b) \in \textup{Ker}((dt_B)_{\gamma(b)}) \}.
\end{equation*}
Given any $ W \in \mathcal{V} $, we construct $ Z_W \in \mathfrak{X}(\gamma) $ exactly as in \eqref{eq:Z_W}, which satisfies all the hypotheses in Proposition~\ref{prop:app}(II), and $ \tilde{\mathcal{V}} $ as in \eqref{tildeV}. Applying Lemma~\ref{lem:app}, we know that $ Z_W(b) \in \textup{Ker}((dt_B)_{\gamma(b)}) $ for any $ W \in \tilde{\mathcal{V}} $, which means (due to the fact that $ U(b) \notin \textup{Ker}((dt_B)_{\gamma(b)}) $ because $ U(b) $ is $ \C^2 $-timelike) that \eqref{eq:integrals} must hold. Obviously, choosing $ W $ so that $ W(b) = 0 $, we recover from the proof of Theorem~\ref{thm:snell}(II) all the required properties for $ \gamma $, except the last one. Now, applying these properties but choosing an arbitrary $ W \in \tilde{\mathcal{V}} $, from \eqref{eq:integrals} (integrating by parts) we obtain the additional condition
\begin{equation*}
g_{\dot{\gamma}(b)}^{L_2}(\dot{\gamma}(b),W(b)) = 0, \quad \forall W \in \tilde{\mathcal{V}},
\end{equation*}
and since $ \textup{Ker}((dt_B)_{\gamma(b)}) = \{W(b): W \in \tilde{\mathcal{V}} \} $, we conclude that $ \textup{Ker}((dt_B)_{\gamma(b)}) \subset \dot{\gamma}(b)^{\perp_{\C^2}} $.
\end{proof}

\begin{rem}
Some final observations regarding the comparison between Theorems~\ref{thm:snell} and \ref{thm:app} are worth noting:
\begin{itemize}
\item The new condition introduced in Theorem~\ref{thm:app} is that $ \dot{\gamma}(b) \perp_{\C^2} \textup{Ker}((dt_B)_{\gamma(b)}) $. Note that $ \textup{Ker}((dt_B)_{\gamma(b)}) = T_{\gamma(b)}(t_B^{-1}(\T[\gamma])) $, where the inverse image $ t_B^{-1}(c) $ is (either empty or) a $ \C^2 $-spacelike hypersurface of $ B $, for any $ c \in \mathds{R} $. Consequently, Theorem~\ref{thm:app} is equivalent to \cite[Theorem~1]{PP98} in the (smooth) Lorentzian setting.
\item When $ \textup{dim}(B) = 1 $, we recover Theorem~\ref{thm:snell} for a $ \C^2 $-timelike curve $ \alpha $. In particular, observe that the non-orthogonality restriction $ \dot{\gamma}(b) \not\perp_{\C^2} \dot{\alpha}(\T[\gamma]) $ does not need to be imposed, as it holds automatically.
\item Although not stated explicitly in Theorem~\ref{thm:app}, the restrictions on $ P $ (given by $ \dot{\gamma}(a) \perp_{\C^1} P $) are the same as in Theorem~\ref{thm:snell}.
\end{itemize}
\end{rem}

\section*{Acknowledgments}
The authors warmly acknowledge Professor Luis Lehner (Perimeter Institute for Theoretical Physics) for his explanations on numerical relativity, his comments on our techniques, and for pointing out the references \cite{LehnerN,LehnerRT}.

MAJ and EPR were supported in part by Ministerio de Ciencia e Innovación and Agencia Estatal de Investigación MICIN/AEI/10.13039/501100011033/ under project  PID2021-124157NB-I00, cofunded by ``ERDF A way of making Europe''; and in part by Fundación Séneca-Agencia de Ciencia y Tecnología de la Región de Murcia, under project ``Ayudas a proyectos para el desarrollo de investigación científica y técnica por grupos competitivos (Comunidad Autónoma de la Región de Murcia)'', included in ``Programa Regional de Fomento de la Investigación Científica y Técnica (Plan de Actuación 2022)'', REF. 21899/PI/22. 

EPR and MS were supported in part by Ministerio de Ciencia e Innovación and Agencia Estatal de Investigación MICIN/AEI/10.13039/501100011033/ under projects PID2020-116126GB-I00 and PID2024-156031NB-I00, respectively; and in part by the framework IMAG-Mar\'{i}a de Maeztu grant CEX2020-001105-M.

\end{document}